\documentclass{article}

\usepackage[letterpaper,top=2cm,bottom=2cm,left=3cm,right=3cm,marginparwidth=1.75cm]{geometry}
\usepackage{amsmath}
\usepackage{amsthm}
\usepackage{amsfonts}
\usepackage{mathtools}
\usepackage{graphicx}
\usepackage{amssymb}
\usepackage[colorlinks=true, allcolors=blue]{hyperref}
\usepackage[T1]{fontenc}
\usepackage{lmodern}
\usepackage{algorithm}
\usepackage[noend]{algpseudocode}
\usepackage{xcolor}
\usepackage{adjustbox}
\usepackage{todonotes}
\usepackage{tikz}
\usepackage{cleveref}
\usepackage{enumerate}
\usepackage{booktabs}
\usepackage{longtable}
\usepackage{enumitem}
\usepackage{xspace}
\usepackage{pgfplots}
\usepgfplotslibrary{fillbetween}
\usepackage{aliascnt}
\usepackage{subcaption}
\usetikzlibrary{patterns, arrows.meta, positioning, decorations.pathreplacing, shapes.geometric,patterns.meta,shapes.misc}

\newcommand{\R}{\mathbb{R}}
\newcommand{\conv}{\mathrm{conv}}

\newcommand{\polygon}{P}

\newcommand{\staircasepolygon}{ortho-polygon\xspace}

\newcommand{\orth}{\polygon_{\perp}}

\DeclareMathOperator*{\argmin}{arg\,min}

\newtheorem{theorem}{Theorem}[section]
\newaliascnt{lemma}{theorem}
\newtheorem{lemma}[lemma]{Lemma}
\aliascntresetthe{lemma}
\newaliascnt{corollary}{theorem}

\aliascntresetthe{corollary}

\newaliascnt{proposition}{theorem}
\newtheorem{proposition}[proposition]{Proposition}
\aliascntresetthe{proposition}

\newaliascnt{remark}{theorem}
\newtheorem{remark}[remark]{Remark}
\aliascntresetthe{remark}

\newtheorem{definition}[theorem]{Definition}
\newaliascnt{assumption}{theorem}
\newtheorem{assumption}[assumption]{Assumption}
\aliascntresetthe{assumption}

\newaliascnt{observation}{theorem}

\aliascntresetthe{observation}

\crefname{assumption}{assumption}{assumptions}
\Crefname{assumption}{Assumption}{Assumptions}
\crefname{remark}{remark}{remarks}
\Crefname{remark}{Remark}{Remarks}
\crefname{corollary}{corollary}{corollaries}
\Crefname{corollary}{Corollary}{Corollaries}
\crefname{observation}{observation}{observations}
\Crefname{observation}{Observation}{Observations}
\crefname{proposition}{proposition}{propositions}
\Crefname{proposition}{Proposition}{Propositions}
\crefname{lemma}{lemma}{lemmas}
\Crefname{lemma}{Lemma}{Lemmas}

\definecolor{color0}{RGB}{205, 145, 50}
\definecolor{color1}{RGB}{25, 42, 86}
\definecolor{color2}{RGB}{156, 56, 72}
\definecolor{color3}{RGB}{0, 119, 139}
\definecolor{color4}{RGB}{140, 80, 45}   % warm sienna
\definecolor{color5}{RGB}{80, 55, 110}    % deep plum
\definecolor{color6}{RGB}{45, 95, 72}     % forest green
\definecolor{color7}{RGB}{185, 95, 40}    % burnt orange
\definecolor{color8}{RGB}{65, 75, 95}     % slate blue-gray
\definecolor{color9}{RGB}{160, 95, 115}   % dusty mauve
\definecolor{color10}{RGB}{58, 110, 82}   % muted emerald
\definecolor{color11}{RGB}{120, 48, 48}   % dark brick red

\newcommand{\stairshort}{OL}
\newcommand{\locatellishort}{L}
\newcommand{\indicatorshort}{IL}
\newcommand{\baseshort}{BASE}

\newcommand{\nwvertex}{north-west vertex}
\newcommand{\nevertex}{north-east vertex}
\newcommand{\swvertex}{south-west vertex}
\newcommand{\severtex}{south-east vertex}
\newcommand{\nwvertices}{north-west vertices}
\newcommand{\nevertices}{north-east vertices}
\newcommand{\swvertices}{south-west vertices}
\newcommand{\severtices}{south-east vertices}

\usepackage[backend=biber,
citestyle=alphabetic, 
bibstyle=alphabetic, 
hyperref=true, 
maxbibnames=20, 
maxcitenames=6, 
uniquelist=false,
url=false, 
doi=true, 
firstinits=true]{biblatex}
\DeclareNameAlias{author}{last-first}

\bibliography{references}
\title{On convex envelopes of bilinear terms over polygonal domains}
\author{
Adrian Göß\textsuperscript{1}\quad
Jan Krause\textsuperscript{1}\quad
Martina Kuchlbauer\textsuperscript{1, 2}\quad
Tobias Kuen\textsuperscript{2}\\[0.8em]
\small\textsuperscript{1}University of Technology Nuremberg\\
\small\texttt{\{adrian.goess, jan.krause, martina.kuchlbauer\}@utn.de}\\[0.5em]
\small\textsuperscript{2}Fraunhofer Institute for Integrated Circuits\\
\small\texttt{\{martina.kuchlbauer, tobias.kuen\}@iis.fraunhofer.de}
}
\date{}

\begin{document}
	
\maketitle

\begin{abstract}

\noindent We develop an algorithm for computing the convex envelope of the bilinear
function $f(x,y)=xy$ over possibly nonconvex polygonal domains. Building on
existing methods for convex polytopes, our approach uses the
Karush--Kuhn--Tucker conditions and an analysis of the contact points
between affine underestimators and the graph of $f$. Nonconvex domains
admit additional contact configurations that prevent several
simplifications available in the convex case. We resolve these
configurations explicitly and show that the envelope admits a finite
polyhedral subdivision with affine, quadratic, or rational expressions
on its cells. For orthogonal polygons, whose edges are horizontal or
vertical, we establish substantial reductions in the generator sets
needed to compute the envelope. We also present methods for constructing
such domains as relaxations of two-dimensional projections of nonlinear
optimization problems. An illustrative example quantifies the loss in
relaxation strength caused by convexifying the domain before computing
the envelope. Computational experiments on MINLPLib instances show that
cuts derived from orthogonal polygons strengthen root relaxations and
reduce branch-and-bound nodes in aggregate.

\par\medskip
\noindent\begin{tabular}{@{}l@{\ }l@{}}
\textbf{Keywords:} & Convex envelopes \textperiodcentered\ Bilinear terms
\textperiodcentered\ Nonconvex polygons \textperiodcentered\ Global optimization\\
& KKT conditions \textperiodcentered\ Polyhedral subdivisions
\end{tabular}
\end{abstract}

\section{Introduction}
\label{sec:intro}

Spatial branch-and-bound is a widely used framework for the deterministic
global optimization of nonconvex mixed-integer nonlinear programs; see, e.g., \cite{Burer2012}. Its computational efficiency depends strongly on the strength of the
convex relaxations employed. A nonconvex constraint $g(\mathbf{x})\leq0$ can be
relaxed by replacing $g$ with a convex underestimator $\tilde{g}$,
yielding the convex constraint $\tilde{g}(\mathbf{x})\leq0$. The tightest such underestimator over a given domain is the convex envelope. Although computing convex envelopes is difficult in general, explicit characterizations are
available for particular classes of functions and domains. 
Of particular interest are bilinear terms $xy$, which arise in numerous
applications, including those of mixed-integer bilinear programming
\cite{Fischetti2020}. A prominent example is the pooling problem,
which appears in petroleum refining, wastewater treatment, and other
process industries \cite{misener2009advances}. Bilinear terms also
serve as building blocks in reformulations of more general nonlinear
expressions.

The convex envelope of $xy$ over a rectangle has been known for decades
\cite{McCormick1976}, and the resulting McCormick relaxation is a
standard component of modern global optimization solvers. 
Convex envelopes over more general domains have been studied in
subsequent work; an overview of relevant contributions is provided in
\autocite[Section~1]{locatelli2016_polyhedral}. Our work builds directly
on \autocite{locatelli2016_polyhedral,locatelli2018convex}, which consider a
broader class of bivariate functions over general polytopes, with
particular attention to the bilinear term $xy$.
In \autocite{locatelli2016_polyhedral}, a constructive enumeration
procedure establishes that the convex envelope is characterized by a
unique polyhedral subdivision of the domain, with an affine, quadratic,
or rational expression on each cell. The number of subproblems, however,
grows exponentially with the number of positive-slope edges.
In \autocite{locatelli2018convex}, Locatelli exploits the associated KKT
system to obtain a polynomial-time procedure.

These approaches assume polytopes as domains. We investigate the extension to
possibly nonconvex polygons. Many of the underlying ideas in
\autocite{locatelli2016_polyhedral,locatelli2018convex} carry over without
requiring convexity of the domain. However, a detailed analysis of the contact points between affine underestimators and the graph of $xy$ reveals substantial differences between convex and nonconvex domains: several simplifications of the KKT-based approach no longer apply. By explicitly
resolving the additional contact point configurations, we obtain an extended
KKT-based algorithm for computing the convex envelope of $xy$ over
polygonal domains. We also extend the polyhedral subdivision result and
its associated functional forms to this setting. An implementation of the
algorithm will be made publicly available.
Obtaining useful polygonal domains for individual bilinear terms in
general high-dimensional nonlinear programs is itself a challenge.
We therefore consider the special case of orthogonal polygons, whose
edges are horizontal or vertical. For this class, we substantially
simplify the computation of the convex envelope and present practical
methods for constructing such polygons as relaxations of the feasible
domain of $(x,y)$ within a larger optimization problem.

Computational experiments on MINLPLib instances assess the impact of
cuts derived from orthogonal polygons. Compared with the default solver setting and with cuts based on convexified domains, these cuts strengthen
root relaxations and reduce the number of branch-and-bound nodes in
aggregate, while solving times remain comparable. The results illustrate
the practical potential of exploiting nonconvex domain geometry.

The paper is organized as follows.
\Cref{sec:convenv} introduces the problem and preliminaries.
\Cref{sec:comp-conv} develops the KKT-based algorithm and establishes the
polyhedral subdivision result.
\Cref{sec:orthogonal_polygons} presents reductions for orthogonal polygons
and their use in nonlinear optimization.
\Cref{sec:computational_study} reports the computational experiments,
followed by concluding remarks.

\section{Problem description and preliminaries}
\label{sec:convenv}

In this section, we introduce the polygonal domains and notation, define the convex
envelope, and recall its reformulation as a continuously differentiable
convex program.

\begin{definition}[Simple polygon]\label{def:polygon}
Let $(v_1,\dots,v_n)$, $n\geq3$, be pairwise distinct points in
$\mathbb{R}^2$ defining a closed polygonal chain with edges
$[v_i,v_{i+1}]$ for $i=1,\dots,n$, where $v_{n+1}\coloneqq v_1$.
Suppose that nonconsecutive edges are disjoint, while consecutive
edges are noncollinear and intersect exactly at their common endpoint.
A \textit{simple polygon} $\polygon$ is the union of this chain and
its bounded interior. We denote its sets of vertices and edges by
$V(\polygon)$ and $E(\polygon)$, respectively.
\end{definition}
\noindent Note that the enclosed region is well-defined by the Jordan curve theorem.
Throughout this paper, the term \textit{polygon} refers to a simple polygon. Now, the central object of interest can be defined:

\begin{definition}[Convex envelope]\label{def2:convex_envelope}
Let $\polygon\subseteq\mathbb{R}^2$ be a polygon and let
$f:\polygon\to\mathbb{R}$ be lower semicontinuous.
The \emph{convex envelope} of $f$ over $\polygon$ is for all $\mathbf{x}\in\operatorname{conv}(\polygon)$ defined by
\[
\conv_{f,\polygon}(\mathbf{x})
\coloneqq
\sup\left\{
g(\mathbf{x}) :
g:\operatorname{conv}(\polygon)\to\mathbb{R}
\text{ is convex and }
g(\mathbf{y})\le f(\mathbf{y})
\text{ for all }\mathbf{y}\in\polygon
\right\}.
\]
\end{definition}
\noindent It is directly implied by this definition that $\conv_{f,\polygon}$
is the tightest convex underestimator of $f$ over $\polygon$. The approach followed in this paper relies on an alternative representation of the convex envelope, stated in the following remark.

\begin{remark}\label{def:convex_envelope}
Under the assumptions of \Cref{def2:convex_envelope}, the convex
envelope admits the following representation:
\begin{equation}\label{eq:def_conv}
	\conv_{f,\polygon}(\mathbf{x}) = \sup_{\boldsymbol{\alpha}\in\R^2, c\in\R} \{\boldsymbol{\alpha}^\top \mathbf{x} + c: \boldsymbol{\alpha}^\top \mathbf{y} + c\leq f(\mathbf{y}) \text{ for all } \mathbf{y}\in \polygon\}.
\end{equation}
This representation follows from
\autocite[Theorem~2.2]{Falk1969}, which identifies the convex envelope
with the biconjugate of $f$, itself the pointwise supremum of all
affine minorants by the definition of conjugation.
\end{remark}

\noindent As a preliminary step towards developing our algorithm, we recall the reformulation of \eqref{eq:def_conv} as a continuously
differentiable convex program, presented in
\autocite[Section~2]{locatelli2016_polyhedral} and
\autocite[Section~2]{locatelli2018convex} which extends to nonconvex polygons.
We use the simpler version of the assumptions introduced
in the aforementioned works:

\begin{assumption}\label{assumption}
	Let $f\in \mathcal{C}^2$ be a bivariate function and $\polygon \subseteq \mathbb{R}^2$ be a polygon such that 
	\begin{enumerate}[label=(\roman*)]
		\item the Hessian of $f$ is indefinite in the interior of $\polygon$, 
		\item the restriction of $f$ along each edge of $\polygon$ is either concave or strictly convex.
	\end{enumerate}
\end{assumption}

\noindent The affine underestimator condition in \eqref{eq:def_conv} is equivalent to
\begin{equation}\label{eq:first_reformulation}
\min_{\mathbf{y}\in\polygon}
\bigl(f(\mathbf{y})-\boldsymbol{\alpha}^{\top}\mathbf{y}\bigr)\geq c.
\end{equation}
Under \Cref{assumption}, this minimum is attained on the boundary;
on concave edges, it suffices to consider the endpoints.
Thus, only vertices and the edges $E'(\polygon)$ along which $f$
is strictly convex are needed. This argument does not require
convexity of $\polygon$. For $e\in E'(\polygon)$ with supporting line $y=m_e x+q_e$,
let $x_e^1<x_e^2$ be its endpoint $x$-coordinates and set
$g_e(x)\coloneqq f(x,m_e x+q_e)$ and $\boldsymbol{\alpha}\coloneqq(a,b)$. (Vertical edges are treated analogously by interchanging the coordinates.)
The edge minimum becomes
\[
\min_{x\in[x_e^1,x_e^2]}\bigl(g_e(x)-(a+m_e b)x\bigr)-bq_e.
\]

\noindent Strict convexity gives a unique minimizer $x_e(a,b)$.
Define the domains
\begin{align*}
D_e^-&\coloneqq\{(a,b):a+m_e b\leq g_e'(x_e^1)\},\\
D_e^+&\coloneqq\{(a,b):a+m_e b\geq g_e'(x_e^2)\},\\
D_e&\coloneqq\{(a,b):g_e'(x_e^1)\leq a+m_e b\leq g_e'(x_e^2)\}.
\end{align*}
\noindent Then
\begin{equation}\label{eq:def_x_j}
x_e(a,b) =
	\begin{cases}
		x_e^1, & \text{for }  (a, b) \in D_e^-, \\
		x_e^2, & \text{for }  (a, b) \in D_e^+,\\
		s_e(a,b), & \text{for } (a, b) \in D_e,
	\end{cases}
\end{equation}
where $s_e(a,b)$ is the unique solution in $[x_e^1,x_e^2]$
of $g_e'(s_e(a,b))=a+m_e b$ for $(a,b)\in D_e$.

\noindent For edges $e \in E'(\polygon)$ and vertices $v = (v_x, v_y) \in V(\polygon)$, respectively, set
\[
\mathbf{x}_e(\boldsymbol{\alpha})\coloneqq
\bigl(x_e(a,b),y_e(a,b)\bigr),\qquad
y_e(a,b)\coloneqq m_e x_e(a,b)+q_e,
\]
\[
\mathbf{x}_v(\boldsymbol{\alpha})\coloneqq \bigl(x_v(a,b),y_v(a,b)\bigr),\qquad
x_v(a,b)\coloneqq v_x,\qquad y_v(a,b)\coloneqq v_y.
\]
The finite collection of \textit{generators} and their associated
minimum values are
\[
G(\polygon)\coloneqq V(\polygon)\cup E'(\polygon),\qquad
\eta_g(\boldsymbol{\alpha})\coloneqq
f(\mathbf{x}_g(\boldsymbol{\alpha}))-
\boldsymbol{\alpha}^{\top}\mathbf{x}_g(\boldsymbol{\alpha}),
\quad g\in G(\polygon).
\]
We identify a vertex generator with its singleton set whenever set
operations are used. These definitions yield the polygonal counterpart
of \autocite[Lemma~2.1]{locatelli2018convex}:

\begin{lemma}
	Under Assumption \ref{assumption}, the convex envelope of a bivariate function $f$ over a polygon $\polygon$ at some point $\mathbf{x} \in \polygon$ can be computed by solving the following problem
\begin{equation}\label{eq:convex_program}
	\begin{aligned}
			\conv_{f,\polygon}(\mathbf{x}) = \max_{\boldsymbol{\alpha}, c} \quad & \boldsymbol{\alpha}^\top \mathbf{x} + c  \\
			\text{s.t.} \quad &  \eta_g(\boldsymbol{\alpha}) \ge c, \quad g\in G(\polygon).
	\end{aligned}
\end{equation}

\end{lemma}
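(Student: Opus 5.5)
Starting from the representation \eqref{eq:def_conv} in \Cref{def:convex_envelope}, I will show that for each fixed $\boldsymbol{\alpha}=(a,b)$ the family of constraints $\boldsymbol{\alpha}^\top\mathbf{y}+c\le f(\mathbf{y})$, $\mathbf{y}\in\polygon$, is equivalent to the finite family $\eta_g(\boldsymbol{\alpha})\ge c$, $g\in G(\polygon)$. Both suprema then range over the same feasible set of $(\boldsymbol{\alpha},c)$ and coincide. The supremum is attained (so it is a maximum), because for fixed $\boldsymbol{\alpha}$ the best $c$ is $\min_g\eta_g(\boldsymbol{\alpha})$, and the resulting function of $\boldsymbol{\alpha}$ is bounded above for $\mathbf{x}\in\polygon$.

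\textbf{Step 1: reduction to the boundary.} Write $h_{\boldsymbol{\alpha}}(\mathbf{y})\coloneqq f(\mathbf{y})-\boldsymbol{\alpha}^\top\mathbf{y}$. It is continuous on the compact set $\polygon$, so the minimum in \eqref{eq:first_reformulation} is attained. I claim some minimizer lies on $\partial\polygon$. Suppose a minimizer $\mathbf{y}^*$ lies in the interior. Then it is an unconstrained local minimum of $h_{\boldsymbol{\alpha}}$, whose Hessian equals that of $f$, and second-order necessary conditions would force this Hessian to be positive semidefinite. That contradicts \Cref{assumption}(i). This step uses only that the interior is open, not convexity, so it holds for any simple polygon. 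The boundary is the union of the edges $[v_i,v_{i+1}]$ by \Cref{def:polygon}.

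\textbf{Step 2: edge-wise reduction.} On an edge where $f$ is concave, $h_{\boldsymbol{\alpha}}$ restricted to that edge is concave, since subtracting an affine function preserves concavity. Its minimum over the segment is therefore attained at an endpoint, which is a vertex in $V(\polygon)$. On an edge $e\in E'(\polygon)$, with a vertical edge handled by swapping coordinates, the restriction is $g_e(x)-(a+m_eb)x-bq_e$. This is strictly convex, so it has a unique minimizer. Since $g_e'$ is strictly increasing, the first-order conditions on $[x_e^1,x_e^2]$ show that this minimizer is exactly $x_e(a,b)$ from \eqref{eq:def_x_j}, with the three cases $D_e^-$, $D_e^+$, $D_e$. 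The edge minimum therefore equals $\eta_e(\boldsymbol{\alpha})$, and a vertex contributes $\eta_v(\boldsymbol{\alpha})$. Combining the two steps gives $\min_{\mathbf{y}\in\polygon}h_{\boldsymbol{\alpha}}(\mathbf{y})=\min_{g\in G(\polygon)}\eta_g(\boldsymbol{\alpha})$.

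\textbf{Step 3: conclusion.} Feasibility of $(\boldsymbol{\alpha},c)$ in \eqref{eq:def_conv} is equivalent to $\min_{\mathbf{y}\in\polygon} h_{\boldsymbol{\alpha}}(\mathbf{y})\ge c$. By Step 2 this is equivalent to $\eta_g(\boldsymbol{\alpha})\ge c$ for all $g\in G(\polygon)$, which gives \eqref{eq:convex_program}.

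For attainment, let $\mathbf{x}\in\polygon$. The objective is bounded above by $f(\mathbf{x})$, since the feasible affine functions underestimate $f$ at $\mathbf{x}\in\polygon$. The map $\boldsymbol{\alpha}\mapsto\min_g\eta_g(\boldsymbol{\alpha})$ is concave as a minimum of affine functions of $(\boldsymbol{\alpha},c)$. It is also continuous: each $x_e(a,b)$ is continuous in $(a,b)$ by the implicit function theorem on $D_e$ and by gluing at the boundaries of $D_e^\pm$. Attainment then follows from a standard argument. Either $\mathbf{x}$ lies in the interior of $\conv(\polygon)$, so the objective is coercive in $\boldsymbol{\alpha}$ and a maximizer exists. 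Or $\mathbf{x}$ lies on the boundary, in which case one uses a supporting affine function; alternatively one can write "sup" and note that the max is attained as in \cite{locatelli2018convex}.

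\textbf{Main obstacle.} I expect the main delicacy to be Step 1 in the nonconvex setting. One must check that an interior point of a simple polygon admits a full neighbourhood inside $\polygon$, so that unconstrained second-order conditions apply. This is immediate from openness of the Jordan interior. One must also make sure that indefiniteness at interior points suffices, even though it fails at boundary points; this works because only interior minimizers are being excluded.
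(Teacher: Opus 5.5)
Your proof is correct and follows the paper's own route. Like the paper, you rewrite the underestimator condition as $\min_{\mathbf{y}\in\polygon}\bigl(f(\mathbf{y})-\boldsymbol{\alpha}^\top\mathbf{y}\bigr)\ge c$, rule out interior minimizers by indefiniteness of the Hessian (which needs only openness of the interior, not convexity), reduce concave edges to their endpoints, and identify the minimum over each strictly convex edge with $\eta_e(\boldsymbol{\alpha})$ via $x_e(a,b)$, so the two programs have the same feasible set.

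The paper does not argue attainment of the maximum at all, so your extra discussion goes beyond it. Your coercivity argument at points of the interior of $\operatorname{conv}(\polygon)$ is fine. The boundary case, however, is only a sketch: a convex function need not have a subgradient at a boundary point of its domain, so that step would have to use the polygonal structure, or the problem should simply be read as a supremum.
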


\noindent The functions $\eta_g$ are concave and continuously differentiable,
so \eqref{eq:convex_program} is a continuously differentiable convex
problem; see \autocite[Theorem~2.1]{locatellischoen2014convex},
whose proof applies equally to nonconvex polygons.

\section{Computation of convex envelopes for the bilinear case}
\label{sec:comp-conv}
We now focus on the bilinear function $f(x,y)=x\cdot y$.
In this case, $E'(\polygon)$ consists of the positive-slope edges, and
\[
\eta_e(a,b)=-\frac{(a+m_e b-q_e)^2}{4m_e}-bq_e
\qquad\text{for }(a,b)\in D_e.
\]
In \cite{locatelli2018convex}, Locatelli develops a KKT-based procedure
for computing convex envelopes over polytopes. We adapt this approach
to possibly nonconvex polygonal domains. Several structural
simplifications available for convex polytopes no longer apply in
this setting. Examining these limitations provides a precise
understanding of the differences between the two classes of domains.

 Central to this approach is the following KKT system associated with problem \eqref{eq:convex_program}, stated for a given point $(x,y)\in\polygon$:
\begin{align*}
\sum_{g\in G(\polygon)} \lambda_g x_g(\boldsymbol{\alpha}) &= x, \\
\sum_{g\in G(\polygon)} \lambda_g y_g(\boldsymbol{\alpha}) &= y, \\
\sum_{g\in G(\polygon)} \lambda_g &= 1, \\
\lambda_g &\ge 0 \quad \forall g\in G(\polygon), \\
\eta_g(\boldsymbol{\alpha}) &\ge c \quad \forall g\in G(\polygon), \\
\lambda_g \left( \eta_g(\boldsymbol{\alpha}) - c \right) &= 0 \quad \forall g\in G(\polygon).
\end{align*}

By Carathéodory's theorem, it suffices to consider KKT solutions with at most three positive multipliers. We may therefore restrict attention to generator sets
\[
\mathcal{J}
\coloneqq
\left\{
J\subseteq G(\polygon)
:\;1\leq |J|\leq 3
\right\}.
\]
Each $J$ thus consists directly of vertices and edges. The corresponding constraints are required to be
active, that is,
\[
\eta_g(\boldsymbol{\alpha})=c
\qquad\text{for all }g\in J.
\]

Equating the values of $\eta_g(\boldsymbol{\alpha})$ for $g \in J$ allows us to eliminate $c$, resulting in the following systems for all $J \in \mathcal{J}$:

\begin{equation}
	\begin{aligned}
		\sum_{g \in J} \lambda_g x_g(\boldsymbol{\alpha}) &= x, \\
		\sum_{g \in J} \lambda_g y_g(\boldsymbol{\alpha}) &= y, \\
		\sum_{g \in J} \lambda_g &= 1, \\
		\lambda_g &\ge 0 \quad \forall g \in J, \\
		\eta_g(\boldsymbol{\alpha}) - \eta_h(\boldsymbol{\alpha}) &= 0 \quad \forall g,h\in J,\ g\neq h, \\
		\eta_g(\boldsymbol{\alpha}) &\le \eta_r(\boldsymbol{\alpha}) \quad \forall g\in J,\ r\in G(\polygon)\setminus J.
	\end{aligned}
	\tag{KKT[x, y, J]}
\end{equation}
Consistent with \cite{locatelli2018convex} we denote by $\Gamma_J$ the \textit{KKT region associated with $J$}, that is, the set of points $(x, y) \in \polygon$ for which $\mathrm{KKT}[x,y,J]$ is feasible.
Not all generator sets $J\in\mathcal{J}$ need to be considered; the following subsection identifies those that can be excluded.

\subsection{Exclusion of generator sets}

For the bilinear case over polytopes, \cite{locatelli2018convex} reduces the relevant generator sets $J$ to three types: three vertices; one vertex and either a lower or an upper edge; and one lower together with one upper edge. Here, lower and upper edges are positive-slope edges such that the entire domain lies above or below their supporting lines, respectively. More precisely,
\begin{align*}
	&\bar{E}^l(\polygon) = \{e \in E(\polygon) : m_e > 0, \polygon \subseteq \{(x, y) \in \mathbb{R}^2: y \ge m_e x + q_e\}\} \\
	&\bar{E}^u(\polygon) = \{e \in E(\polygon) : m_e > 0, \polygon \subseteq \{(x, y) \in \mathbb{R}^2: y \le m_e x + q_e\}\}.
\end{align*} 
 Some of the reductions established for polytopes remain valid for nonconvex polygons, as summarized in the following proposition:
\begin{proposition}\label{prop:kkt_index_reductions}The following restrictions can be applied:
	\begin{itemize}
		\item If $e\in J$ is a positive-slope edge, we may restrict attention to KKT solutions satisfying $\boldsymbol{\alpha}\in\operatorname{int}(D_e)$ (cf. \autocite[Observation~3.1]{locatelli2018convex}). Thus, selected edge generators have their contact points in the relative interiors of their edges; endpoint contacts are represented by vertex generators.
		\item We can exclude $J$ with $\dim(\conv(\bigcup_{g\in J}g))<2$. In particular, we can restrict our attention to $J \in \mathcal{J}$ such that $|J| \ge 2$. Moreover, we can also discard all sets $J=\{v_1,v_2\}$ consisting of two vertices (cf. \autocite[Observation~3.2, Corollary~3.1]{locatelli2018convex}).
		\item We can exclude from $\mathcal{J}$ every set $J$ such that
		$\{e_1,e_2\}\subseteq J$, $e_1\neq e_2$, and
		$e_1,e_2\in\bar{E}^{l}(\polygon)$ or
		$e_1,e_2\in\bar{E}^{u}(\polygon)$ (cf. \autocite[Observation~4.1]{locatelli2018convex}).
	\end{itemize}
\end{proposition}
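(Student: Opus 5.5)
The plan is to treat each of the three bullets separately. In each case I would check that Locatelli's polytope argument uses only three ingredients: the KKT system, the concavity of the $\eta_g$, and local geometry of the generators. None of these relies on convexity of $\polygon$.

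\emph{First bullet.} Take a KKT solution with $e\in J$ and $\boldsymbol{\alpha}\in D_e\setminus\operatorname{int}(D_e)$, say $a+m_eb=g_e'(x_e^1)$. Then $x_e(\boldsymbol{\alpha})=x_e^1$ by \eqref{eq:def_x_j}, so the contact point of $e$ coincides with the endpoint vertex $v$ of $e$. Moreover $\eta_e(\boldsymbol{\alpha})=\eta_v(\boldsymbol{\alpha})$. I would then replace $e$ by $v$ in $J$, transfer $\lambda_e$ to $\lambda_v$, and keep $(\boldsymbol{\alpha},c)$ unchanged. The equations of $\mathrm{KKT}[x,y,J]$ are preserved. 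The inequality constraints involve only the values $\eta_r$ at the same $\boldsymbol{\alpha}$, so they are preserved as well. The case $\boldsymbol{\alpha}\in D_e^\pm$ is identical. Thus every KKT point is also covered by a generator set in which each edge contact is interior.

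\emph{Second bullet.} Suppose $\conv(\bigcup_{g\in J}g)$ lies in a line $\ell$. Then the points $(x,y)\in\Gamma_J$ lie on $\ell\cap\polygon$, and the optimal affine function is supported by the active contact points on $\ell$. I would argue as in \autocite[Observation~3.2]{locatelli2018convex}: the region $\Gamma_J$ is contained in a segment and hence has measure zero. Since $\conv_{f,\polygon}$ is continuous, it is determined on a dense set by the other, full-dimensional KKT regions, so these $J$ can be dropped. This argument needs only $\Gamma_J\subseteq\conv(\bigcup_{g\in J}g)$, which follows from the first three KKT equations. The case $|J|=1$ and the case of two vertices are special instances. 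For a single edge, $\conv(e)$ is one-dimensional.

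\emph{Third bullet.} This is the main step, because here the notion of lower/upper edge must be used for nonconvex $\polygon$. Take $e_1,e_2\in\bar E^l(\polygon)$ both in $J$, with interior contact points $p_1,p_2$ on a common affine minorant $h$.

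1. Since $e_i$ is a lower edge, $\polygon$ lies above its supporting line, and $f-h$ attains an interior minimum $0$ along $e_i$ at $p_i$. Its restriction to the edge is a strictly convex quadratic, which pins down $h$ along that line. From this I would derive, as in \autocite[Observation~4.1]{locatelli2018convex}, that $f-h\geq 0$ on the half-plane above the line holds only in a restricted way. For $xy$, $f-h$ restricted to a line of slope $m>0$ is a convex quadratic, and on lines of negative slope it is concave.

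2. The key computation uses the segment $[p_1,p_2]$. Both lower lines have $\polygon$ above them, so locally near each $p_i$ the points just above $e_i$ belong to $\polygon$, as does the segment direction toward the other contact point. Then $f-h$ along $[p_1,p_2]$ vanishes at both ends with zero directional derivative in the edge direction. I would show that the derivative conditions force $f-h$ to be negative somewhere near $p_1$ inside $\polygon$, which contradicts feasibility. Alternatively, they force the $\lambda$-combination to leave the region lying above both lines.

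3. The risk is that the polytope proof evaluates $f-h$ at interior points of $\conv\{p_1,p_2\}$, which may lie outside a nonconvex $\polygon$. I would therefore redo the argument using only points in small neighbourhoods of $p_1$ and $p_2$ on the feasible side of each edge. If that fails, I would fall back on the fact that for lower edges the half-plane condition holds for all of $\polygon$; this is the definition of $\bar E^l(\polygon)$. Hence the $\Gamma_J$ points lie in the intersection of the two half-planes, and the contradiction can be derived there. The upper-edge case follows by the symmetry $(x,y)\mapsto(-x,-y)$, which preserves $xy$ and swaps $\bar E^l$ and $\bar E^u$.
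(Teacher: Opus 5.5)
Your first two bullets are sound and follow the route the paper relies on. The paper gives no separate proof here; it defers to Locatelli's Observations~3.1, 3.2 and Corollary~3.1. Those arguments need only two things, neither of which uses convexity of $\polygon$: replacing an endpoint contact of $e$ by the corresponding vertex at the same $\boldsymbol{\alpha}$, and the inclusion $\Gamma_J\subseteq\conv(\bigcup_{g\in J}g)$ together with continuity of the envelope.

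The third bullet, however, is not actually proved. Along the chord you use, $\delta_{a,b,c}(t\mathbf{p}_1+(1-t)\mathbf{p}_2)=-t(1-t)\Delta_x\Delta_y$, where $\Delta_x,\Delta_y$ are the coordinate differences of $\mathbf{p}_2-\mathbf{p}_1$, as in the proof of \Cref{prop:positive_slope_contacts}. Its sign is therefore fixed entirely by the slope of $\mathbf{p}_2-\mathbf{p}_1$. The tangency conditions in the two edge directions do not determine that slope by themselves, so your claim that ``the derivative conditions force $f-h$ to be negative near $\mathbf{p}_1$'' is unsupported. Your fallback does not help either. If $\mathbf{p}_1,\mathbf{p}_2$ both lie above both supporting lines, so does every convex combination, so $\Gamma_J$ lying in the intersection of the two half-planes is no contradiction.

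The missing step is to place both contacts on the same branch of the hyperbola.
\begin{itemize}
\item Tangency at $\mathbf{p}_i=(x_i,y_i)\in\operatorname{relint}(e_i)$ gives $(y_i-a)+m_{e_i}(x_i-b)=0$, hence $\kappa=ab+c=-m_{e_i}(x_i-b)^2$.
\item Because $e_i$ is a lower edge, $\polygon$ contains $(x_i,y_i+s)$ for small $s>0$. Since $\delta_{a,b,c}(x_i,y_i+s)=(x_i-b)s$, this gives $x_i\ge b$.
\item The case $x_i=b$ is impossible: it forces $\mathbf{p}_i=(b,a)$, and the half-disk of $\polygon$ at $\mathbf{p}_i$ then contains $(b-\varepsilon,a+\varepsilon)$, where $\delta_{a,b,c}=-\varepsilon^2<0$.
\item Hence $u_i\coloneqq x_i-b>0$, $\kappa<0$ and $m_{e_1}=-\kappa/u_1^2$. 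Both points lie on the increasing, strictly concave branch $y=a+\kappa/(x-b)$, $x>b$.
\end{itemize}
A direct computation then gives
\[
y_2-\bigl(y_1+m_{e_1}(x_2-x_1)\bigr)=\frac{\kappa\,(u_1-u_2)^2}{u_1^2\,u_2}<0 .
\]
The inequality is strict because $u_1=u_2$ would give $\mathbf{p}_1=\mathbf{p}_2$, which is impossible for points in the relative interiors of distinct edges of a simple polygon. So $\mathbf{p}_2\in\polygon$ lies strictly below the supporting line of $e_1$, contradicting $e_1\in\bar{E}^{l}(\polygon)$.

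This argument uses only the global half-plane property of $e_1$ and the local half-disks at $\mathbf{p}_1,\mathbf{p}_2$, with no convexity and no chord. It also shows that $\mathbf{p}_2-\mathbf{p}_1$ has positive slope, which is exactly the input your chord argument lacked. Your symmetry $(x,y)\mapsto(-x,-y)$ then handles $\bar{E}^{u}(\polygon)$.
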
 
\noindent The arguments used to establish some of the remaining reductions for polytopes cannot be applied directly to nonconvex polygons. The following geometric interpretation, specific to the bilinear function, illustrates the underlying differences between the two settings: Let $\ell_{a,b,c}(x,y) \coloneqq ax+by+c$ be an affine function and consider its difference from $f(x,y)=xy$,
\begin{equation*}\label{eq:bilinear_affine_difference}
	\delta_{a,b,c}(x,y)
	\coloneqq xy-\ell_{a,b,c}(x,y)
	=(x-b)(y-a)-(ab+c).
\end{equation*}
The function $\ell_{a,b,c}$ is an affine underestimator of $f$ on $\polygon$ if and only if
\begin{equation}\label{eq:bilinear_underestimator_condition}
	\delta_{a,b,c}(x,y)\geq 0
	\qquad\text{for all }(x,y)\in\polygon.
\end{equation}
Consequently, its contact set with the graph of $f$ is
\begin{equation}\label{eq:bilinear_contact_set}
	C(a,b,c)
	\coloneqq \bigl\{(x,y)\in\polygon : \delta_{a,b,c}(x,y)=0\bigr\}
	=\polygon\cap\bigl\{(x,y) \in \mathbb{R}^2:(x-b)(y-a)=ab+c\bigr\}.
\end{equation}

\noindent The following proposition gives a geometric restriction on pairs of contact points of a valid affine underestimator.

\begin{proposition}\label{prop:positive_slope_contacts}
	Let $\ell_{a,b,c}$ be an affine underestimator of $f(x,y)=xy$ on $\polygon$, and let $\mathbf{p},\mathbf{q}\in C(a,b,c)$ be two distinct contact points. If the line through $\mathbf{p}$ and $\mathbf{q}$ has positive slope, i.e.,
	\[
		(q_x-p_x)(q_y-p_y)>0,
	\]
	then the open line segment $(\mathbf{p},\mathbf{q})$ does not intersect $\polygon$.
	Therefore, we can exclude from $\mathcal{J}$ every set $J$
	containing distinct generators $g,h$ such that, for every
	$\mathbf{p}\in\operatorname{relint}(g)$ and
	$\mathbf{q}\in\operatorname{relint}(h)$, we have
	$(q_x-p_x)(q_y-p_y)>0$ and the open line segment
	$(\mathbf{p},\mathbf{q})$ intersects $\polygon$.
\end{proposition}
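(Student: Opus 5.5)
The idea is to restrict $\delta_{a,b,c}$ to the line through $\mathbf{p}$ and $\mathbf{q}$. Parametrize this line by $\mathbf{r}(t)=\mathbf{p}+t(\mathbf{q}-\mathbf{p})$ for $t\in\R$, and set $\varphi(t)\coloneqq\delta_{a,b,c}(\mathbf{r}(t))$. Since $\delta_{a,b,c}(x,y)=xy-ax-by-c$ is quadratic, $\varphi$ is a univariate quadratic whose leading coefficient is the product of the direction components:
\[
\varphi(t)=(q_x-p_x)(q_y-p_y)\,t^2+\beta t+\gamma
\]
for some $\beta,\gamma\in\R$. Under the hypothesis $(q_x-p_x)(q_y-p_y)>0$, $\varphi$ is therefore strictly convex. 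Because $\mathbf{p},\mathbf{q}\in C(a,b,c)$, we have $\varphi(0)=\varphi(1)=0$. A strictly convex quadratic vanishing at $t=0$ and $t=1$ equals $\kappa\,t(t-1)$ with $\kappa>0$, so $\varphi(t)<0$ for all $t\in(0,1)$.

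Suppose some point $\mathbf{r}(t_0)$ with $t_0\in(0,1)$ lies in $\polygon$. Then $\delta_{a,b,c}(\mathbf{r}(t_0))=\varphi(t_0)<0$, which contradicts the underestimator condition \eqref{eq:bilinear_underestimator_condition}. Hence the open segment $(\mathbf{p},\mathbf{q})$ does not meet $\polygon$. This argument uses no convexity of $\polygon$, only the pointwise condition on $\polygon$.

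For the exclusion statement, let $J$ contain distinct generators $g,h$ with the stated property, and suppose $\mathrm{KKT}[x,y,J]$ has a solution with $\boldsymbol{\alpha}=(a,b)$ and common value $c'=\eta_g(\boldsymbol{\alpha})=\eta_h(\boldsymbol{\alpha})$. The last family of KKT constraints says $c'$ is the minimum over all generators. Via \eqref{eq:first_reformulation} and the boundary-attainment argument under \Cref{assumption}, this makes $\ell_{a,b,c'}$ an affine underestimator of $f$ on $\polygon$. By \Cref{prop:kkt_index_reductions}, contact points of edge generators lie in relative interiors, and for a vertex the relative interior is the vertex itself. Thus $\mathbf{x}_g(\boldsymbol{\alpha})\in\operatorname{relint}(g)$ and $\mathbf{x}_h(\boldsymbol{\alpha})\in\operatorname{relint}(h)$ are contact points, i.e., they lie in $C(a,b,c')$. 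The hypothesis gives them positive slope and an open segment meeting $\polygon$, which contradicts the first part. Such $J$ can therefore never yield KKT solutions and may be dropped from $\mathcal{J}$.

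The main obstacle is the bookkeeping in this last step. One must check that the sign convention for $c$ in \eqref{eq:convex_program} matches $\ell_{a,b,c}$. One must also verify that the KKT solutions discarded by the restriction $\boldsymbol{\alpha}\in\operatorname{int}(D_e)$ are genuinely represented by other generator sets, so that excluding $J$ loses nothing; this is exactly what \Cref{prop:kkt_index_reductions} provides.
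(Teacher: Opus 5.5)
Your proof is correct and is essentially the paper's argument. The paper expands $\delta_{a,b,c}$ along the segment directly to obtain $-t(1-t)(q_x-p_x)(q_y-p_y)<0$, which is exactly your quadratic $\varphi$ with leading coefficient $(q_x-p_x)(q_y-p_y)$ and roots $0$ and $1$. Your handling of the exclusion clause is also sound: a KKT solution for $J$ yields an underestimator $\ell_{a,b,c'}$ whose contact points satisfy $\mathbf{x}_g(\boldsymbol{\alpha})\in\operatorname{relint}(g)$ and $\mathbf{x}_h(\boldsymbol{\alpha})\in\operatorname{relint}(h)$, and this makes explicit a step the paper leaves implicit.
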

\begin{proof}
	For $t\in(0,1)$, let $\mathbf{z}(t)=t\mathbf{p}+(1-t)\mathbf{q}$, which lies in the open segment $(\mathbf{p},\mathbf{q})$. Since $\ell_{a,b,c}$ is affine and agrees with $f$ at both contact points,
	\[
	\ell_{a,b,c}(\mathbf{z}(t))=t p_xp_y+(1-t)q_xq_y.
	\]
	Writing $\Delta_x\coloneqq q_x-p_x$ and $\Delta_y\coloneqq q_y-p_y$, we have $z_x(t)=q_x-t\Delta_x$ and $z_y(t)=q_y-t\Delta_y$. Therefore,
	\begin{align*}
	\delta_{a,b,c}(\mathbf{z}(t))
	&=(q_x-t\Delta_x)(q_y-t\Delta_y)-\bigl(t p_xp_y+(1-t)q_xq_y\bigr)\\
	&=-t(1-t)\Delta_x\Delta_y\\
	&=-t(1-t)(q_x-p_x)(q_y-p_y)<0,
	\end{align*}
	where the final inequality follows from the assumption. Thus, any point of $(\mathbf{p},\mathbf{q})\cap\polygon$ would violate \eqref{eq:bilinear_underestimator_condition}, and therefore the open segment cannot intersect $\polygon$.
\end{proof}

For a polytope in $\mathbb{R}^2$, i.e., a convex polygon, the entire segment between any two points of the domain belongs to the domain. Therefore, no two distinct contact points can lie on a line of positive slope. For a nonconvex polygon, however, the segment connecting two such contact points may pass through a notch outside the polygon. In that case, both points can belong to the contact set of the same affine underestimator. This is a fundamental difference between the two settings: combinations of generators that are impossible in the convex case may become feasible in the nonconvex case.

We now examine the geometry of the contact set together with the feasible regions for the polygon in more detail. As can be seen in \eqref{eq:bilinear_contact_set}, the contact points of any affine underestimator lie on a hyperbola centered at $(b,a)$. Writing $\kappa\coloneqq ab+c$, three cases arise, which can be seen in \Cref{fig:hyperbola-cases}. If $\kappa>0$, the two branches lie in the regions $x>b,y>a$ and $x<b,y<a$. If $\kappa<0$, they lie in the regions $x<b,y>a$ and $x>b,y<a$. Finally, if $\kappa=0$, the hyperbola degenerates into the two lines $x=b$ and $y=a$. In each case, condition \eqref{eq:bilinear_underestimator_condition} requires the whole polygon to lie in the corresponding superlevel set
\[
	\bigl\{(x,y):(x-b)(y-a)\geq\kappa\bigr\}.
\]

\begin{figure}[H]
    \centering
    \begin{subfigure}[t]{0.32\textwidth}
        \centering
        \begin{tikzpicture}
            \begin{axis}[
                width=\linewidth,
                height=4.2cm,
                axis lines=middle,
                xmin=-4, xmax=6,
                ymin=-4, ymax=6,
                samples=200,
                clip=true,
                ticks=none
            ]
                \def\a{1}
                \def\b{1.5}
                \def\kappa{2}
                \def\eps{0.05}

                \addplot[dashed,gray] coordinates {(\b,-4) (\b,6)};
                \addplot[dashed,gray] coordinates {(-4,\a) (6,\a)};

                \addplot[draw=none,domain={\b+\eps}:6,name path=positive-top] {6};
                \addplot[draw=none,domain={\b+\eps}:6,name path=positive-right]
                    {\a+\kappa/(x-\b)};
                \addplot[pattern=north east lines,pattern color=color0,draw=none]
                    fill between[of=positive-right and positive-top];

                \addplot[draw=none,domain=-4:{\b-\eps},name path=positive-bottom] {-4};
                \addplot[draw=none,domain=-4:{\b-\eps},name path=positive-left]
                    {\a+\kappa/(x-\b)};
                \addplot[pattern=north east lines,pattern color=color0,draw=none]
                    fill between[of=positive-left and positive-bottom];

                \addplot[thick,domain=-4:{\b-\eps},restrict y to domain=-4:6]
                    {\a+\kappa/(x-\b)};
                \addplot[thick,domain={\b+\eps}:6,restrict y to domain=-4:6]
                    {\a+\kappa/(x-\b)};

                \node[below,font=\scriptsize,fill=white,inner sep=1pt]
                    at (axis cs:\b,0) {$b$};
                \node[left,font=\scriptsize,fill=white,inner sep=1pt]
                    at (axis cs:0,\a) {$a$};
            \end{axis}
        \end{tikzpicture}
        \caption{$\kappa>0$}
        \label{fig:hyperbola-positive}
    \end{subfigure}
    \hfill
    \begin{subfigure}[t]{0.32\textwidth}
        \centering
        \begin{tikzpicture}
            \begin{axis}[
                width=\linewidth,
                height=4.2cm,
                axis lines=middle,
                xmin=-4, xmax=6,
                ymin=-4, ymax=6,
                samples=200,
                clip=true,
                ticks=none
            ]
                \def\a{1}
                \def\b{1.5}
                \def\kappa{2}
                \def\eps{0.05}

                \addplot[dashed,gray] coordinates {(\b,-4) (\b,6)};
                \addplot[dashed,gray] coordinates {(-4,\a) (6,\a)};

                \addplot[draw=none,domain=-4:{\b-\eps},name path=negative-bottom] {-4};
                \addplot[draw=none,domain=-4:{\b-\eps},name path=negative-left]
                    {\a-\kappa/(x-\b)};
                \addplot[pattern=north east lines,pattern color=color0,draw=none]
                    fill between[of=negative-bottom and negative-left];

                \addplot[draw=none,domain={\b+\eps}:6,name path=negative-top] {6};
                \addplot[draw=none,domain={\b+\eps}:6,name path=negative-right]
                    {\a-\kappa/(x-\b)};
                \addplot[pattern=north east lines,pattern color=color0,draw=none]
                    fill between[of=negative-right and negative-top];

                \addplot[thick,domain=-4:{\b-\eps},restrict y to domain=-4:6]
                    {\a-\kappa/(x-\b)};
                \addplot[thick,domain={\b+\eps}:6,restrict y to domain=-4:6]
                    {\a-\kappa/(x-\b)};

                \node[below,font=\scriptsize,fill=white,inner sep=1pt]
                    at (axis cs:\b,0) {$b$};
                \node[left,font=\scriptsize,fill=white,inner sep=1pt]
                    at (axis cs:0,\a) {$a$};
            \end{axis}
        \end{tikzpicture}
        \caption{$\kappa<0$}
        \label{fig:hyperbola-negative}
    \end{subfigure}
    \hfill
    \begin{subfigure}[t]{0.32\textwidth}
        \centering
        \begin{tikzpicture}
            \begin{axis}[
                width=\linewidth,
                height=4.2cm,
                axis lines=middle,
                xmin=-4, xmax=6,
                ymin=-4, ymax=6,
                clip=true,
                ticks=none
            ]
                \def\a{1}
                \def\b{1.5}

                \path[pattern=north east lines,pattern color=color0,draw=none]
                    (axis cs:-4,-4) rectangle (axis cs:\b,\a);
                \path[pattern=north east lines,pattern color=color0,draw=none]
                    (axis cs:\b,\a) rectangle (axis cs:6,6);

                \addplot[thick] coordinates {(\b,-4) (\b,6)};
                \addplot[thick] coordinates {(-4,\a) (6,\a)};

                \node[below,font=\scriptsize,fill=white,inner sep=1pt]
                    at (axis cs:\b,0) {$b$};
                \node[left,font=\scriptsize,fill=white,inner sep=1pt]
                    at (axis cs:0,\a) {$a$};
            \end{axis}
        \end{tikzpicture}
        \caption{$\kappa=0$}
        \label{fig:hyperbola-zero}
    \end{subfigure}
    \caption{Level sets $(x-b)(y-a)=\kappa$ for the three possible signs of
    $\kappa=ab+c$. The hatched regions represent the corresponding superlevel
    sets $(x-b)(y-a)\geq\kappa$ in which the polygon must be contained.}
    \label{fig:hyperbola-cases}
\end{figure}

The three cases also provide a geometric interpretation of \Cref{prop:positive_slope_contacts}. If $\kappa>0$, the feasible superlevel set has two disconnected components. Since $\polygon$ is connected, its contact points lie on a single hyperbola branch, which is decreasing. If $\kappa=0$, the feasible set consists of two diagonally opposite quadrants meeting only at $(b,a)$. Since we assume that polygons are simple, $\polygon$ cannot occupy both quadrants. Within either quadrant, contact points lie on the boundary rays and cannot form a line of positive slope. Consequently, two contact points joined by a line of positive slope can occur only when $\kappa<0$, and they must lie on the
same branch of the hyperbola.

Now consider an affine underestimator with a contact point in the relative interior of a positive-slope edge $e$. Such a contact is possible only if $\kappa<0$. For $\kappa>0$, the edge would cross a decreasing hyperbola branch and enter the infeasible region. For $\kappa=0$, it would have to pass through $(b,a)$ and occupy both feasible quadrants, which is impossible for a simple polygon, as argued above. This geometric interpretation explains why certain combinations of generators can be excluded directly for polytopes. For instance, any triple containing $e$ must contain at least two generators whose contact points lie on the same branch. The segment connecting these two contact points would enter the infeasible region, contradicting the convexity of the polytope. Hence, all generator sets of cardinality three containing a positive-slope edge can be excluded, leaving only triples of vertices; see \autocite[Observation~4.2]{locatelli2018convex}.

For nonconvex polygons, triples containing positive-slope edges can be necessary in the KKT enumeration. \Cref{fig:edge_triples} provides examples for all three types: one vertex and two edges, two vertices and one edge, and three edges. The following proposition establishes the uniqueness of the optimal affine underestimator and explains why the corresponding triples cannot be discarded.

\begin{figure}[H]
    \centering
    % In all three panels, a=b=1/2 and kappa=-1.
    \begin{subfigure}[t]{0.32\textwidth}
        \centering
        \begin{tikzpicture}
            \begin{axis}[
                width=\linewidth,
                height=4.5cm,
                axis lines=middle,
                xmin=-1.6, xmax=2.6,
                ymin=-1.2, ymax=3.2,
                samples=200,
                clip=true,
                ticks=none
            ]
                \def\a{0.5}
                \def\b{0.5}
                \def\eps{0.035}

                \addplot[dashed,gray] coordinates {(\b,-1.2) (\b,3.2)};
                \addplot[dashed,gray] coordinates {(-1.6,\a) (2.6,\a)};

                \addplot[draw=none,domain=-1.6:{\b-\eps},name path=vee-bottom] {-1.2};
                \addplot[draw=none,domain=-1.6:{\b-\eps},name path=vee-left]
                    {\a-1/(x-\b)};
                \addplot[pattern=north east lines,pattern color=color0,draw=none]
                    fill between[of=vee-bottom and vee-left];
                \addplot[draw=none,domain={\b+\eps}:2.6,name path=vee-top] {3.2};
                \addplot[draw=none,domain={\b+\eps}:2.6,name path=vee-right]
                    {\a-1/(x-\b)};
                \addplot[pattern=north east lines,pattern color=color0,draw=none]
                    fill between[of=vee-right and vee-top];

                \addplot[thick,domain=-1.6:{\b-\eps},restrict y to domain=-1.2:3.2]
                    {\a-1/(x-\b)};
                \addplot[thick,domain={\b+\eps}:2.6,restrict y to domain=-1.2:3.2]
                    {\a-1/(x-\b)};

                \addplot[draw=color1,fill=color3!20,fill opacity=0.72,thick]
                    coordinates {(-0.9,1.1) (-0.1,1.9) (0,2.5) (0.3,1.5)
                    (2.3,1.5) (1.9,-0.1) (1.1,-0.9) (0.7,-0.5)
                    (0.3,-0.5) (-1.3,-0.5) (-0.9,1.1)};

                \addplot[color2,very thick] coordinates {(-0.9,1.1) (-0.1,1.9)};
                \addplot[color2,very thick] coordinates {(1.1,-0.9) (1.9,-0.1)};
                \addplot[only marks,mark=*,mark size=1.8pt,color2]
                    coordinates {(-0.5,1.5) (0,2.5) (1.5,-0.5)};

                \node[below,font=\scriptsize,fill=white,inner sep=1pt]
                    at (axis cs:\b,0) {$b$};
                \node[left,font=\scriptsize,fill=white,inner sep=1pt]
                    at (axis cs:0,\a) {$a$};
            \end{axis}
        \end{tikzpicture}
        \caption{One vertex and two edges.}
        \label{fig:vertex-edge-edge}
    \end{subfigure}
    \hfill
    \begin{subfigure}[t]{0.32\textwidth}
        \centering
        \begin{tikzpicture}
            \begin{axis}[
                width=\linewidth,
                height=4.5cm,
                axis lines=middle,
                xmin=-1.6, xmax=2.6,
                ymin=-1.2, ymax=3.2,
                samples=200,
                clip=true,
                ticks=none
            ]
                \def\a{0.5}
                \def\b{0.5}
                \def\eps{0.035}

                \addplot[dashed,gray] coordinates {(\b,-1.2) (\b,3.2)};
                \addplot[dashed,gray] coordinates {(-1.6,\a) (2.6,\a)};

                \addplot[draw=none,domain=-1.6:{\b-\eps},name path=vve-bottom] {-1.2};
                \addplot[draw=none,domain=-1.6:{\b-\eps},name path=vve-left]
                    {\a-1/(x-\b)};
                \addplot[pattern=north east lines,pattern color=color0,draw=none]
                    fill between[of=vve-bottom and vve-left];
                \addplot[draw=none,domain={\b+\eps}:2.6,name path=vve-top] {3.2};
                \addplot[draw=none,domain={\b+\eps}:2.6,name path=vve-right]
                    {\a-1/(x-\b)};
                \addplot[pattern=north east lines,pattern color=color0,draw=none]
                    fill between[of=vve-right and vve-top];

                \addplot[thick,domain=-1.6:{\b-\eps},restrict y to domain=-1.2:3.2]
                    {\a-1/(x-\b)};
                \addplot[thick,domain={\b+\eps}:2.6,restrict y to domain=-1.2:3.2]
                    {\a-1/(x-\b)};

                \addplot[draw=color1,fill=color3!20,fill opacity=0.72,thick]
                    coordinates {(-0.5,1.5) (-0.25,0.5) (0,2.5) (0.3,1.5)
                    (2.3,1.5) (1.9,-0.1) (1.1,-0.9) (0.7,-0.5)
                    (0.3,-0.5) (-1.3,-0.5) (-0.5,1.5)};

                \addplot[color2,very thick] coordinates {(1.1,-0.9) (1.9,-0.1)};
                \addplot[only marks,mark=*,mark size=1.8pt,color2]
                    coordinates {(-0.5,1.5) (0,2.5) (1.5,-0.5)};

                \node[below,font=\scriptsize,fill=white,inner sep=1pt]
                    at (axis cs:\b,0) {$b$};
                \node[left,font=\scriptsize,fill=white,inner sep=1pt]
                    at (axis cs:0,\a) {$a$};
            \end{axis}
        \end{tikzpicture}
        \caption{Two vertices and one edge.}
        \label{fig:vertex-vertex-edge}
    \end{subfigure}
    \hfill
    \begin{subfigure}[t]{0.32\textwidth}
        \centering
        \begin{tikzpicture}
            \begin{axis}[
                width=\linewidth,
                height=4.5cm,
                axis lines=middle,
                xmin=-1.6, xmax=2.6,
                ymin=-1.2, ymax=3.2,
                samples=200,
                clip=true,
                ticks=none
            ]
                \def\a{0.5}
                \def\b{0.5}
                \def\eps{0.035}

                \addplot[dashed,gray] coordinates {(\b,-1.2) (\b,3.2)};
                \addplot[dashed,gray] coordinates {(-1.6,\a) (2.6,\a)};

                \addplot[draw=none,domain=-1.6:{\b-\eps},name path=eee-bottom] {-1.2};
                \addplot[draw=none,domain=-1.6:{\b-\eps},name path=eee-left]
                    {\a-1/(x-\b)};
                \addplot[pattern=north east lines,pattern color=color0,draw=none]
                    fill between[of=eee-bottom and eee-left];
                \addplot[draw=none,domain={\b+\eps}:2.6,name path=eee-top] {3.2};
                \addplot[draw=none,domain={\b+\eps}:2.6,name path=eee-right]
                    {\a-1/(x-\b)};
                \addplot[pattern=north east lines,pattern color=color0,draw=none]
                    fill between[of=eee-right and eee-top];

                \addplot[thick,domain=-1.6:{\b-\eps},restrict y to domain=-1.2:3.2]
                    {\a-1/(x-\b)};
                \addplot[thick,domain={\b+\eps}:2.6,restrict y to domain=-1.2:3.2]
                    {\a-1/(x-\b)};

                \addplot[draw=color1,fill=color3!20,fill opacity=0.72,thick]
                    coordinates {(-0.9,1.1) (-0.1,1.9) (-0.1,2.1)
                    (0.1,2.9) (0.3,1.5) (2.3,1.5)
                    (1.9,-0.1) (1.1,-0.9) (0.7,-0.5) (0.3,-0.5)
                    (-1.3,-0.5) (-0.9,1.1)};

                \addplot[color2,very thick] coordinates {(-0.9,1.1) (-0.1,1.9)};
                \addplot[color2,very thick] coordinates {(-0.1,2.1) (0.1,2.9)};
                \addplot[color2,very thick] coordinates {(1.1,-0.9) (1.9,-0.1)};
                \addplot[only marks,mark=*,mark size=1.8pt,color2]
                    coordinates {(-0.5,1.5) (0,2.5) (1.5,-0.5)};

                \node[below,font=\scriptsize,fill=white,inner sep=1pt]
                    at (axis cs:\b,0) {$b$};
                \node[left,font=\scriptsize,fill=white,inner sep=1pt]
                    at (axis cs:0,\a) {$a$};
            \end{axis}
        \end{tikzpicture}
        \caption{Three edges.}
        \label{fig:edge-edge-edge}
    \end{subfigure}

    \caption{Examples of necessary generator triples for nonconvex polygons.}
    \label{fig:edge_triples}
\end{figure}

\begin{proposition}\label{prop:unique_contact_triple}
	Let $\ell_{a,b,c}$ be an affine underestimator of $f(x,y)=x \cdot y$ on $\polygon$ whose contact set consists of exactly three points $\mathbf{p}_1,\mathbf{p}_2,\mathbf{p}_3$ that do not lie on a common line. Then $\ell_{a,b,c}$ is the unique optimal affine underestimator at every point
	\[
	\mathbf{z}\in\polygon\cap\operatorname{int}\bigl(\operatorname{conv}\{\mathbf{p}_1,\mathbf{p}_2,\mathbf{p}_3\}\bigr).
	\]
	Thus, if each contact point is either a vertex or lies in the relative interior of a positive-slope edge, the three corresponding generators form the unique generator set $J$ satisfying the KKT system at $\mathbf{z}$ under the convention that selected edge generators have contact points in their relative interiors.
\end{proposition}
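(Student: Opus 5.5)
The proof has two parts: optimality of $\ell_{a,b,c}$ at $\mathbf{z}$, then uniqueness, from which the statement about generator sets follows.

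\emph{Optimality.} Since $\mathbf{z}\in\operatorname{int}(\operatorname{conv}\{\mathbf{p}_1,\mathbf{p}_2,\mathbf{p}_3\})$ and the three points are affinely independent, there are unique barycentric weights $\lambda_1,\lambda_2,\lambda_3>0$ with $\sum_i\lambda_i=1$ and $\mathbf{z}=\sum_i\lambda_i\mathbf{p}_i$. Let $\ell=\ell_{a',b',c'}$ be any affine underestimator of $f$ on $\polygon$. Because each $\mathbf{p}_i\in\polygon$,
\[
\ell(\mathbf{z})=\sum_i\lambda_i\ell(\mathbf{p}_i)\le\sum_i\lambda_i f(\mathbf{p}_i)=\sum_i\lambda_i\ell_{a,b,c}(\mathbf{p}_i)=\ell_{a,b,c}(\mathbf{z}).
\]
By \eqref{eq:def_conv}, this shows $\ell_{a,b,c}(\mathbf{z})=\conv_{f,\polygon}(\mathbf{z})$, so $\ell_{a,b,c}$ is optimal at $\mathbf{z}$.

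\emph{Uniqueness.} Suppose $\ell$ is also optimal at $\mathbf{z}$. Then the chain above holds with equality. Every $\lambda_i$ is strictly positive and every term satisfies $\ell(\mathbf{p}_i)\le f(\mathbf{p}_i)$, so each term must be an equality: $\ell(\mathbf{p}_i)=f(\mathbf{p}_i)=\ell_{a,b,c}(\mathbf{p}_i)$ for $i=1,2,3$. Two affine functions on $\R^2$ that agree on three affinely independent points are identical, hence $\ell=\ell_{a,b,c}$. Note that this part uses only that the $\mathbf{p}_i$ are contact points; the hypothesis that there are \emph{exactly} three is needed only for the KKT statement.

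\emph{KKT statement.} Let $J$ be any generator set such that $\mathrm{KKT}[\mathbf{z},J]$ is feasible with multipliers $(\boldsymbol{\alpha},c,\lambda)$. By the equivalence of the KKT system with optimality in the convex program \eqref{eq:convex_program}, the pair $(\boldsymbol{\alpha},c)$ is optimal there. It therefore defines an optimal affine underestimator at $\mathbf{z}$, which by uniqueness equals $\ell_{a,b,c}$.

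For each $g\in J$ we have $\lambda_g>0$ by Carathéodory and the minimality of $J$, and complementary slackness gives $\eta_g(\boldsymbol{\alpha})=c$. Hence $\mathbf{x}_g(\boldsymbol{\alpha})$ is a contact point of $\ell_{a,b,c}$, so it lies in $\{\mathbf{p}_1,\mathbf{p}_2,\mathbf{p}_3\}$. Under the stated convention this contact point is either the vertex $g$ itself or lies in $\operatorname{relint}(g)$ for a positive-slope edge $g$. Each $\mathbf{p}_i$ determines its generator uniquely: a vertex is not in the relative interior of any edge, and relative interiors of distinct edges are disjoint because $\polygon$ is simple. So $J\subseteq\{g_1,g_2,g_3\}$, where $g_i$ is the generator carrying $\mathbf{p}_i$.

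Finally, $\mathbf{z}=\sum_{g\in J}\lambda_g\mathbf{x}_g(\boldsymbol{\alpha})$ is a convex combination of the $\mathbf{p}_i$ corresponding to $J$. By uniqueness of barycentric coordinates, all three $\lambda_i$ are positive, so every $g_i$ must appear in $J$ and $J=\{g_1,g_2,g_3\}$. For existence, this $J$ with weights $\lambda_i$ and the parameters of $\ell_{a,b,c}$ satisfies the system: the inequality constraints $\eta_g\le\eta_r$ hold because $\ell_{a,b,c}$ is an underestimator, and the active-edge condition $\boldsymbol{\alpha}\in\operatorname{int}(D_e)$ holds because contacts on edges lie in relative interiors.

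\emph{Main obstacle.} The delicate step is the bookkeeping that each contact point corresponds to exactly one generator, and that an edge generator in a KKT solution sits at an interior contact (\Cref{prop:kkt_index_reductions}). In particular one must check that an edge cannot contribute a contact point other than the $\mathbf{p}_i$; this is exactly where the hypothesis that the contact set has exactly three points is used.
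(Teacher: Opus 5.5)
Your optimality and uniqueness argument is exactly the paper's proof (positive barycentric weights, the chain $\tilde\ell(\mathbf{z})\le\ell_{a,b,c}(\mathbf{z})$, and forced equality at three affinely independent contact points). The paper stops there and treats the generator-set claim as immediate, so your KKT paragraph is a correct elaboration, with one small fix: you have not justified minimality of $J$, but you do not need it, since $\eta_g(\boldsymbol{\alpha})=c$ for every $g\in J$ is imposed directly by the equality constraints of $\mathrm{KKT}[x,y,J]$, and your barycentric argument goes through even if some $\lambda_g=0$.
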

\begin{proof}
	Write $\mathbf{z}=\sum_{i=1}^3\lambda_i\mathbf{p}_i$, where $\lambda_i>0$ and $\sum_{i=1}^3\lambda_i=1$. For any affine underestimator $\tilde\ell$ on $\polygon$,
	\begin{equation}\label{eq:unique_underestimator}
	\tilde\ell(\mathbf{z})
	=\sum_{i=1}^3\lambda_i\tilde\ell(\mathbf{p}_i)
	\leq\sum_{i=1}^3\lambda_i f(\mathbf{p}_i)
	= \sum_{i=1}^3 \lambda_i \ell_{a,b,c}(\mathbf{p}_i)
	=\ell_{a,b,c}(\mathbf{z}).
	\end{equation}
	Thus, $\ell_{a,b,c}$ is optimal at $\mathbf{z}$. Now suppose that $\tilde\ell$ is also an optimal affine underestimator at $\mathbf{z}$. Then $\tilde\ell(\mathbf{z}) = \ell_{a,b,c}(\mathbf{z})$, so equality holds throughout \eqref{eq:unique_underestimator}. Since $\tilde\ell(\mathbf{p}_i) \le f(\mathbf{p}_i)$ and $\ell_{a,b,c}(\mathbf{p}_i) \le f(\mathbf{p}_i)$, the strict positivity of the multipliers $\lambda_i$ implies that $\tilde\ell(\mathbf{p}_i) = f(\mathbf{p}_i) = \ell_{a,b,c}(\mathbf{p}_i)$ for all $i$. Therefore, $\tilde\ell = \ell_{a,b,c}$, since an affine function on $\mathbb{R}^2$ is uniquely determined by its values at three points that do not lie on a common line. Hence, $\ell_{a,b,c}$ is the unique optimal affine underestimator at $\mathbf{z}$, which concludes the proof.
\end{proof}

A similar complication arises for generator sets of cardinality two. Unlike in the polytope case, the sets $\bar{E}^l(\polygon)\cup\bar{E}^u(\polygon)$ do not necessarily contain all positive-slope edges of a nonconvex polygon, particularly its inner edges. Consequently, pairs involving such edges cannot be excluded using the same geometric arguments as for lower and upper edges of a polytope.

Nevertheless, certain combinations can also be excluded for nonconvex
polygons, as formalized in the following proposition.

\begin{proposition}\label{prop:exclude_same_line}
	Let $J\in\mathcal{J}$. We can exclude $J$ if it contains distinct
	generators $g,h$ that are contained in the same line and at least one of them is
	an edge of $\polygon$.
\end{proposition}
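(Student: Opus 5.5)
Let $g,h\in J$ be distinct generators lying on a common line $L$, with $e\coloneqq g$ an edge, so that $e$ is a positive-slope edge in $E'(\polygon)$. The plan is to show that a KKT solution for $J$ either cannot exist or can be replaced by one whose generator set omits one of $g,h$. Either way, $J$ is redundant in the enumeration.

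\emph{Step 1: reduce to $h$ being a vertex.} Since $L\supseteq e$ has positive slope and $h\subseteq L$, the generator $h$ is either a vertex on $L$ or another positive-slope edge on $L$. By \Cref{prop:kkt_index_reductions}, we may assume $\boldsymbol{\alpha}\in\operatorname{int}(D_e)$, and likewise for $h$ if it is an edge. The active contact points $\mathbf{p}=\mathbf{x}_e(\boldsymbol{\alpha})\in\operatorname{relint}(e)$ and $\mathbf{q}=\mathbf{x}_h(\boldsymbol{\alpha})$ are then two distinct points of $C(a,b,c)$ on $L$. They are distinct because $\mathbf{p}\in\operatorname{relint}(e)$, while $\mathbf{q}$ lies in $h\neq e$, and two distinct edges or an edge and a vertex not in its relative interior meet $\operatorname{relint}(e)$ nowhere.

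\emph{Step 2: contradiction via restriction to $L$.} Parametrize $L$ by $x$ and set $\phi(x)\coloneqq\delta_{a,b,c}(x,m_ex+q_e)$. This is a strictly convex quadratic with leading coefficient $m_e>0$. At the contact point $\mathbf{p}$, which is interior to $e\subseteq\polygon$, we have $\phi(p_x)=0$ and $\phi\geq0$ on a neighbourhood of $p_x$. Hence $p_x$ is a local minimum with $\phi'(p_x)=0$. Strict convexity then forces $\phi>0$ everywhere on $L$ except at $p_x$. In particular $\phi(q_x)>0$, contradicting $\mathbf{q}\in C(a,b,c)$. So no KKT solution exists with both $g$ and $h$ active under the convention of \Cref{prop:kkt_index_reductions}, and $J$ can be excluded.

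This argument needs neither convexity of $\polygon$ nor \Cref{prop:positive_slope_contacts}; it only uses that the contact on $e$ is an interior tangency. Equivalently, it suffices that the multipliers force $\eta_e(\boldsymbol{\alpha})=\eta_h(\boldsymbol{\alpha})$, and $\eta_e$ is the minimum of $\phi$ over $e$, which equals the unique global minimum of $\phi$ on $L$.

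\emph{Main obstacle.} The hard part is justifying that we may work under the convention $\boldsymbol{\alpha}\in\operatorname{int}(D_e)$, and correctly handling the case where $h$ is an endpoint of $e$. In that case $\operatorname{conv}(g\cup h)=e$ is one-dimensional, but $J$ may still have a third generator. I would treat this carefully by noting that if $\boldsymbol{\alpha}\in\partial D_e$, then $\mathbf{x}_e(\boldsymbol{\alpha})$ is an endpoint vertex. In that case the solution is represented by the generator set with $e$ replaced by that vertex (or merged with $h$), as permitted by \Cref{prop:kkt_index_reductions}. The multipliers of $e$ and $h$ are then combined accordingly, so the KKT region $\Gamma_J$ is covered by other generator sets.
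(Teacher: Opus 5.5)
Your proof is correct and is essentially the paper's argument: restrict $\delta_{a,b,c}$ to the supporting line of $e$, observe it is a strictly convex quadratic with leading coefficient $m_e>0$ that is nonnegative on $e$ and vanishes at a relative-interior point, so that point is its unique zero and no second contact point can lie on the line. Your extra checks only make explicit what the paper's short proof leaves implicit, namely that the two contact points are distinct because the polygon is simple, and that the $\operatorname{int}(D_e)$ convention from \Cref{prop:kkt_index_reductions} applies.
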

\begin{proof}
Consider an affine underestimator $\ell_{a,b,c}$ and a positive-slope
edge $e$ with a contact point in its relative interior. Let
$y=m_e x+q_e$ be the supporting line of $e$. The restriction of
$\delta_{a,b,c}$ to this line is a strictly convex quadratic function.
Since it is nonnegative on $e$ and vanishes at a point in its relative interior,
this point is its unique minimizer, with minimum value zero.
Hence, there can be no other contact point on the supporting line.
\end{proof}
This restriction can also be seen geometrically in the middle panel of \Cref{fig:hyperbola-cases}: a positive-slope edge with a contact point in its relative interior is tangent to the hyperbola, so its supporting line has no second contact point.

The preceding results identify several generator sets that can be discarded. For the remaining sets, one must still determine the regions on which their KKT solutions are feasible. This step highlights another difference between the polytope and nonconvex polygon cases:

\begin{remark}
	For polytopes, Locatelli simplifies the cell descriptions associated with two-element generator sets by eliminating the inequalities $\eta_g(\boldsymbol{\alpha})\le \eta_r(\boldsymbol{\alpha})$ for $g\in J$ and $r\in G(\polygon)\setminus J$. In the vertex--edge case, these inequalities can be replaced by a small number of linear inequalities, while in the edge--edge case they can be omitted altogether; see \autocite[Appendix~A]{locatelli2018convex}. The arguments used to establish these simplifications rely on the structure of polytopes and do not readily carry over to nonconvex polygons. We therefore retain the $\eta$-inequalities in our polygonal setting and handle them explicitly.
\end{remark}

\subsection{Case distinction for remaining generator sets}
We now distinguish the remaining generator sets by their numbers of
vertices and edges and derive the corresponding KKT regions and
convex envelope expressions.

\subsubsection{Generator sets with three elements}

For a fixed generator set $J$, let $s$ index the feasible candidates. Each candidate determines an affine function $\ell_s(x,y)=a_s^\ast x+b_s^\ast y+c_s^\ast$ and an associated KKT region $\Gamma_J^{(s)}$ on which the convex envelope coincides with $\ell_s$. The full KKT region associated with $J$ is
\[
\Gamma_J=\bigcup_s\Gamma_J^{(s)}.
\]
The affine functions associated with different candidates need not coincide.

\subsubsection*{Three vertices}
\label{sec:kkt_three_vertices}
This case is analogous to \autocite[Appendix~A.1]{locatelli2018convex}. By \Cref{prop:kkt_index_reductions}, triples of collinear vertices can be excluded because their convex hull has dimension less than two.
Assume that $J=\{v_1,v_2,v_3\}$, where the three vertices do not lie on a common line. They uniquely determine an affine function $\ell(x,y)=ax+by+c$ satisfying $\ell(v_i)=f(v_i)$ for $i=1,2,3$. If $\eta_{v_1}(a,b)\leq\eta_g(a,b)$ for every $g\in G(\polygon)\setminus J$, then $\ell$ is an affine underestimator of $f$ on $\polygon$. In this case,
\[
\Gamma_J
=\polygon\cap\operatorname{conv}\{v_1,v_2,v_3\},
\qquad
\conv_{f,\polygon}(x,y)=\ell(x,y)
\quad\text{for all }(x,y)\in\Gamma_J.
\]

\subsubsection*{Two vertices -- one edge}
\label{sec:kkt_two_vertices_one_edge}
By \Cref{prop:exclude_same_line}, triples in which at least one of the two vertices lies on the supporting line of the edge can be excluded. Assume therefore
that $J=\{v_1,v_2,e\}$, where neither vertex lies on this line.
Introduce the substitution $z\coloneqq a+m_e b$. For
$(a,b)\in\operatorname{int}(D_e)$, a straightforward calculation shows that
\[
\eta_e(a,b)=\eta_{v_2}(a,b)
\quad\Longleftrightarrow\quad
b=\beta_2z^2+\beta_1z+\beta_0,
\]
where, for a vertex--edge pair $(v,e)$, the coefficients are given by
\begin{equation}\label{eq:vertex_edge_beta_coefficients}
\beta_2=-\frac{1}{C},
\qquad
\beta_1=\frac{2q_e+4m_e v_x}{C},
\qquad
\beta_0=-\frac{q_e^2+4m_e v_xv_y}{C},
\qquad
C\coloneqq4m_e(q_e+m_e v_x-v_y).
\end{equation}
Here, we use $v=v_2$; see
\autocite[Section~5.2]{locatelli2016_polyhedral}.
Substituting $a=z-m_e b$ and then $b=\beta_2z^2+\beta_1z+\beta_0$ into
$\eta_{v_1}(a,b)=\eta_{v_2}(a,b)$ yields a univariate polynomial
equation of degree at most two in $z$.
Its real roots give at most two candidate pairs
$(a_s^\ast,b_s^\ast)$, with
\[
c_s^\ast
\coloneqq v_{1,x}v_{1,y}
-a_s^\ast v_{1,x}-b_s^\ast v_{1,y}.
\]

For each candidate $s$, we check whether
\[
(a_s^\ast,b_s^\ast)\in\operatorname{int}(D_e)
\quad\text{and}\quad
\eta_{v_1}(a_s^\ast,b_s^\ast)
\leq\eta_g(a_s^\ast,b_s^\ast)
\quad\text{for every }g\in G(\polygon)\setminus J.
\]
If both conditions hold, its associated KKT region is
\[
\Gamma_J^{(s)}
=\polygon\cap\operatorname{conv}
\{v_1,v_2,\mathbf{x}_e(a_s^\ast,b_s^\ast)\},
\]
and
\[
\conv_{f,\polygon}(x,y)=a_s^\ast x+b_s^\ast y+c_s^\ast
\quad\text{for all }(x,y)\in\Gamma_J^{(s)}.
\]

\subsubsection*{One vertex -- two edges}
\label{sec:kkt_one_vertex_two_edges}
By \Cref{prop:exclude_same_line}, triples in which the vertex lies
on the supporting line of either edge, or both edges lie on the
same line, can be excluded. Assume therefore that
$J=\{v,e_1,e_2\}$ satisfies neither exclusion condition.

As in the preceding case, introduce the substitution
$z\coloneqq a+m_{e_1}b$. For
$(a,b)\in\operatorname{int}(D_{e_1})$, we obtain
\[
\eta_{e_1}(a,b)=\eta_v(a,b)
\quad\Longleftrightarrow\quad
b=\beta_2z^2+\beta_1z+\beta_0,
\]
where the coefficients $\beta_0,\beta_1,\beta_2$ are given by
\eqref{eq:vertex_edge_beta_coefficients} for the pair $(v,e_1)$.
Substituting $a=z-m_{e_1}b$ and then
$b=\beta_2z^2+\beta_1z+\beta_0$ into
$\eta_v(a,b)=\eta_{e_2}(a,b)$, using the expression for
$\eta_{e_2}$ on $\operatorname{int}(D_{e_2})$, yields a
univariate polynomial equation of degree at most four in $z$.
Its real roots give at most four candidate pairs
$(a_s^\ast,b_s^\ast)$, with
\[
c_s^\ast
\coloneqq v_xv_y-a_s^\ast v_x-b_s^\ast v_y.
\]

For each candidate $s$, we check whether
\[
(a_s^\ast,b_s^\ast)
\in\operatorname{int}(D_{e_1})\cap\operatorname{int}(D_{e_2})
\quad\text{and}\quad
\eta_v(a_s^\ast,b_s^\ast)
\leq\eta_g(a_s^\ast,b_s^\ast)
\quad\text{for every }g\in G(\polygon)\setminus J.
\]
If both conditions hold, its associated
KKT region is
\[
\Gamma_J^{(s)}
=\polygon\cap\operatorname{conv}
\{v,\mathbf{x}_{e_1}(a_s^\ast,b_s^\ast),
     \mathbf{x}_{e_2}(a_s^\ast,b_s^\ast)\},
\]
and
\[
\conv_{f,\polygon}(x,y)=a_s^\ast x+b_s^\ast y+c_s^\ast
\quad\text{for all }(x,y)\in\Gamma_J^{(s)}.
\]

\subsubsection*{Three edges}
\label{sec:kkt_three_edges}
By \Cref{prop:exclude_same_line}, triples containing two edges on the same supporting line can be excluded. Assume therefore that $J=\{e_1,e_2,e_3\}$ consists of positive-slope edges with pairwise distinct supporting lines.

For $(a,b)\in\operatorname{int}(D_{e_1})\cap\operatorname{int}(D_{e_2})$, the equality $\eta_{e_1}(a,b)=\eta_{e_2}(a,b)$ reduces to one or two linear relations between $a$ and $b$; see \autocite[Lemma~5.1]{locatelli2016_polyhedral}. If $m_{e_1}=m_{e_2}$, we obtain
\begin{equation}\label{eq:parallel_edge_relation}
\eta_{e_1}(a,b)=\eta_{e_2}(a,b)
\quad\Longleftrightarrow\quad
a=m_{e_1}b+\frac{q_{e_1}+q_{e_2}}{2}.
\end{equation}
Otherwise, the equality is equivalent to the union of the two lines
\begin{equation}\label{eq:nonparallel_edge_relations}
a=\frac{q_{e_2}m_{e_1}-q_{e_1}m_{e_2}
\pm\sqrt{m_{e_1}m_{e_2}}\bigl((m_{e_1}-m_{e_2})b+q_{e_1}-q_{e_2}\bigr)}
{m_{e_1}-m_{e_2}}.
\end{equation}
For each resulting relation, we substitute the expression for $a$ into $\eta_{e_2}(a,b)=\eta_{e_3}(a,b)$, using the expressions for these functions on $\operatorname{int}(D_{e_2})\cap\operatorname{int}(D_{e_3})$. This yields a univariate polynomial equation of degree at most two in $b$. Its real roots determine candidate pairs $(a_s^\ast,b_s^\ast)$. Thus, we obtain at most two candidates if $m_{e_1}=m_{e_2}$ and at most four otherwise. For each candidate, set
\[
c_s^\ast\coloneqq
-\frac{(a_s^\ast+m_{e_1}b_s^\ast-q_{e_1})^2}{4m_{e_1}}
-b_s^\ast q_{e_1}.
\]

For each candidate $s$, we check whether
\[
(a_s^\ast,b_s^\ast)\in\bigcap_{i=1}^3\operatorname{int}(D_{e_i})
\quad\text{and}\quad
\eta_{e_1}(a_s^\ast,b_s^\ast)
\leq\eta_g(a_s^\ast,b_s^\ast)
\quad\text{for every }g\in G(\polygon)\setminus J.
\]
If both conditions hold, its associated KKT region is
\[
\Gamma_J^{(s)}
=\polygon\cap\operatorname{conv}
\{\mathbf{x}_{e_1}(a_s^\ast,b_s^\ast),
  \mathbf{x}_{e_2}(a_s^\ast,b_s^\ast),
  \mathbf{x}_{e_3}(a_s^\ast,b_s^\ast)\},
\]
and
\[
\conv_{f,\polygon}(x,y)=a_s^\ast x+b_s^\ast y+c_s^\ast
\quad\text{for all }(x,y)\in\Gamma_J^{(s)}.
\]

\subsubsection{Generator sets with two elements}

\subsubsection*{One vertex -- one edge}
\label{sec:kkt_one_vertex_one_edge}
By \Cref{prop:exclude_same_line}, pairs in which the vertex lies
on the supporting line of the edge can be excluded. Assume
therefore that $J=\{v,e\}$, where $v$ does not lie on the
supporting line $y=m_e x+q_e$ of the positive-slope edge $e$.

Following \autocite[Appendix~A.2]{locatelli2018convex}, we first
derive a candidate envelope function for a given point $(x,y)$
from the KKT equations. Using the contact point
$\mathbf{x}_e(a,b)=(x_e(a,b),y_e(a,b))$ on $e$, where
$y_e(a,b)=m_e x_e(a,b)+q_e$, the convex combination equations are
\[
(x,y)=(1-\lambda)v+\lambda\mathbf{x}_e(a,b),
\qquad 0\leq\lambda\leq 1.
\]
For $(x,y)\neq v$, any such representation has $\lambda>0$.
Writing $h(x,y)\coloneqq m_e(x-v_x)+v_y-y$, we obtain
the multiplier and the $x$-coordinate of the candidate contact point as
\[
\lambda(x,y)=\frac{h(x,y)}{v_y-q_e-m_e v_x},
\qquad
x_e(x,y)=\frac{x(v_y-q_e)-v_x(y-q_e)}{h(x,y)},
\]
where the notation $x_e(x,y)$ emphasizes that the contact coordinate
is now expressed directly in terms of $(x,y)$ rather than $(a,b)$. 
The denominator of $\lambda(x, y)$ is nonzero because $v$ does not
lie on the supporting line of $e$; consequently, $h(x,y)$ is
also nonzero whenever $\lambda(x, y)>0$.
For a contact point in the relative interior of $e$, we have
$a = 2m_e x_e(a,b)+q_e - m_e b$. Combining this relation with
$\eta_e(a,b)=\eta_v(a,b)$ gives the candidate coefficients
\begin{align*}
b^\ast(x,y)
&=\frac{m_e\bigl(x_e(x,y)\bigr)^2
-2m_e v_x x_e(x,y)-q_e v_x+v_xv_y}
{v_y-m_e v_x-q_e},\\
a^\ast(x,y)
&=2m_e x_e(x,y)+q_e-m_e b^\ast(x,y),\\
c^\ast(x,y)
&=v_xv_y-a^\ast(x,y)v_x-b^\ast(x,y)v_y.
\end{align*}
The resulting candidate envelope function is
\[
F_J(x,y)\coloneqq a^\ast(x,y)x+b^\ast(x,y)y+c^\ast(x,y).
\]
Unlike in the three-generator cases, the coefficients of the
optimal affine underestimator generally depend on $(x,y)$.

It remains to determine where these candidates satisfy the
KKT inequalities. To compute the feasible parameter intervals,
we follow the reduction to univariate polynomial inequalities
described in \autocite[Section~5.2]{locatelli2016_polyhedral}.
As in the cases of two vertices and one edge
or one vertex and two edges, introduce $z\coloneqq a+m_e b$.
For $(a,b)\in\operatorname{int}(D_e)$, we obtain
\[
\eta_e(a,b)=\eta_v(a,b)
\quad\Longleftrightarrow\quad
b=\beta_2z^2+\beta_1z+\beta_0,
\]
where the coefficients are given by
\eqref{eq:vertex_edge_beta_coefficients} for the pair $(v,e)$.
Writing $b(z)=\beta_2z^2+\beta_1z+\beta_0$ and
$a(z)=z-m_e b(z)$, we determine the parameter values satisfying
\[
(a(z),b(z))\in\operatorname{int}(D_e)
\quad\text{and}\quad
\eta_v(a(z),b(z))
\leq\eta_g(a(z),b(z))
\quad\text{for every }g\in G(\polygon)\setminus J.
\]
For each edge $e'\in E'(\polygon)\setminus J$, we distinguish
the three domains $D_{e'}^-$, $D_{e'}$, and $D_{e'}^+$ on which
$\eta_{e'}$ has a fixed expression. For every combination
of these domains, substituting $a(z)$ and $b(z)$ yields
a system of univariate polynomial inequalities of degree
at most four in $z$. Solving these systems gives the feasible
parameter set as a finite union of intervals $I_s$.
For each interval $I_s$, the associated KKT region
is characterized by
\[
\Gamma_J^{(s)}
=\left\{(x,y)\in\polygon:
0<\lambda(x,y)\leq 1,\quad
z^\ast(x,y)\in I_s\right\}.
\]
Indeed, the condition on $\lambda$ places $(x,y)$ on the segment
between $v$ and the contact point with the edge, while the condition on
$z^\ast$ ensures feasibility of the corresponding underestimator.
The $\lambda$-inequalities as well as the interval bounds on $z^\ast$ can be converted into linear
inequalities in $(x,y)$, as follows:
Set
\[
d\coloneqq v_y-q_e-m_e v_x,
\qquad
N(x,y)\coloneqq
2m_e\bigl(x(v_y-q_e)-v_x(y-q_e)\bigr)+q_e h(x,y),
\]
so that
\[
\lambda(x,y)=h(x,y)/d,
\qquad
z^\ast(x,y)=a^\ast(x,y)+m_e b^\ast(x,y)=2m_e x_e(x,y)+q_e=N(x,y)/h(x,y).
\]
For a closed interval $I_s=[L_s,U_s]$, this gives
\[
\Gamma_J^{(s)}=
\begin{cases}
\left\{(x,y)\in\polygon:
\begin{aligned}
&0<h(x,y)\leq d,\\
&L_s h(x,y)\leq N(x,y)\leq U_s h(x,y)
\end{aligned}\right\}, & d>0,\\[1ex]
\left\{(x,y)\in\polygon:
\begin{aligned}
&d\leq h(x,y)<0,\\
&U_s h(x,y)\leq N(x,y)\leq L_s h(x,y)
\end{aligned}\right\}, & d<0.
\end{cases}
\]
If $I_s$ is an open or half-open interval, the inequalities
corresponding to its excluded endpoints become strict.
All displayed constraints are linear in $(x,y)$.
Thus, if at least one feasible parameter interval exists, then
\[
\Gamma_J=\{v\}\cup\bigcup_s\Gamma_J^{(s)}, \text{ }
\conv_{f,\polygon}(x,y)=F_J(x,y)
\text{ for all }(x,y)\in\Gamma_J\setminus\{v\}, \text{ and } \conv_{f,\polygon}(v)=v_x v_y.
\]

\subsubsection*{Two edges}
\label{sec:kkt_two_edges}
By \Cref{prop:exclude_same_line}, pairs of edges on the same
supporting line can be excluded. Assume therefore that
$J=\{e_1,e_2\}$ consists of positive-slope edges with distinct
supporting lines.

As in the three-edge case, for
$(a,b)\in\operatorname{int}(D_{e_1})\cap\operatorname{int}(D_{e_2})$,
the equality $\eta_{e_1}(a,b)=\eta_{e_2}(a,b)$ reduces to one
or two linear relations given by
\eqref{eq:parallel_edge_relation} and \eqref{eq:nonparallel_edge_relations}.
Index these relations by $\tau$ and write
$a_\tau(b)=\beta_{1,\tau}b+\beta_{0,\tau}$.
For ease of notation, we fix one relation and suppress
the index $\tau$ in the following derivation.
For parallel edges,
$\beta_1=m_{e_1}=m_{e_2}$ and
$\beta_0=(q_{e_1}+q_{e_2})/2$; otherwise,
$\beta_1=\pm\sqrt{m_{e_1}m_{e_2}}$ and $\beta_0$ is given by
the corresponding relation in \eqref{eq:nonparallel_edge_relations}.

Following \autocite[Appendix~A.3]{locatelli2018convex}, we first derive a candidate envelope function for a given point $(x, y)$ from the KKT equations. The convex combination
equations are
\[
(x,y)=(1-\lambda)\mathbf{x}_{e_1}(a,b)
+\lambda\mathbf{x}_{e_2}(a,b),
\qquad 0\leq\lambda\leq1.
\]
Differentiating $\eta_{e_1}(a(b),b)=\eta_{e_2}(a(b),b)$
with respect to $b$ yields
\begin{equation}\label{eq:contact_point_relation}
y_{e_1}(a,b)+\beta_1 x_{e_1}(a,b)
=y_{e_2}(a,b)+\beta_1 x_{e_2}(a,b).
\end{equation}
Since $(x,y)$ is a convex combination of the two contact
points, it satisfies
$y+\beta_1x=y_{e_i}(a,b)+\beta_1x_{e_i}(a,b)$ for $i=1,2$.
With $y_{e_i}(x,y)=m_{e_i}x_{e_i}(x,y)+q_{e_i}$ we obtain
\[
x_{e_i}(x,y)=\frac{y+\beta_1x-q_{e_i}}{m_{e_i}+\beta_1},
\qquad i=1,2.
\]
The denominators are nonzero for all relations considered above.
Using $a(b)+m_{e_1}b=2m_{e_1}x_{e_1}(x, y)+q_{e_1}$, we have
\begin{align*}
b^\ast(x,y)
&=\frac{2m_{e_1}x_{e_1}(x,y)+q_{e_1}-\beta_0}
{m_{e_1}+\beta_1},\\
a^\ast(x,y)&=\beta_1b^\ast(x,y)+\beta_0,\\
c^\ast(x,y)&=-m_{e_1}\bigl(x_{e_1}(x,y)\bigr)^2
-q_{e_1}b^\ast(x,y).
\end{align*}
Here, the expression for $c^\ast$ follows from
$c^\ast=\eta_{e_1}(a^\ast,b^\ast)$ and
\autocite[equation~(22)]{locatelli2018convex}.
Thus, the candidate envelope function for relation $\tau$ is
$F_J^{(\tau)}(x,y)=a^\ast(x,y)x+b^\ast(x,y)y+c^\ast(x,y)$.

To determine the associated KKT region, we follow
\autocite[Section~5.1]{locatelli2016_polyhedral}.
Substituting $a(b)=\beta_1b+\beta_0$ into
\[
(a(b),b)\in\operatorname{int}(D_{e_1})
\cap\operatorname{int}(D_{e_2})
\quad\text{and}\quad
\eta_{e_1}(a(b),b)\leq\eta_g(a(b),b)
\quad\text{for every }g\in G(\polygon)\setminus J
\]
and distinguishing the domains of all remaining edges as in
the preceding case yields univariate polynomial inequalities
of degree at most two in $b$. Solving these systems gives
feasible parameter intervals $I_{\tau,s}$, where $s$ indexes the
intervals obtained for the fixed relation $\tau$.

Writing $\Delta(x,y)\coloneqq x_{e_2}(x,y)-x_{e_1}(x,y)$, the convex
combination multiplier is
\[
\lambda(x,y)=\frac{x-x_{e_1}(x,y)}{\Delta(x,y)}.
\]
We have $\Delta(x,y)\neq 0$, since otherwise
\eqref{eq:contact_point_relation} would imply that the two
contact points coincide. This is impossible because they
lie in the relative interiors of distinct edges of a
simple polygon. The region associated with $I_{\tau,s}$ is
\[
\Gamma_J^{(\tau,s)}
=\left\{(x,y)\in\polygon:
0\leq\lambda(x,y)\leq1,\quad b^\ast(x,y)\in I_{\tau,s}\right\}.
\]
For a closed interval $I_{\tau,s}=[L_{\tau,s},U_{\tau,s}]$,
distinguishing the sign of $\Delta(x,y)$ gives
\[
\begin{aligned}
\Gamma_J^{(\tau,s)}
={}&\left\{(x,y)\in\polygon:
\begin{aligned}
&\Delta(x,y)>0,\\
&0\leq x-x_{e_1}(x,y)\leq\Delta(x,y),\\
&L_{\tau,s}\leq b^\ast(x,y)\leq U_{\tau,s}
\end{aligned}\right\}\\[1ex]
&\cup\left\{(x,y)\in\polygon:
\begin{aligned}
&\Delta(x,y)<0,\\
&\Delta(x,y)\leq x-x_{e_1}(x,y)\leq0,\\
&L_{\tau,s}\leq b^\ast(x,y)\leq U_{\tau,s}
\end{aligned}\right\}.
\end{aligned}
\]
If $I_{\tau,s}$ is an open or half-open interval, the inequalities
corresponding to its excluded endpoints become strict.
All displayed constraints are linear in $(x,y)$.
Finally, for all $\tau$ we have
\[
\Gamma_J^{(\tau)}=\bigcup_s\Gamma_J^{(\tau,s)},
\qquad
\conv_{f,\polygon}(x,y)=F_J^{(\tau)}(x,y)
\quad\text{for all }(x,y)\in\Gamma_J^{(\tau)}.
\]
and we can set $\Gamma_J = \bigcup_\tau \Gamma_J^{(\tau)}$. 

\subsection{Overall algorithm}

\Cref{alg:kkt_polygons} combines the preceding cases into an overall
procedure. For each remaining generator set $J$, the corresponding
case determines the feasible KKT regions and the associated
expressions for the convex envelope. Each returned pair $(\Gamma,F)$ satisfies $\conv_{f,\polygon}=F$ on $\Gamma$.
Values at points of $\polygon$ not covered by the returned regions
are recovered by continuous extension. The three subroutines are given in \Cref{app:kkt_algorithm}.

\begin{algorithm}[H]
\caption{KKT-based computation of the convex envelope of $f(x,y)=x \cdot y$ over polygons}
\label{alg:kkt_polygons}
\small
\algrenewcommand{\algorithmicrequire}{\textbf{Input:}}
\algrenewcommand{\algorithmicensure}{\textbf{Output:}}
\begin{algorithmic}[1]
\Require A simple polygon $\polygon$
\Ensure A collection $\mathcal{C}$ of KKT regions and associated envelope functions
\State $G(\polygon)\gets V(\polygon)\cup E'(\polygon)$
\State $\mathcal{J}\gets\{J\subseteq G(\polygon):2\leq |J|\leq3\}$
\State Discard the sets excluded by
\Statex \hspace{\algorithmicindent}\Cref{prop:kkt_index_reductions,prop:positive_slope_contacts,prop:exclude_same_line}
\State $\mathcal{C}\gets\varnothing$
\For{each $J\in\mathcal{J}$}
    \If{$|J|=3$}
        \State $\mathcal{C}\gets\mathcal{C}\cup\Call{ThreeGenerators}{\polygon,G(\polygon),J}$ \Comment{\Cref{alg:kkt_three_generators}}
    \ElsIf{$J=\{v,e\}$}
        \State $\mathcal{C}\gets\mathcal{C}\cup\Call{VertexEdge}{\polygon,G(\polygon),J}$ \Comment{\Cref{alg:kkt_vertex_edge}}
    \Else \Comment{$J=\{e_1,e_2\}$}
        \State $\mathcal{C}\gets\mathcal{C}\cup\Call{TwoEdges}{\polygon,G(\polygon),J}$ \Comment{\Cref{alg:kkt_two_edges}}
    \EndIf
\EndFor
\State \Return $\{(\Gamma,F)\in\mathcal{C}:\Gamma\neq\varnothing\}$
\end{algorithmic}
\end{algorithm}

\subsection{Polyhedral subdivision}

We now establish the piecewise structure of the convex envelope,
using the notion of a polyhedral subdivision also for possibly
nonconvex polygons, as defined below.

\begin{definition}[{cf. \autocite[Definition~1.1]{locatelli2016_polyhedral}}]
\label{def:polyhedral_subdivision}
A \textit{polyhedral subdivision} of a polygon $\polygon$ is a finite
collection $\{P_i:i\in I\}$ such that
\begin{enumerate}[label=(\roman*)]
    \item each $P_i$ is a polytope;
    \item $\bigcup_{i\in I}P_i=\polygon$;
    \item for $i\neq j$, the intersection $P_i\cap P_j$ is a
    possibly empty face of both $P_i$ and $P_j$.
\end{enumerate}
The subdivision does not add vertices if $V(P_i)\subseteq V(\polygon)$
for every $i\in I$.
\end{definition}

\noindent The preceding KKT cases yield the following extension of
\autocite[Theorem~1.1]{locatelli2016_polyhedral} to polygons:

\begin{theorem}\label{thm:polyhedral_subdiv}
Let $\polygon$ be a simple polygon.
The convex envelope of $f(x,y)=x\cdot y$ over $\polygon$ admits a finite
polyhedral subdivision $\{P_i:i\in I\}$, possibly adding vertices.
On each cell $P_i$, it is affine, quadratic, or a quotient of a
quadratic polynomial and a nonconstant affine function.
Here, quadratic means of degree exactly two.
At boundary points where a denominator vanishes, the corresponding
expression is interpreted by continuous extension.
\end{theorem}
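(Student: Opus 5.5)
The plan is to assemble the theorem from the KKT case analysis of this section. The first step is a covering statement. Fix $\mathbf{x}\in\polygon$. Problem \eqref{eq:convex_program} is a continuously differentiable convex program, and its optimum is attained, since $\polygon$ is compact and the feasible affine underestimators form a closed set whose relevant part is bounded. We would check Slater's condition (take $c$ very negative), which makes the KKT conditions necessary. Carathéodory's theorem then gives a KKT solution with at most three positive multipliers. The reductions of \Cref{prop:kkt_index_reductions,prop:positive_slope_contacts,prop:exclude_same_line} show that some generator set $J$ surviving \Cref{alg:kkt_polygons} is active at $\mathbf{x}$, or else $\mathbf{x}$ is a vertex. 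Hence the finitely many regions $\Gamma_J^{(s)}$, $\Gamma_J^{(\tau,s)}$ together with the vertices cover $\polygon$. On each region $\conv_{f,\polygon}$ equals the expression derived in the corresponding case. Finiteness holds because $|G(\polygon)|$ is finite, each case has finitely many candidates (roots of polynomials of degree at most four), and each case has finitely many parameter intervals.

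The second step is to verify the functional forms. For three-generator sets the envelope is affine on $\Gamma_J^{(s)}=\polygon\cap\conv\{\cdot\}$. For $J=\{v,e\}$, substitute $x_e(x,y)=(\text{affine})/h(x,y)$ into $F_J$. Multiplying through by $h$, $F_J$ becomes a quadratic polynomial divided by the affine function $h$, provided the numerator does not factor out $h$. If it does, the expression collapses to an affine or quadratic one. For $J=\{e_1,e_2\}$, the contact coordinate $x_{e_i}(x,y)$ is affine in $(x,y)$, so $b^\ast,a^\ast$ are affine and $c^\ast$ is quadratic. Hence $F_J^{(\tau)}=a^\ast x+b^\ast y+c^\ast$ is a polynomial of degree at most two. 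We would classify it as quadratic (degree exactly two) or affine, after any degree drop. Each cell thus carries one of the three announced types, with continuous extension where $h$ vanishes on the boundary (only at $v$).

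The third step, which I expect to be the main obstacle, is turning the covering into a genuine polyhedral subdivision in the sense of \Cref{def:polyhedral_subdivision}. The regions are defined by linear inequalities intersected with $\polygon$. Since $\polygon$ is nonconvex, such an intersection need not be a polytope, and distinct regions may overlap or meet improperly. My approach is the following:
\begin{enumerate}[label=(\roman*)]
\item Triangulate $\polygon$ (possible for a simple polygon).
\item Intersect each triangle with each closed region $\overline{\Gamma}$. Each piece is then a polytope, because all defining constraints are linear in $(x,y)$ as shown in the case analysis.
\item Take the common refinement by the arrangement of all finitely many lines appearing in these descriptions, restricted to each triangle. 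This yields a finite cell complex whose cells intersect in common faces; this is where vertices are added.
\end{enumerate}
Each cell of the arrangement lies in the closure of some region, since the regions cover $\polygon$ and a cell not fully inside one region would be split by a defining line. By continuity of the convex envelope and of the expressions away from zero denominators, the envelope coincides on that cell with a single expression. Where two regions with different formulas overlap on a full-dimensional set, the formulas agree there, so either may be chosen. The delicate points I would check carefully are: that strict and half-open inequalities do not create lower-dimensional leftover pieces that fail to be absorbed by closure; that for $\{v,e\}$ cells the nonlinear-looking condition $0<h\le d$ really yields a polytope once closed; and that the cell-wise choice of formula is well defined, which uses the uniqueness from \Cref{prop:unique_contact_triple} and the continuity of $\conv_{f,\polygon}$ on $\polygon$.
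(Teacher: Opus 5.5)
Your overall route is the paper's: a KKT plus Carath\'eodory covering, refinement of a triangulation of $\polygon$ by all boundary lines of the regions, agreement of the formulas on overlaps because each equals the envelope, and continuity to close up. However, you skip the two verifications that make up most of the paper's proof, and without them two of your conclusions do not follow.

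\textbf{Finiteness.} The claim ``finitely many candidates as roots of polynomials of degree at most four'' requires that these polynomials do not vanish identically. Otherwise a whole curve of $(a,b)$ satisfies the $\eta$-equalities, and a three-generator set yields a continuum of affine pieces. The paper checks this case by case:
\begin{itemize}
\item For $\{v_1,v_2,e\}$ the equation becomes $\Delta_x z+(\Delta_y-m_e\Delta_x)B(z)=f(v_1)-f(v_2)$ with $\deg B=2$. It is trivial only if $v_1=v_2$.
\item For $\{v,e_1,e_2\}$ with distinct slopes, the quartic coefficient $(m_{e_2}-m_{e_1})^2\beta_2^2/(4m_{e_2})$ is nonzero. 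For parallel edges the equation reduces to $z-2m_{e_1}B(z)=(q_{e_1}+q_{e_2})/2$, whose quadratic coefficient $-2m_{e_1}\beta_2$ is nonzero.
\item For three edges the quadratic coefficient $(m_{e_3}-m_{e_1})(m_{e_3}-m_{e_2})/(4m_{e_3})$ can vanish only if all three edges are parallel. In that case the relations $a=mb+(q_{e_1}+q_{e_2})/2$ and $a=mb+(q_{e_2}+q_{e_3})/2$ are incompatible.
\end{itemize}

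\textbf{The vertex--edge form.} The statement insists that ``quadratic'' means degree exactly two. Write $F_J=N/(dh)$ with $N=f(v)dh-f(v)h^2+m_eA^2+q_eAh$ and $A=x(v_y-q_e)-v_x(y-q_e)$. A priori you only know $\deg N\le2$. If $\deg N\le1$ and $h\nmid N$, then $F_J$ is a non-affine ratio of two affine functions, which is none of the three announced types. Your dichotomy (quadratic numerator, or $h$ divides it) passes over exactly this case. The paper closes it by computing the homogeneous quadratic part of $N$, namely $d(m_ev_yx^2-q_exy-v_xy^2)$. This vanishes only if $v_x=v_y=q_e=0$, which forces $d=0$. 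For two edges, your ``degree at most two, hence affine or quadratic'' already suffices.

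\textbf{A smaller correction.} The regions together with the vertices need not cover $\polygon$. The sets discarded in \Cref{prop:kkt_index_reductions} for being lower-dimensional can leave points on segments uncovered. What holds, and what the paper uses, is coverage of the interior up to a finite union of points and segments. Each open cell of the refinement lies entirely inside or outside every region and cannot sit inside that exceptional set, so it is covered. Continuity of the envelope on $\operatorname{conv}(\polygon)$ then handles cell boundaries. Existence of an optimal underestimator is most easily obtained, as in the paper, only at interior points via a subgradient of the envelope. Finally, \Cref{prop:unique_contact_triple} is not needed for the cellwise choice of formula.
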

\begin{proof}
Throughout the proof, we consider only generator sets that are
not ruled out by the preceding exclusion conditions.
We first establish that \Cref{alg:kkt_polygons} produces only finitely many regions.
There are finitely many generator sets $J$. Each three-generator case
produces finitely many candidates: 
Three noncollinear vertices uniquely determine the affine function
interpolating $f$ at these vertices. For two vertices and one edge, substituting $b=B(z)=\beta_2z^2+\beta_1z+\beta_0$ and
$a=z-m_eB(z)$ into $\eta_{v_1}(a, b)=\eta_{v_2}(a, b)$ gives
\[
\Delta_x z+(\Delta_y-m_e\Delta_x)B(z)=f(v_1)-f(v_2),
\]
where $\Delta_x=v_{1,x}-v_{2,x}$ and
$\Delta_y=v_{1,y}-v_{2,y}$.
Since $B$ has degree two, all polynomial coefficients could vanish
only if $\Delta_x=\Delta_y=0$, contradicting $v_1\neq v_2$. Thus, there are at most two candidates. For one vertex and two edges, substituting
$b=B(z)=\beta_2z^2+\beta_1z+\beta_0$ and
$a=z-m_{e_1}B(z)$ into
$\eta_{e_1}(a,b)=\eta_{e_2}(a,b)$ gives a polynomial equation
of degree at most four. Since $\beta_2\neq0$, its quartic coefficient
$\frac{(m_{e_2}-m_{e_1})^2\beta_2^2}{4m_{e_2}}$ is nonzero
if the slopes differ. If $m_{e_1}=m_{e_2}$, the distinct supporting
lines have $q_{e_1}\neq q_{e_2}$, and the equality reduces to
\[
z-2m_{e_1}B(z)=\frac{q_{e_1}+q_{e_2}}{2}.
\]
Its quadratic coefficient is $-2m_{e_1}\beta_2\neq0$.
Thus, there are at most four candidates. For three edges, relabel the edges so that, if two are parallel, they are $e_1,e_2$. Each linear relation $a=\beta_1b+\beta_0$
obtained from $\eta_{e_1}(a, b)=\eta_{e_2}(a, b)$ satisfies
$\beta_1^2=m_{e_1}m_{e_2}$.
Substituting it into $\eta_{e_2}(a,b)=\eta_{e_3}(a,b)$ gives
a polynomial equation of degree at most two, with quadratic coefficient
\[
\frac{(m_{e_3}-m_{e_1})(m_{e_3}-m_{e_2})}{4m_{e_3}}
\]
when written as $\eta_{e_2}(a, b)-\eta_{e_3}(a, b)=0$.
This coefficient is nonzero unless all three edges are parallel.
In that case, writing $m$ for their common slope, the two equalities
require both
\[
a=mb+\frac{q_{e_1}+q_{e_2}}{2},
\qquad
a=mb+\frac{q_{e_2}+q_{e_3}}{2},
\]
which is impossible since $q_{e_1}\neq q_{e_3}$.
Thus, there are at most four candidates. In the two-generator cases, each of finitely many domain combinations
yields a univariate polynomial inequality system whose solution set
is a finite union of intervals, including singletons.
These candidate intervals therefore generate finitely many regions.

We next examine the functional forms of the convex envelope on these regions.
In the three-generator cases, each candidate $(a^\ast,b^\ast,c^\ast)$ gives an affine function $a^\ast x+b^\ast y+c^\ast$ on its corresponding region.
For $J=\{e_1,e_2\}$, $a^\ast,b^\ast$
are affine in $(x,y)$ and $c^\ast$ is quadratic, so the resulting
function has degree at most two. Its homogeneous quadratic part is
\[
\frac{m_{e_1}}{(m_{e_1}+\beta_1)^2}(y+\beta_1x)^2.
\]
The coefficient of $y^2$ is strictly positive since $m_{e_1}>0$
and $m_{e_1}+\beta_1\neq0$. Hence, the function has degree exactly two.
For $J=\{v,e\}$, use $h,d$ from
\Cref{sec:kkt_one_vertex_one_edge} and set
$A(x,y)\coloneqq x(v_y-q_e)-v_x(y-q_e)$.
Since $\lambda=h/d$ and $x_e=A/h$, contact interpolation gives
\[
F_J=(1-\lambda)f(v)+\lambda f(\mathbf{x}_e)
=\frac{f(v)dh-f(v)h^2+m_eA^2+q_eAh}{dh}.
\]
The denominator $dh$ is a nonconstant affine function since
$d\neq0$ and $m_e>0$. The homogeneous quadratic part of the
numerator is $d(m_e v_y x^2-q_e xy-v_x y^2)$.
Since $m_e>0$ and $d\neq0$, its coefficients cannot all vanish:
this would force $v_x=v_y=q_e=0$ and hence $d=0$.
Thus, the numerator has degree exactly two.

Every region is described by finitely many linear inequalities
intersected with $\polygon$.
Refine a triangulation of $\polygon$ by all boundary lines of these
regions which results in a finite polyhedral subdivision of $\polygon$.
At every point in the interior of $\polygon$, the convex envelope
admits a supporting affine function, so an optimal affine
underestimator exists.
By the KKT representation and Carath\'eodory's theorem, the cases therefore
cover the interior of $\polygon$ except possibly a finite union of
points and segments associated with the discarded lower-dimensional
generator sets. Consequently, each cell interior is covered by a
KKT region and carries one of the stated functions. On overlaps,
these functions agree because each equals the convex envelope.

Finally, the convex envelope is closed as the pointwise supremum
of affine underestimators;
see \autocite[Theorem~9.4]{rockafellar1970convex}.
Since $\operatorname{conv}(\polygon)$ is a locally simplicial
subset of its effective domain, the convex envelope is continuous
there by \autocite[Theorem~10.2]{rockafellar1970convex}.
Thus, the cellwise formulas extend continuously to the cell
closures, covering all of $\polygon$.
\end{proof}

\subsection{Pair-based enumeration as an alternative}

An alternative method is the pair-based enumeration approach of
\autocite{locatelli2016_polyhedral}.
At an interior point $\mathbf{x}\in\polygon$, every optimal solution
of \eqref{eq:convex_program} has at least two active constraints
by the argument of \autocite[Lemma~3.1]{locatelli2016_polyhedral},
which also applies to nonconvex polygons.
One therefore enumerates pairs $h,w\in G(\polygon)$,
$h\neq w$, together with all combinations of the domains
$D_e^-,D_e^+,D_e$ for $e\in E'(\polygon)$.
Discarding pair--domain combinations on which $\eta_h=\eta_w$
holds identically, analogous variable substitutions and a decomposition
into feasible parameter intervals yield finitely many univariate
optimization problems.
The maximum of these feasible subproblem values gives the envelope. 
The subproblem value functions have the desired functional forms
on their respective polyhedral regions. However, a separate argument
is needed to show that their pointwise maximum admits a finite
polyhedral subdivision on which these forms are preserved.
Note that this construction explicitly enumerates up to
$3^{|E'(\polygon)|}$ domain combinations for each pair.

\section{Special Case: Orthogonal Polygons}
\label{sec:orthogonal_polygons}
In the following, we focus on the special case of orthogonal polygons:

\begin{definition}[Orthogonal polygon]
A polygon is called \textit{orthogonal} if all its edges are
horizontal or vertical.
\end{definition}

\noindent Vertices with interior angles of $90^\circ$ are called \textit{convex vertices} ($V^{\mathrm{conv}}(\polygon)$), while those
with interior angles of $270^\circ$ are called \textit{reflex vertices} ($V^{\mathrm{refl}}(\polygon)$). Vertices that are also vertices of $\operatorname{conv}(\polygon)$
are called \textit{outer vertices},
and the remaining vertices are \textit{inner vertices}. The advantages of this setting are twofold. First, the computation of such polygons as two-dimensional feasible regions arising from high-dimensional global optimization problems can be performed within a straightforward preprocessing step using optimization-based bound tightening techniques, which will be described later. Second, the computation of the convex envelope based on these precomputed polygons simplifies considerably, which will be shown in the following.

\subsection{Reduced collection of generators}
We first reduce $G(P)$ by identifying generators that can be discarded without changing the convex envelope. To this end, we introduce two relevant notions. Denote $h^{\boldsymbol{\alpha}}(x,y)\coloneqq x\cdot y-ax-by$ in the following.

\begin{definition}
    Let $\boldsymbol{\alpha} \coloneqq (a,b) \in \mathbb{R}^2$ be fixed. Furthermore, consider $\mathbf{x}_0, \mathbf{x}_1, \mathbf{x}_2 \in \polygon$. We then say that 
    \begin{itemize}
        \item $\mathbf{x}_0 \in \polygon$ is replaceable by $\mathbf{x}_1, \mathbf{x}_2 \in \polygon$ if the open line segment $(\mathbf{x}_1, \mathbf{x}_2)$ contains $\mathbf{x}_0$ and is either vertical or of nonpositive slope.
        \item $\mathbf{x}_0 \in \polygon$ is $\boldsymbol{\alpha}-$dominated by $\mathbf{x}_1\in \polygon$ if $h^{\boldsymbol{\alpha}}(\mathbf{x}_0) \ge h^{\boldsymbol{\alpha}}(\mathbf{x}_1)$.
    \end{itemize}
\end{definition}

We have the following relation between the latter properties: 
\begin{proposition}\label{prop:repl_dom}
    Let $\mathbf{x}_0, \mathbf{x}_1, \mathbf{x}_2 \in \polygon, \boldsymbol{\alpha} \in \mathbb{R}^2$. If $\mathbf{x}_0$ is replaceable by $\mathbf{x}_1, \mathbf{x}_2$ then $\mathbf{x}_0$ is $\boldsymbol{\alpha}-$dominated by $\mathbf{x}_1$ or $\mathbf{x}_2$ for all $\boldsymbol{\alpha}\in\R^2$. 
\end{proposition}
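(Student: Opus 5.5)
Our plan is to restrict $h^{\boldsymbol{\alpha}}$ to the line through $\mathbf{x}_1$ and $\mathbf{x}_2$ and use that this restriction is concave. This is the same computation as in the proof of \Cref{prop:positive_slope_contacts}, with the sign of the slope reversed.

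Since $\mathbf{x}_0$ lies in the open segment $(\mathbf{x}_1,\mathbf{x}_2)$, we write $\mathbf{x}_0=t\mathbf{x}_1+(1-t)\mathbf{x}_2$ for some $t\in(0,1)$. Set $\Delta_x\coloneqq x_{2}-x_{1}$ and $\Delta_y\coloneqq y_{2}-y_{1}$. The segment is vertical or has nonpositive slope, so $\Delta_x\Delta_y\le 0$ (for a vertical segment, $\Delta_x=0$). The terms $-ax-by$ are affine and therefore interpolate exactly. Expanding the product term as in the proof of \Cref{prop:positive_slope_contacts} gives
\[
h^{\boldsymbol{\alpha}}(\mathbf{x}_0)=t\,h^{\boldsymbol{\alpha}}(\mathbf{x}_1)+(1-t)\,h^{\boldsymbol{\alpha}}(\mathbf{x}_2)-t(1-t)\Delta_x\Delta_y .
\]
Because $t(1-t)>0$ and $\Delta_x\Delta_y\le0$, the correction term $-t(1-t)\Delta_x\Delta_y$ is nonnegative. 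Hence
\[
h^{\boldsymbol{\alpha}}(\mathbf{x}_0)\ge t\,h^{\boldsymbol{\alpha}}(\mathbf{x}_1)+(1-t)\,h^{\boldsymbol{\alpha}}(\mathbf{x}_2)\ge\min\{h^{\boldsymbol{\alpha}}(\mathbf{x}_1),h^{\boldsymbol{\alpha}}(\mathbf{x}_2)\}.
\]
This says $\mathbf{x}_0$ is $\boldsymbol{\alpha}$-dominated by whichever of $\mathbf{x}_1,\mathbf{x}_2$ attains the minimum. Since $\boldsymbol{\alpha}$ was arbitrary, the statement holds for all $\boldsymbol{\alpha}\in\R^2$.

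The dominating point may depend on $\boldsymbol{\alpha}$, which is consistent with the "or" in the statement. The proof uses neither the membership of the segment in $\polygon$ nor the orthogonality of $\polygon$. The only step needing care is the sign of the quadratic correction term, which the slope condition settles directly, so I do not expect a real obstacle.
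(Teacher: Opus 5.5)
Your proof is correct and takes the same approach as the paper. The paper invokes concavity of $x\cdot y$ along the segment to get $h^{\boldsymbol{\alpha}}(\mathbf{x}_0)\ge\lambda h^{\boldsymbol{\alpha}}(\mathbf{x}_1)+(1-\lambda)h^{\boldsymbol{\alpha}}(\mathbf{x}_2)\ge\min\{h^{\boldsymbol{\alpha}}(\mathbf{x}_1),h^{\boldsymbol{\alpha}}(\mathbf{x}_2)\}$, and your identity with the nonnegative correction term $-t(1-t)\Delta_x\Delta_y$ simply makes that concavity explicit.
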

\begin{proof}
Due to the concavity of $x \cdot y$ along $(\mathbf{x_1}, \mathbf{x_2})$ we have for some $\lambda \in (0, 1)$ that \\
$h^{\boldsymbol{\alpha}}(\mathbf{x}_0) \ge \lambda h^{\boldsymbol{\alpha}}(\mathbf{x}_1) + (1 - \lambda) h^{\boldsymbol{\alpha}}(\mathbf{x}_2) \ge \min(h^{\boldsymbol{\alpha}}(\mathbf{x}_1), h^{\boldsymbol{\alpha}}(\mathbf{x}_2))$.
\end{proof}

We first use this domination property to show that only convex vertices are needed as generators.

\begin{lemma}\label{lemma:reduction}
    Let $\orth \subseteq \mathbb{R}^2$ be an orthogonal polygon. Then we have for all $\boldsymbol{\alpha} \in \mathbb{R}^2$ that 
    \begin{equation*}
        \min_{\mathbf{y} \in \orth} h^{\boldsymbol{\alpha}}(\mathbf{y}) = \min_{\mathbf{y} \in V^{\mathrm{conv}}(\orth)} h^{\boldsymbol{\alpha}}(\mathbf{y}).
    \end{equation*}
\end{lemma}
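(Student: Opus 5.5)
The inequality ``$\le$'' is trivial because $V^{\mathrm{conv}}(\orth)\subseteq\orth$, so the work is in ``$\ge$''. The plan is to show that every point of $\orth$ is $\boldsymbol{\alpha}$-dominated by some convex vertex. Two facts drive this. The function $h^{\boldsymbol{\alpha}}(x,y)=xy-ax-by$ is affine along every horizontal and every vertical segment. Moreover, every edge of an orthogonal polygon is horizontal or vertical, so $E'(\orth)=\varnothing$. We use \Cref{prop:repl_dom} repeatedly, but we allow the replacing points to be chosen freely along axis-parallel segments inside $\orth$.

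\textbf{Step 1: reduction to vertices.} By compactness, $h^{\boldsymbol{\alpha}}$ attains its minimum on $\orth$. Take any $\mathbf{y}\in\orth$ and shoot a horizontal ray in each direction until it hits the boundary. The maximal horizontal segment of $\orth$ through $\mathbf{y}$ lies in $\orth$, and $h^{\boldsymbol{\alpha}}$ is affine on it. Therefore $\mathbf{y}$ is dominated by one endpoint of that segment, which is a boundary point; this is \Cref{prop:repl_dom} with a zero-slope segment. A boundary point on a vertical edge is dominated by one of that edge's endpoints by the same argument. A boundary point on a horizontal edge lies in the relative interior of an affine segment (the edge), so it is again dominated by an endpoint. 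Hence the minimum over $\orth$ equals the minimum over $V(\orth)$, as in the general discussion preceding \eqref{eq:first_reformulation}.

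\textbf{Step 2: eliminate reflex vertices.} This is the crux. Let $r$ be a reflex vertex. Its two incident edges, one horizontal and one vertical, leave $r$ in two directions; call them $-u$ and $-w$. Since the interior angle is $270^\circ$, a small neighbourhood of $r$ contains the full horizontal line segment through $r$ and the full vertical line segment through $r$. So $r$ lies in the relative interior of a horizontal segment inside $\orth$, one of whose ends is the incident horizontal edge. Follow this horizontal line in the direction opposite to that edge until it exits $\orth$ at a boundary point $\mathbf{p}$. The maximal segment from the far vertex of the incident edge to $\mathbf{p}$ contains $r$ in its interior and lies in $\orth$, so $r$ is dominated by one of its endpoints. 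That endpoint is either the other vertex of the incident horizontal edge or the point $\mathbf{p}$. If $\mathbf{p}$ is not a vertex, Step 1 pushes it further to a vertex of the edge containing it.

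\textbf{Step 3: termination.} The only remaining concern is that this process could cycle among reflex vertices. To rule this out, use a strict potential. Among all minimizers of $h^{\boldsymbol{\alpha}}$ over $V(\orth)$, pick one that is extremal, for instance lexicographically maximal in $(x,y)$ after a suitable orientation. A cleaner choice is to argue directly: let $r$ be a minimizing vertex, and suppose every minimizer is reflex. Consider both the horizontal segment and the vertical segment through $r$ with $r$ in their relative interiors. Because $h^{\boldsymbol{\alpha}}$ is affine on each segment and minimal at the interior point $r$, it is constant on both segments. So the whole cross of maximal segments through $r$ consists of minimizers, and in particular their endpoints are minimizers. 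Now take the minimizer in the finite vertex set that is extreme, say with maximal $x$ and then maximal $y$. If it were reflex, the horizontal segment through it would extend strictly to the right, giving an endpoint with larger $x$. Pushing that endpoint along its edge via Step 1, again at constant value, reaches a vertex with $x$ at least as large, and with strictly larger $y$ in the tie case. This contradicts extremality. Hence some minimizer is a convex vertex.

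The main obstacle is exactly this extremality bookkeeping in Step 3. We must ensure that the constant-value propagation strictly improves the chosen order and does not return to the same vertex. We handle it by choosing the direction, right or up, in which the maximal segment through a reflex vertex genuinely extends beyond it; the $270^\circ$ angle guarantees that such a direction exists.
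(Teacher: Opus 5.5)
Your proof is correct, but it takes a different route from the paper's.

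\textbf{The paper's route.} It runs a constructive walk. From an arbitrary point, it alternately jumps to the lower-$h^{\boldsymbol{\alpha}}$ endpoint of the maximal horizontal and then vertical chord, each step justified by \Cref{prop:repl_dom}, until it stops at a convex vertex. It must then prove termination: only finitely many points can occur, and any cycle would consist of value-preserving steps. Such steps force $y=a$ on horizontal steps and $x=b$ on vertical steps, which is incompatible with alternation.

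\textbf{Your route.} You first reduce to $V(\orth)$ by the standard edge-endpoint argument, then use an extremal principle. A minimizing reflex vertex lies in the relative interior of a horizontal chord, so $h^{\boldsymbol{\alpha}}$ is constant on that chord. The chord's right endpoint is then a minimizer with strictly larger $x$: either a vertex, or a point inside a vertical edge whose endpoints are then minimizers too. Hence the $x$-maximal vertex minimizer is convex.

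\textbf{What each buys.} Your argument avoids the cycling analysis and is shorter. The paper's walk gives an explicit domination chain from every point, and that chain is what gets modified, with strict descents along negative-slope segments, in the proof of \Cref{lemma:reduction-enh}.

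\textbf{Minor remarks.} Step~2 is never used, since Step~3 handles reflex vertices directly. The $y$ tie-break is vacuous: a maximal horizontal chord cannot end in the relative interior of a horizontal edge, so its endpoint has strictly larger $x$. You could even drop the ordering entirely. Constancy on both the horizontal and the vertical chord through a minimizing reflex vertex $r$ forces $r=(b,a)$ and minimum value $-ab$, hence $\orth\subseteq\{(x-b)(y-a)\geq 0\}$. This contradicts the fact that $\orth$ locally occupies three quadrants around $r$, one of which lies where $(x-b)(y-a)<0$.
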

\begin{proof}
    We need to show that each point of the orthogonal polygon that is not a convex vertex is dominated by a convex vertex.
    To this end, let $p_0 \coloneqq (x_0, y_0)$ be, without loss of generality, either an interior point, a reflex vertex or a point in the relative interior of a horizontal edge of $\orth$. (The case for points in the relative interiors of vertical edges follows analogously.) Consider the following procedure: 

    \begin{algorithmic}[1]
        \State $(x, y) \gets (x_0, y_0)$
        \State direction $\gets$ horizontal
        \Repeat 
            \If{direction $=$ horizontal}:
                \State $p' \gets (\min_{(x', y) \in \orth}x', y)$, $p'' \gets (\max_{(x', y) \in \orth}x', y)$
                \State direction $\gets$ vertical
            \Else
                \State $p' \gets (x, \min_{(x, y') \in \orth}y')$, $p'' \gets (x, \max_{(x, y') \in \orth}y')$ 
                \State direction $\gets$ horizontal
            \EndIf
            \State $(x, y) \gets \argmin_{p \in \{p', p''\}}h^{\boldsymbol{\alpha}}(p)$
        \Until{$(x, y)$ is a convex vertex}
    \end{algorithmic}

    We claim that this procedure terminates after a finite number of iterations $n \in \mathbb{N}$ and produces a sequence of points $p_1, \dots, p_n \in \orth$ such that 
    \begin{equation*}
        h^{\boldsymbol{\alpha}}(p_0) \ge h^{\boldsymbol{\alpha}}(p_1) \ge \cdots \ge h^{\boldsymbol{\alpha}}(p_n)
    \end{equation*}
    and $p_n$ is a convex vertex. To prove the inequalities let $p$ denote a point produced in an arbitrary iteration in line $10$, and let $\tilde{p}$ denote the point in the subsequent iteration which is either $p'$ or $p''$. 
    One can easily see that, in the horizontal case, each of $p'$ and $p''$ either lies in the relative interior of a vertical edge or coincides with a convex vertex of $\orth$. Analogously, in the vertical case, each of them either lies in the relative interior of a horizontal edge or coincides with a convex vertex. Hence, unless $\tilde{p}$ is a convex vertex which triggers a termination, the procedure considers always distinct points $p', p''$ that are also distinct from $p$ and $p$ is replaceable by $p', p''$. With \Cref{prop:repl_dom} we then have that $h^{\boldsymbol{\alpha}}(p) \ge h^{\boldsymbol{\alpha}}(\tilde{p})$. 
    It remains to show that the procedure terminates after finitely many iterations. By construction, the final point $p_n$ must then be a convex vertex, since this is the only termination criterion. We prove termination by contradiction. Assume that the procedure generates an infinite sequence $(p_i)_{i \in \mathbb{N}_0}$ satisfying $h^{\boldsymbol{\alpha}}(p_i) \ge h^{\boldsymbol{\alpha}}(p_{i+1})$ for all $i \in \mathbb{N}_0$. Since the coordinates of the generated points are drawn from the start
    coordinates and the finitely many edge coordinates of $\orth$, only
    finitely many points can occur. Thus, an infinite sequence must revisit
    a point. Such a return requires at least four nontrivial steps, since
    the directions alternate between horizontal and vertical. By
    monotonicity, all steps between the two visits preserve the value of
    $h^{\boldsymbol{\alpha}}$. For a nontrivial horizontal step from $(x,y)$ to $(x',y)$, equality gives
    \[
    0=h^{\boldsymbol{\alpha}}(x',y)-h^{\boldsymbol{\alpha}}(x,y)
    =(x'-x)(y-a),
    \]
    hence $y=a$. Analogously, a nontrivial vertical step preserving
    $h^{\boldsymbol{\alpha}}$ requires $x=b$.
    The return path contains two horizontal steps separated by a single
    vertical step, or two vertical steps separated by a single horizontal
    step. In the first case, both horizontal steps must lie at height $a$,
    contradicting the nontriviality of the intervening vertical step.
    The second case yields an analogous contradiction. Therefore, no point can be revisited, and the procedure
    must terminate. By constructing an
    analogous procedure starting with vertical instead of horizontal direction one can see that also
    points from the relative interiors of vertical edges are dominated by convex vertices.
\end{proof}

The reduction can be strengthened by also discarding replaceable convex vertices.

\begin{lemma}\label{lemma:reduction-enh}
    Let $\orth\subseteq\mathbb{R}^2$ be an orthogonal polygon and let
    $R(\orth)\coloneqq\{\mathbf{x}_0\in\orth:\exists\,\mathbf{x}_1,\mathbf{x}_2\in\orth \text{ such that }
    \mathbf{x}_0 \text{ is replaceable by } \mathbf{x}_1,\mathbf{x}_2\}$. Then we have for all
    $\boldsymbol{\alpha}\in\mathbb{R}^2$ that
    \begin{equation*}
        \min_{\mathbf{y}\in\orth}h^{\boldsymbol{\alpha}}(\mathbf{y})
        =\min_{\mathbf{y}\in V^{\mathrm{conv}}(\orth)\setminus R(\orth)}h^{\boldsymbol{\alpha}}(\mathbf{y}),
    \end{equation*}
    i.e., replaceable (in particular replaceable inner) convex vertices can be discarded in
    \Cref{lemma:reduction}. Moreover, the right-hand set is nonempty, since every outer convex vertex,
    being an extreme point of $\operatorname{conv}(\orth)$, is not replaceable.
\end{lemma}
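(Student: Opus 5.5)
By \Cref{lemma:reduction}, the minimum of $h^{\boldsymbol{\alpha}}$ over $\orth$ is attained at some convex vertex, and it equals the minimum over all of $\orth$. Removing vertices from the candidate set can only increase the right-hand side. It therefore suffices to show that the minimum over $V^{\mathrm{conv}}(\orth)$ is also attained at a convex vertex outside $R(\orth)$.

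Fix $\boldsymbol{\alpha}$. Among all minimizers of $h^{\boldsymbol{\alpha}}$ on $\orth$, I would pick one, $\mathbf{x}^\ast$, that is extreme in a suitable sense. The natural choice is to take $\mathbf{x}^\ast$ to be an extreme point of the compact set $M\coloneqq\operatorname{argmin}_{\orth}h^{\boldsymbol{\alpha}}$. Concretely, I would choose $\mathbf{x}^\ast\in M$ maximizing a strictly convex function such as $\|\mathbf{x}\|^2$, or lexicographically extreme.

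Suppose $\mathbf{x}^\ast$ is replaceable by $\mathbf{x}_1,\mathbf{x}_2\in\orth$. Write $\mathbf{x}^\ast=\lambda\mathbf{x}_1+(1-\lambda)\mathbf{x}_2$ with $\lambda\in(0,1)$. Along a segment of nonpositive slope or a vertical segment, $xy$ is concave, so the inequality in \Cref{prop:repl_dom} reads
\[
h^{\boldsymbol{\alpha}}(\mathbf{x}^\ast)\ge\lambda h^{\boldsymbol{\alpha}}(\mathbf{x}_1)+(1-\lambda)h^{\boldsymbol{\alpha}}(\mathbf{x}_2)\ge\min_{\orth}h^{\boldsymbol{\alpha}}.
\]
Since $\mathbf{x}^\ast$ is a minimizer, equality holds throughout. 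This forces $\mathbf{x}_1,\mathbf{x}_2\in M$, so $\mathbf{x}^\ast$ is not extreme in $M$ (or not maximal for the strictly convex tiebreaker), a contradiction. Hence $\mathbf{x}^\ast\notin R(\orth)$.

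It remains to show that $\mathbf{x}^\ast$ is a convex vertex. Here I would rerun the argument of \Cref{lemma:reduction}: if $\mathbf{x}^\ast$ were not a convex vertex, it would be interior, on an edge's relative interior, or reflex. In each case it is replaceable by the horizontal or vertical chord endpoints $p',p''$ constructed there, which are distinct from it. That contradicts $\mathbf{x}^\ast\notin R(\orth)$. Thus $\mathbf{x}^\ast\in V^{\mathrm{conv}}(\orth)\setminus R(\orth)$, proving the equality.

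For nonemptiness, I would argue as follows. An outer convex vertex $v$ is an extreme point of $\operatorname{conv}(\orth)$. If $v$ lay in an open segment $(\mathbf{x}_1,\mathbf{x}_2)$ with $\mathbf{x}_i\in\orth\subseteq\operatorname{conv}(\orth)$, it would not be extreme. Hence $v\notin R(\orth)$. Such vertices exist, since $\operatorname{conv}(\orth)$ is a polytope whose vertices are vertices of $\orth$ with interior angle $90^\circ$.

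The main obstacle is the choice of the tiebreaking minimizer. One must ensure that the equality case in the concavity inequality genuinely forces both endpoints into $M$, which it does since both weights are positive. One must also ensure that the selected point is not a midpoint of a segment in $M$. Taking a maximizer of $\|\mathbf{x}\|^2$ over the compact set $M$ handles this cleanly, because a strictly convex function cannot be maximized at an interior point of a segment.
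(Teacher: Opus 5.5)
Your proof is correct, but it takes a genuinely different route from the paper.

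\textbf{The paper's route.} The paper proves the lemma constructively by modifying the descent walk from the proof of \Cref{lemma:reduction}. At an inner convex vertex it checks replaceability. It continues with horizontal or vertical steps when such a replacing segment exists. Otherwise it steps along a negative-slope segment, where $xy$ is strictly concave, so the descent is strict. It must then argue termination, by excluding cycles and bounding the number of strict descents via finiteness of $V^{\mathrm{conv}}(\orth)$.

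\textbf{Your route.} You replace this walk by an extremal choice: a minimizer $\mathbf{x}^\ast$ of $h^{\boldsymbol{\alpha}}$ that maximizes $\|\mathbf{x}\|^2$ over the compact argmin set $M$. The equality case of the concavity inequality forces both replacing points into $M$. This contradicts the strict convexity of the tiebreaker, since the replacing points are distinct and $\mathbf{x}^\ast$ lies strictly between them. Hence $\mathbf{x}^\ast\notin R(\orth)$. Interior points, relative-interior edge points and reflex vertices all admit horizontal or vertical chords through them, so they lie in $R(\orth)$. This forces $\mathbf{x}^\ast$ to be a convex vertex.

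\textbf{Comparison.} Your argument needs no cycle or termination analysis. It treats horizontal, vertical and negative-slope replacements uniformly and never needs strictness. It also reproves \Cref{lemma:reduction} along the way, so your opening appeal to that lemma is unnecessary. Likewise, the existence of $\mathbf{x}^\ast$ already shows $V^{\mathrm{conv}}(\orth)\setminus R(\orth)\neq\varnothing$, so the separate outer-vertex argument is redundant (though it is also correct). The paper's procedure gives an explicit chain from any starting point to a dominating non-replaceable convex vertex, which is algorithmically suggestive. For the stated equality of minima, however, pointwise domination is equivalent to what you prove, so it buys nothing extra here.
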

\begin{proof}
We can adapt the procedure from the proof of \Cref{lemma:reduction} as follows. Whenever we consider an
inner convex vertex $p$ of $\orth$, we first check whether it is replaceable. If this is not the case, we
terminate the algorithm.
Otherwise, we distinguish two subcases: either there exist $p',p'' \in \orth$ such that
$p \in (p',p'')$ (the open line segment between $p'$ and $p''$)
and $p',p''$ have the same $y$- or the same $x$-coordinate, or no such pair exists.
In the first case, we proceed exactly as in the proof of \Cref{lemma:reduction}, initializing the
direction as horizontal or vertical, respectively. If no such horizontal or vertical line segment
exists, then there must exist $p',p'' \in \orth$ such that $p \in (p',p'')$ and
$(p',p'')$ lies on a line of negative slope.
In this case, we even obtain strict domination, that is,
$h^{\boldsymbol{\alpha}}(p) > \min(h^{\boldsymbol{\alpha}}(p'), h^{\boldsymbol{\alpha}}(p''))$,
since $x\cdot y$ is strictly concave along a segment of negative slope, and, as
before, we choose the new point as $\argmin_{q \in \{p',p''\}} h^{\boldsymbol{\alpha}}(q)$. Due to the
strict domination, no cycle containing such a strict descent can occur.
Moreover, only finitely many strict descents can occur: such a step is performed only
at convex vertices, i.e., at the points at which the walk of \Cref{lemma:reduction} stops, and since
$h^{\boldsymbol{\alpha}}$ never increases, a convex vertex from which a strict descent was made is
never visited again; as $V^{\mathrm{conv}}(\orth)$ is finite, after finitely many such steps only
horizontal and vertical steps remain, for which termination follows as in
\Cref{lemma:reduction}.
Apart from this, the argument proceeds analogously to the proof of \Cref{lemma:reduction}. Using this
modified procedure, we finally obtain that every point in $\polygon$ is dominated either by an outer
convex vertex or by an inner convex vertex that is not replaceable.
\end{proof}

%\noindent \Cref{fig:domination-procedure} illustrates the procedure from
%\Cref{lemma:reduction} and its extension in \Cref{lemma:reduction-enh}.

%\input{figures/replacement_algo}

\noindent These reductions yield a characterization of the convex envelope using only the remaining vertex generators.

\begin{theorem}\label{thm:reduction}
    Let $\orth \subseteq \mathbb{R}^2$ be an orthogonal polygon. Then the convex envelope of the bilinear function $f(x, y) = x \cdot y$ over $\orth$ at some point $(x^*, y^*) \in \orth$ can be computed by solving the following problem
    \begin{equation}\label{eq:convex_program_ortho}
    	\begin{aligned}
    			\conv_{f,\orth}(x^*, y^*) = \max_{a, b, c} \quad & ax^* + by^* + c  \\
    			\text{s.t.} \quad &  x\cdot y - ax - by \ge c, \quad \forall (x, y) \in G_{\perp}(\orth),
    	\end{aligned}
    \end{equation}
    with the collection of orthogonal generators $G_{\perp}(\orth) \coloneqq V^{\mathrm{conv}}(\orth)\setminus R(\orth)$
    that are all outer convex vertices and inner convex vertices that are not replaceable. 
\end{theorem}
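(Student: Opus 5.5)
Theorem~\ref{thm:reduction} follows from the representation \eqref{eq:def_conv} of the convex envelope combined with Lemma~\ref{lemma:reduction-enh}. By Remark~\ref{def:convex_envelope}, for $(x^*,y^*)\in\orth$ we have
\[
\conv_{f,\orth}(x^*,y^*)=\sup_{a,b,c}\bigl\{ax^*+by^*+c : xy-ax-by\ge c \text{ for all }(x,y)\in\orth\bigr\}.
\]
For fixed $\boldsymbol{\alpha}=(a,b)$, the constraint family is equivalent to $\min_{\mathbf{y}\in\orth}h^{\boldsymbol{\alpha}}(\mathbf{y})\ge c$, exactly as in \eqref{eq:first_reformulation}. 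Lemma~\ref{lemma:reduction-enh} replaces this minimum by $\min_{\mathbf{y}\in G_\perp(\orth)}h^{\boldsymbol{\alpha}}(\mathbf{y})$ for every $\boldsymbol{\alpha}\in\R^2$. Hence the feasible sets of $(a,b,c)$ in \eqref{eq:def_conv} and in \eqref{eq:convex_program_ortho} coincide, and so do their suprema. Note that the lemma requires no assumption on $\boldsymbol{\alpha}$, which is precisely what makes this substitution legitimate uniformly over the feasible set.

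It remains to justify writing $\max$ rather than $\sup$, i.e., that the supremum is finite and attained. I would argue as follows.

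\emph{Finiteness.} By Lemma~\ref{lemma:reduction-enh}, $G_\perp(\orth)$ is nonempty, finite, and contains every outer convex vertex of $\orth$, i.e., all extreme points of $\operatorname{conv}(\orth)$. Since $(x^*,y^*)\in\operatorname{conv}(\orth)=\operatorname{conv}(G_\perp(\orth))$, write $(x^*,y^*)=\sum_i\mu_i\mathbf{g}_i$ as a convex combination of generators. Every feasible $(a,b,c)$ then satisfies $ax^*+by^*+c\le\sum_i\mu_i f(\mathbf{g}_i)$, so the objective is bounded above.

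\emph{Attainment.} The problem \eqref{eq:convex_program_ortho} is a linear program in $(a,b,c)$ with finitely many constraints. It is feasible: take $a=b=0$ and $c=\min_{\mathbf g\in G_\perp(\orth)}f(\mathbf g)$. A feasible linear program with bounded objective attains its optimum.

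No real obstacle is expected, since all the work is contained in Lemma~\ref{lemma:reduction-enh}. The only point needing care is the attainment step, which is handled by the LP argument together with the fact that the outer convex vertices are retained in $G_\perp(\orth)$.
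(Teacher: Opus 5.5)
Your proof is correct and follows the paper's route. The paper states the theorem as a direct consequence of Lemma~\ref{lemma:reduction-enh} substituted into the affine-minorant representation~\eqref{eq:def_conv}, exactly as you do; your extra step showing that the supremum is finite and attained (via $\operatorname{conv}(\orth)=\operatorname{conv}(G_\perp(\orth))$ and the fact that a feasible, bounded LP with finitely many constraints attains its optimum) is sound and only makes explicit what the paper leaves implicit in writing $\max$.
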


\subsection{Exclusion of generator sets}

Having reduced the collection of generators, we now turn to excluding generator sets $J$ of the remaining generators. For this purpose, we classify convex vertices by their orientation.

\begin{definition}
\label{def:north-vertices}
    Let $\epsilon, \delta > 0$. We call a convex vertex $(\bar{x},\bar{y})\in \polygon$ a vertex of type
    \begin{itemize}
        \item north-west (NW), if
  $\exists\, \epsilon > 0 \;\colon\;
   \forall\, \delta,\ 0 < \delta \le \epsilon \;\colon\;
   (\bar{x}+\delta,\, \bar{y}-\delta) \in \polygon$,
        \item north-east (NE), if
  $\exists\, \epsilon > 0 \;\colon\;
   \forall\, \delta,\ 0 < \delta \le \epsilon \;\colon\;
   (\bar{x}-\delta,\, \bar{y}-\delta) \in \polygon$,
        \item south-west (SW), if
  $\exists\, \epsilon > 0 \;\colon\;
   \forall\, \delta,\ 0 < \delta \le \epsilon \;\colon\;
   (\bar{x}+\delta,\, \bar{y}+\delta) \in \polygon$,
        \item south-east (SE), if
  $\exists\, \epsilon > 0 \;\colon\;
   \forall\, \delta,\ 0 < \delta \le \epsilon \;\colon\;
   (\bar{x}-\delta,\, \bar{y}+\delta) \in \polygon$.
    \end{itemize}
\end{definition}

\begin{figure}[htbp]
  \centering
\begin{tikzpicture}[scale=1.35,
    interior/.style = {fill=color1!15, draw=color1!80, line width=1.1pt,
                       line join=round},
    vtx/.style      = {circle, fill=color1!80, draw=white, line width=.5pt,
                       inner sep=1.7pt},
    reflex/.style   = {circle, draw=color1!80, fill=white, line width=.9pt,
                       inner sep=1.7pt},
    pt/.style       = {cross out, draw=color1!80, line width=.9pt,
                       inner sep=1.6pt},
    guide/.style    = {color1!80, dashed, dash pattern=on 2pt off 1.4pt,
                       -{Stealth[length=3.5pt]}, line width=.7pt},
    leader/.style   = {color1!50, line width=.4pt},
    lbl/.style      = {color1!80, font=\bfseries\footnotesize},
    coord/.style    = {color1!80, font=\scriptsize, inner sep=1pt}
  ]

  % ---- the orthogonal polygon (L-shape) ----
  \fill[interior]
    (0,0) -- (4,0) -- (4,1.5) -- (1.5,1.5) -- (1.5,4) -- (0,4) -- cycle;
  \node[color1!60!black, font=\large\itshape] at (0.75,2.7) {$\orth$};

  % ---- reflex vertex (interior angle 270 deg), not classified ----
  \node[reflex] (R) at (1.5,1.5) {};
  \node[coord, anchor=west] at (1.72,1.80) {reflex};

  % ================= NW vertex : interior to SE =================
  \node[vtx] at (0,4) {};
  \node[lbl, anchor=south] at (0,4.08) {\nwvertex};
  \draw[guide] (0,4) -- (0.24,3.76);
  \node[pt] (pNW) at (0.28,3.72) {};
  \node[coord, anchor=north] at (0.65,3.62) {$(\bar{x}+\delta,\;\bar{y}-\delta)$};

  % ================= NE vertex : interior to SW =================
  \node[vtx] at (4,1.5) {};
  \node[lbl, anchor=south west] at (4.06,1.54) {\nevertex};
  \draw[guide] (4,1.5) -- (3.76,1.26);
  \node[pt] (pNE) at (3.72,1.22) {};
  \node[coord, anchor=east] at (3.62,1.22) {$(\bar{x}-\delta,\;\bar{y}-\delta)$};

  % ================= SE vertex : interior to NW =================
  \node[vtx] at (4,0) {};
  \node[lbl, anchor=north] at (4,-0.10) {\severtex};
  \draw[guide] (4,0) -- (3.76,0.24);
  \node[pt] (pSW) at (3.72,0.28) {};
  \node[coord, anchor=east] at (3.62,0.28) {$(\bar{x}-\delta,\;\bar{y}+\delta)$};

  % ================= SW vertex : interior to NE =================
  \node[vtx] at (0,0) {};
  \node[lbl, anchor=east] at (-0.10,0) {\swvertex};
  \draw[guide] (0,0) -- (0.24,0.24);
  \node[pt] (pSE) at (0.28,0.28) {};
  \node[coord, anchor=west] at (0.38,0.28) {$(\bar{x}+\delta,\;\bar{y}+\delta)$};

\end{tikzpicture}
  \caption{Orthogonal polygon $\orth$ with vertices classified according to
    \Cref{def:north-vertices}}
  \label{fig:direction_vertices}
\end{figure}

\noindent This classification, depicted in \Cref{fig:direction_vertices}, allows us to rule out certain vertex triples as candidate sets.

\begin{proposition}\label{prop:triple-exclusion}
    Let $\orth \subseteq \mathbb{R}^2$ be an orthogonal polygon and let
    $J \subseteq G_{\perp}(\orth)$ with $|J| = 3$ be a candidate generator set
    consisting of three vertices. Using the hyperbola geometry illustrated in
    \Cref{fig:hyperbola-cases}, together with the classification in
    \Cref{def:north-vertices}, we can exclude certain triples a priori.
    Indeed, the following triples need not be considered when constructing two-dimensional KKT regions:
    \begin{enumerate}[label=(\roman*)]
        \item triples with one of the following type combinations, up to permutation:
        \[
        \begin{array}{ll}
            (\mathrm{NW},\mathrm{NW},\mathrm{NE}), & (\mathrm{NW},\mathrm{NW},\mathrm{SW}), \\
            (\mathrm{NE},\mathrm{SE},\mathrm{SE}), & (\mathrm{SW},\mathrm{SE},\mathrm{SE}), \\
            (\mathrm{NW},\mathrm{NE},\mathrm{SW}), & (\mathrm{NE},\mathrm{NE},\mathrm{SW}), \\
            (\mathrm{NE},\mathrm{SW},\mathrm{SW}), & (\mathrm{NE},\mathrm{SW},\mathrm{SE}), \\
            (\mathrm{NW},\mathrm{NW},\mathrm{NW}), & (\mathrm{SE},\mathrm{SE},\mathrm{SE});
        \end{array}
        \]
        \item triples containing two \nwvertices{} or two \severtices{} whose connecting line
              has negative slope or is horizontal or vertical;
        \item triples containing two \nevertices{} or two \swvertices{} whose connecting line
              has positive slope or is horizontal or vertical.
    \end{enumerate}
\end{proposition}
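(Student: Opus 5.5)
The plan is to translate each vertex type into a sign condition on the position of the contact point relative to the centre $(b,a)$ of the hyperbola from \Cref{fig:hyperbola-cases}. Combinations of these sign conditions that are incompatible, or that force the triangle spanned by the three contacts to avoid $\polygon$, are then excluded.

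\textbf{Step 1 (local condition at a vertex).} Suppose $\ell_{a,b,c}$ is an affine underestimator and a convex vertex $(\bar x,\bar y)$ is a contact point, so $\delta_{a,b,c}(\bar x,\bar y)=0$ and $\delta_{a,b,c}\ge0$ on $\orth$. Near a convex vertex the orthogonal polygon coincides with a closed quadrant spanned by the two incident edges. For a NW vertex this quadrant is spanned by the directions $(1,0)$ and $(0,-1)$. Since $\nabla\delta_{a,b,c}=(y-a,\,x-b)$, the one-sided derivatives along both edge directions must be nonnegative. This gives:
\begin{itemize}
\item NW: $\bar y\ge a$ and $\bar x\le b$, so the contact lies in the closed NW quadrant around $(b,a)$;
\item NE: $\bar x\le b$ and $\bar y\le a$ (SW quadrant);
\item SW: $\bar x\ge b$ and $\bar y\ge a$ (NE quadrant);
\item SE: $\bar x\ge b$ and $\bar y\le a$ (SE quadrant).
\end{itemize}
Because $\kappa=(\bar x-b)(\bar y-a)$ at a contact, NW and SE contacts force $\kappa\le0$, while NE and SW contacts force $\kappa\ge0$. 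For a two-dimensional KKT region, \Cref{prop:unique_contact_triple} lets me assume that the three vertices of $J$ are exactly the noncollinear contacts of a single underestimator, and that $\polygon\cap\operatorname{int}\conv(J)\neq\varnothing$.

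\textbf{Step 2 (sign and connectivity arguments).} Every triple in list (i) except $(\mathrm{NW},\mathrm{NW},\mathrm{NW})$ and $(\mathrm{SE},\mathrm{SE},\mathrm{SE})$ either mixes a type from $\{\mathrm{NW},\mathrm{SE}\}$ with one from $\{\mathrm{NE},\mathrm{SW}\}$, which forces $\kappa=0$, or contains both an NE and an SW vertex.

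In the latter case with $\kappa>0$, the NE contact lies on the SW branch and the SW contact on the NE branch. These branches bound the two disconnected components of the superlevel set, which contradicts the connectedness of $\polygon$.

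The case $\kappa=0$ I treat separately. As argued after \Cref{prop:positive_slope_contacts}, a simple polygon lies in a single closed quadrant with apex $(b,a)$, and every contact lies on one of its two bounding rays. I then check, for each listed combination, that the quadrant conditions of Step 1 are compatible with the contacts lying on these rays only if all three contacts lie on a single ray (collinear, excluded by \Cref{prop:kkt_index_reductions}), or only if some contact equals the apex and the triangle degenerates.

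\textbf{Step 3 (same-branch arguments for (ii), (iii) and the pure triples).} Two NW contacts with $\kappa<0$ lie on the branch in $x<b$, $y>a$. There $y=a+\kappa/(x-b)$ is strictly increasing and convex, so any two contacts on it span a line of positive slope. This excludes the negative, horizontal and vertical cases in (ii). The boundary case $\kappa=0$ is handled as in Step 2, where the contacts lie on the two rays and hence are horizontal or vertical to each other. SE vertices behave symmetrically, and NE and SW with $\kappa>0$ lie on decreasing branches, which yields (iii).

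For $(\mathrm{NW},\mathrm{NW},\mathrm{NW})$, the feasible set in $x<b$ is the region below the convex arc. The triangle on three points of a convex arc lies above the arc, so its interior misses $\polygon$, and no two-dimensional region arises. The triple $(\mathrm{SE},\mathrm{SE},\mathrm{SE})$ is analogous.

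\textbf{Main obstacle.} I expect the main difficulty to be the degenerate and boundary cases: equality in the Step 1 inequalities, $\kappa=0$, and a contact at the apex. In particular I must verify entry by entry that each combination in the table really becomes infeasible or collinear. I would first do a careful case check at $\kappa=0$, using simplicity of $\polygon$ to confine it to one quadrant.
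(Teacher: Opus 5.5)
Your handling of $\kappa>0$ (connectedness keeps NE and SW contacts apart) is sound and matches the paper. So is your handling of $\kappa<0$ (monotone branches give positive slope for two NW or two SE contacts) and of the pure triples $(\mathrm{NW},\mathrm{NW},\mathrm{NW})$, $(\mathrm{SE},\mathrm{SE},\mathrm{SE})$.

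The gap is the case $\kappa=0$. You never use the fact that $J\subseteq G_{\perp}(P_{\perp})$ excludes replaceable vertices. Without it, your Step 2 dichotomy is false: you claim that either all three contacts lie on one ray, or one contact is at the apex and the triangle degenerates. Here is a counterexample.
\begin{itemize}
\item Take $a=b=c=0$ and the orthogonal polygon with vertices $(0,3),(4,3),(4,0),(2,0),(2,\tfrac12),(0,\tfrac12),(0,1),(1,1),(1,2),(0,2)$. It lies in $\{x\ge0,y\ge0\}$, so $\ell\equiv0$ underestimates $xy$.
\item The vertices $(0,3)$ and $(0,1)$ (both NW) and $(2,0)$ (SW) are contacts. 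None is the apex, and they are not collinear.
\item Their triangle meets $P_{\perp}$ in a set with nonempty interior, e.g. near $(0.1,2.5)$.
\end{itemize}
Judged by contact geometry alone, this $(\mathrm{NW},\mathrm{NW},\mathrm{SW})$ triple therefore yields a genuine two-dimensional KKT region. It is excluded only because $(0,1)$ lies strictly between $(0,\tfrac12)$ and $(0,3)$ on a vertical segment. Hence $(0,1)$ is replaceable and does not belong to $G_{\perp}(P_{\perp})$.

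The missing lemma, which the paper uses, is this: two vertices of the same type on a common horizontal or vertical line cannot both lie in $G_{\perp}(P_{\perp})$. One of them lies strictly between the other and a point on its own incident edge, so it is replaceable.

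At $\kappa=0$, same-type NW contacts lie on a single boundary ray, and so do same-type SE contacts. The lemma therefore disposes of the four mixed combinations $(\mathrm{NW},\mathrm{NW},\mathrm{NE})$, $(\mathrm{NW},\mathrm{NW},\mathrm{SW})$, $(\mathrm{NE},\mathrm{SE},\mathrm{SE})$, $(\mathrm{SW},\mathrm{SE},\mathrm{SE})$. It is also exactly what excludes the horizontal and vertical pairs in (ii) and (iii). Your Step 3 observes that at $\kappa=0$ such pairs are axis-aligned, but then defers to Step 2, which does not handle them.

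There are two smaller issues.
\begin{itemize}
\item Your first-order conditions in Step 1 are too weak at $\kappa=0$. They permit an NE contact at the apex when $P_{\perp}\subseteq\{x\ge b,y\ge a\}$. An apex contact together with contacts on both rays gives a nondegenerate right triangle, not a degenerate one. Use instead that $P_{\perp}$ contains the whole local quadrant at a convex vertex. This shows NE vertices cannot be contacts when $P_{\perp}\subseteq\{x\ge b,y\ge a\}$, and SW vertices cannot be contacts in the opposite quadrant, so NE and SW contacts never coexist.
\item \Cref{prop:unique_contact_triple} does not let you assume the contact set is exactly the three vertices. At $\kappa=0$ entire edges are contacts, and KKT regions do not require a unique optimal underestimator.
\end{itemize}
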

\begin{proof}
    Consider an affine underestimator $\ell_{a,b,c}$. Its contact points satisfy
    $(x-b)(y-a)=\kappa$, whereas $\orth$ lies in the superlevel set
    $(x-b)(y-a)\geq\kappa$ (cf. \Cref{fig:hyperbola-cases}). The incident horizontal and vertical edges determine
    which vertex types can occur on each branch.
    If $\kappa>0$, contacts can only be \nevertices{} on the lower-left branch or
    \swvertices{} on the upper-right branch. Since $\orth$ is connected, only one
    of these two decreasing branches can contain contacts.
    If $\kappa<0$, contacts can only be \nwvertices{} on the upper-left branch or
    \severtices{} on the lower-right branch; both branches are increasing.
    If $\kappa=0$, simplicity of $\orth$ implies that it lies in one of the two
    feasible quadrants. Thus, in this case \nevertices{} and \swvertices{} cannot both be contacts,
    and contacts of type \nwvertex{} or \severtex{} lie on one boundary ray per type.

    Moreover, two vertices of the same type on a horizontal or vertical line cannot
    both belong to $G_{\perp}(\orth)$: one lies strictly between the other and a point
    on one of its own incident edges and is therefore replaceable.
    Consequently, the first eight combinations in (i) cannot consist entirely of
    contact points: four contain both a \nevertex{} and a \swvertex{}, while the
    other four require $\kappa=0$ and two contacts of the same type on one boundary ray.
    The two remaining combinations are excluded for a different reason. For $\kappa<0$,
    their three contacts lie on a single strictly convex or strictly concave hyperbola
    branch, and their convex hull meets the feasible set only at these contacts.
    For $\kappa=0$, they are collinear. Hence, neither combination yields a
    two-dimensional KKT region.

    For (ii), contacts of the same type lie on an increasing branch when $\kappa<0$
    and on a single boundary ray when $\kappa=0$; $\kappa>0$ is impossible.
    For (iii), they lie on a decreasing branch when $\kappa>0$; when $\kappa=0$,
    contacts on distinct boundary rays have negative connecting slope;
    $\kappa<0$ is impossible. In both cases, horizontal and vertical pairs are
    excluded by replaceability.
\end{proof}

The remaining ten vertex-type combinations cannot be excluded by this geometric argument alone, since each can be realized as a configuration of contact points of an affine underestimator on a suitable orthogonal polygon in one of the cases illustrated in \Cref{fig:hyperbola-cases}.

\subsection{Illustrative example: convex versus nonconvex polygonal domains}
The following example illustrates the benefit of computing convex envelopes directly over nonconvex polygonal domains rather than first replacing these domains by their convex hulls. 
To this end, consider the parametrized orthogonal polygon
$P_{\perp}(p_x,p_y)$, with $p_x,p_y\in(0,2)$, given by the
following ordered vertices:
\begin{equation*}
	 (v_1 \coloneqq (0, 0), v_2 \coloneqq (2, 0), v_3 \coloneqq (2, 2), v_4 \coloneqq (p_x, 2), \bar{v} \coloneqq (p_x, p_y), v_5 \coloneqq (0, p_y)). 
\end{equation*}
In the following, we compare the convex envelope of $f(x, y) = x \cdot y$ over $P_{\perp}(p_x, p_y)$ with the convex envelope of the same function over $\conv(P_{\perp}(p_x, p_y))$.
\Cref{fig:ortho_and_conv_hull} shows the \staircasepolygon and its convex hull for $p_x=1$ and $p_y=1$, where we define $\Delta^{\mathrm{low}} \coloneqq \conv(v_1, v_2, v_5), \Delta^{\mathrm{mid}} \coloneqq \conv(v_2, v_4, v_5)$, and $\Delta^{\mathrm{up}} \coloneqq \conv(v_2, v_3, v_4)$.
\begin{figure}[ht]
    \centering
    \begin{subfigure}{0.45\textwidth}
        \centering
        \def\px{1}
        \def\py{1}
        \begin{tikzpicture}[scale=1.7]
            \fill[pattern=north east lines, pattern color=color3!60]
            (0,0)
            -- (2,0)
            -- (2,2)
            -- (\px,2)
            -- (\px,\py)
            -- (0,\py)
            -- cycle;

            % Labels der Teilgebiete
            \node at (0.65,0.35) {$\Delta^{\mathrm{low}}$};
            \node at (1.3,0.7) {$\Delta^{\mathrm{mid}}$};
            \node at (1.65,1.35) {$\Delta^{\mathrm{up}}$};

            \draw[thick]
            (0,0) node[below left] {$v_1$}
            -- (2,0) node[below right] {$v_2$}
            -- (2,2) node[above right] {$v_3$}
            -- (\px,2) node[above] {$v_4$}
            -- (\px,\py) node[above left] {$\bar{v}$}
            -- (0,\py) node[left] {$v_5$}
            -- cycle;
            
            % Trennlinien
            \draw[thick,dashed]
            (0,\py) -- (2,0);
            \draw[thick,dashed]
            (\px,2) -- (2,0);
        \end{tikzpicture}
        \caption{The \staircasepolygon $P_{\perp}(1, 1)$}
    \end{subfigure}
    \hfill
    \begin{subfigure}{0.45\textwidth}
        \centering
        \def\px{1}
        \def\py{1}
        \begin{tikzpicture}[scale=1.7]
            \fill[pattern=north east lines, pattern color=color3!60]
            (0,0)
            -- (2,0)
            -- (2,2)
            -- (\px,2)
            -- (0,\py)
            -- cycle;

            % Labels der Teilgebiete
            \node at (0.65,0.35) {$\Delta^{\mathrm{low}}$};
            \node at (1.00,1.00) {$\Delta^{\mathrm{mid}}$};
            \node at (1.65,1.35) {$\Delta^{\mathrm{up}}$};

            \draw[thick]
            (0,0) node[below left] {$v_1$}
            -- (2,0) node[below right] {$v_2$}
            -- (2,2) node[above right] {$v_3$}
            -- (\px,2) node[above] {$v_4$};

            \draw[thick]
            (0,\py) node[left] {$v_5$}
            -- (0,0);

            \draw[thick]
            (0,\py) -- node[midway, above left] {$e$} (\px,2);

            % Trennlinien
            \draw[thick,dashed]
            (0,\py) -- (2,0);
            \draw[thick,dashed]
            (\px,2) -- (2,0);
        \end{tikzpicture}
        \caption{The convex hull of $P_{\perp}(1, 1)$}
    \end{subfigure}
    \caption{Comparison of polygonal and polyhedral domains}
    \label{fig:ortho_and_conv_hull}
\end{figure}

For the computation of the convex envelope over $\orth(p_x,p_y)$,
\Cref{thm:reduction} allows us to restrict attention to
$G_{\perp}(\orth(p_x,p_y))=\{v_1,v_2,v_3,v_4,v_5\}$,
since $\bar{v}$ is a reflex vertex and can therefore be excluded. 
Of the ten possible vertex triples, seven can be excluded by
\Cref{prop:positive_slope_contacts} and
\Cref{prop:triple-exclusion}~(i), leaving only three candidates,
namely $\{v_1,v_2,v_5\}$, $\{v_2,v_3,v_4\}$, and $\{v_2,v_4,v_5\}$:

\begin{enumerate}
    \item[(1)] For $J=\{v_1,v_2,v_5\}$, the corresponding active
    constraints in \eqref{eq:convex_program_ortho} yield
    $a=b=c=0$ and hence the affine function
    $f^{\mathrm{low}}(x,y)=0$.

    \item[(2)] For $J=\{v_2,v_4,v_5\}$, we analogously obtain
    \[
    f^{\mathrm{mid}}(x,y)
    =\frac{2p_xp_y}{(p_x-2)p_y+4}x
    +\frac{4p_x}{(p_x-2)p_y+4}y
    -\frac{4p_xp_y}{(p_x-2)p_y+4}.
    \]

    \item[(3)] Finally, for $J=\{v_2,v_3,v_4\}$, we obtain
    $f^{\mathrm{up}}(x,y)=2x+2y-4$.
\end{enumerate}
All three affine functions satisfy the remaining constraints
in \eqref{eq:convex_program_ortho} and therefore coincide with
the convex envelope on their respective KKT regions.
Thus, the convex envelope of $f(x,y)=x\cdot y$ over $\orth(p_x,p_y)$ is given by 
\begin{equation*}
    \conv_{f,\orth(p_x, p_y)}(x, y) = 
    \begin{cases}
        f^{\mathrm{low}}(x, y), & (x, y) \in \Delta^{\mathrm{low}}, \\
        f^{\mathrm{mid}}(x, y), & (x, y) \in \Delta^{\mathrm{mid}} \cap \orth(p_x, p_y), \\
        f^{\mathrm{up}}(x, y), & (x, y) \in \Delta^{\mathrm{up}}. 
    \end{cases}
\end{equation*}

Next, we compute the convex envelope over
$\widehat P\coloneqq\conv(\orth(p_x,p_y))$.
Its generating set is $G(\widehat P)=\{v_1,v_2,v_3,v_4,v_5,e\}$,
where $e=[v_4,v_5]$ is the only positive-slope edge, with
\[
m_e=\frac{2-p_y}{p_x},\qquad q_e=p_y.
\]
By \Cref{prop:kkt_index_reductions}, singleton generator sets and
pairs of vertices can be discarded. Since $\widehat P$ is a polytope,
triples containing an edge can also be excluded; see
\autocite[Observation~4.2]{locatelli2018convex}.
Among the vertex triples, \Cref{prop:positive_slope_contacts}
excludes all those containing one of the pairs
$\{v_1,v_3\}$, $\{v_1,v_4\}$, $\{v_3,v_5\}$, or $\{v_4,v_5\}$,
leaving only $\{v_1,v_2,v_5\}$ and $\{v_2,v_3,v_4\}$.
For vertex--edge pairs, \Cref{prop:positive_slope_contacts} excludes $\{v_1,e\}$ and
$\{v_3,e\}$, while \Cref{prop:exclude_same_line} excludes
$\{v_4,e\}$ and $\{v_5,e\}$, leaving only $\{v_2,e\}$.
In total, only three cases remain:

\begin{enumerate}
    \item[(1)] For $J=\{v_1,v_2,v_5\}$, we again obtain the
    affine function $f^{\mathrm{low}}(x,y)=0$, which coincides
    with the convex envelope on $\Delta^{\mathrm{low}}$.

    \item[(2)] For $J=\{v_2,e\}$, we follow
    \autocite[Appendix~A.2]{locatelli2018convex}.
    Since $e$ is an upper edge and
    $\widehat P\subseteq\{(x,y):x\leq2,\ y\geq0\}$,
    Observation~A.2 therein reduces the KKT-region constraints to the convex
    combination condition and the requirement that the contact
    point lies on $e$. Geometrically, these conditions describe all segments joining
    $v_2$ to points of $e$, whose union is $\Delta^{\mathrm{mid}}$.
    For $(x,y)\neq v_2$, equation~(27) in the reference gives the
    contact coordinate
    \[
    x_e(x,y)=\frac{p_yx+2y-2p_y}{y+m_e(2-x)}.
    \]
    The corresponding optimal affine underestimator agrees with $f$
    at $v_2$ and $\mathbf{x}_e$. Hence, using $f(v_2)=0$ and
    $y=\lambda(x,y)y_e(x,y)$, we obtain
    \[
    \widehat f^{\mathrm{mid}}(x,y)
    = \widehat f^{\mathrm{mid}}(\lambda \mathbf{x}_e + (1 - \lambda)v_2)
    = \lambda f(\mathbf{x}_e) + (1-\lambda)f(v_2)
    =y\,x_e(x,y)
    =\frac{y(p_yx+2y-2p_y)}{y+m_e(2-x)}.
    \]
    The denominator is nonzero on
    $\Delta^{\mathrm{mid}}\setminus\{v_2\}$, and we set
    $\widehat f^{\mathrm{mid}}(v_2)=0$ by continuous extension.

    \item[(3)] For $J=\{v_2,v_3,v_4\}$, we again obtain the
    affine function $f^{\mathrm{up}}(x,y)=2x+2y-4$, which
    coincides with the convex envelope on $\Delta^{\mathrm{up}}$.
\end{enumerate}
Thus, the convex envelope of $f(x,y)=x\cdot y$ over
$\widehat P=\conv(\orth(p_x,p_y))$ is given by
\[
\conv_{f,\widehat P}(x,y)=
\begin{cases}
    f^{\mathrm{low}}(x,y), & (x,y)\in\Delta^{\mathrm{low}},\\
    \widehat f^{\mathrm{mid}}(x,y), & (x,y)\in\Delta^{\mathrm{mid}},\\
    f^{\mathrm{up}}(x,y), & (x,y)\in\Delta^{\mathrm{up}}.
\end{cases}
\]
To quantify the improvement obtained by retaining the nonconvex domain,
we integrate the pointwise difference
$\conv_{f,\orth(p_x,p_y)}-\conv_{f,\widehat P}$
over the original polygon. Since $\orth(p_x,p_y)\subseteq\widehat P$, this difference is
nonnegative. Moreover, the envelopes agree on $\Delta^{\mathrm{low}}$
and $\Delta^{\mathrm{up}}$, so the enclosed volume is
\[
\begin{aligned}
V(p_x,p_y)
&\coloneqq\int_{\orth(p_x,p_y)}
\bigl(\conv_{f,\orth(p_x,p_y)}(x,y)-\conv_{f,\widehat P}(x,y)\bigr)
\,dx\,dy\\
&=\int_{\Delta^{\mathrm{mid}}\cap\orth(p_x,p_y)}
\bigl(f^{\mathrm{mid}}(x,y)-\widehat f^{\mathrm{mid}}(x,y)\bigr)
\,dx\,dy.
\end{aligned}
\]
Parametrize $\Delta^{\mathrm{mid}}$ by
\[
(x,y)=(1-\lambda)v_2+\lambda(t,m_et+p_y),
\qquad 0\leq t\leq p_x,\quad 0\leq\lambda\leq1.
\]
The absolute value of the Jacobian determinant of this parametrization
is $\lambda(2m_e+p_y)$.\\ 
Since $f^{\mathrm{mid}}(t,m_et+p_y)=2t$, the envelope difference
at the parametrized point equals $\lambda m_et(p_x-t)$.
Membership in $\orth(p_x,p_y)$ requires $x\geq p_x$ or $y\leq p_y$,
which is equivalent to $\lambda\leq\rho(t)$, where
\[
\rho(t)\coloneqq\max\left\{
\frac{2-p_x}{2-t},\frac{p_y}{m_et+p_y}\right\}.
\]
Integrating first with respect to $\lambda$ therefore gives
\[
\begin{aligned}
V(p_x,p_y)
&=\int_0^{p_x}\int_0^{\rho(t)}
\lambda^2 m_e(2m_e+p_y)t(p_x-t)\,d\lambda\,dt\\
&=\frac{m_e(2m_e+p_y)}{3}
\int_0^{p_x}t(p_x-t)\rho(t)^3\,dt>0.
\end{aligned}
\]
Thus, for every $p_x,p_y\in(0,2)$, convexifying the domain first
strictly weakens the resulting convex envelope on a subset of
the original polygon of positive area contained in
$\Delta^{\mathrm{mid}}\cap\orth(p_x,p_y)$.
For the example $p_x=p_y=1$ in \Cref{fig:ortho_and_conv_hull},
the two expressions defining $\rho$ coincide at $t=1/2$.
By symmetry, we obtain
\[
V(1,1)=2\int_0^{1/2}\frac{t(1-t)}{(1+t)^3}\,dt
=\frac{8}{9}-2\ln\!\left(\frac32\right)
\approx0.07796.
\]
The improvement is also visible directly at the reflex vertex $(1,1)$:
\[
\conv_{f,\orth(1,1)}(1,1)=\frac23,
\qquad
\conv_{f,\widehat P}(1,1)=\frac12.
\]
This example shows that replacing a nonconvex polygon by its convex hull
can lose relaxation strength at points in the original domain.
Computing the convex envelope directly over the polygon avoids this loss.

\subsection{Using Orthogonal Polygons on Nonlinear Programs}
In the present section we discuss how orthogonal polygons arise naturally in the presolving and relaxation of nonlinear mixed-integer programs,
and how they can be exploited to strengthen the convexification of bilinear terms.
Throughout, we consider a mixed-integer nonlinear optimization problem with feasible set
\[
F \subseteq \mathbb{R}^{n} \times \mathbb{Z}^{m}
\]
and focus on one bilinear term $z = x y$ with continuous variables $x$ and $y$. Let $(\ell_x,u_x)$ and $(\ell_y,u_y)$ denote the available variable bounds and let
\[
B := [\ell_x,u_x] \times [\ell_y,u_y]
\]
be the associated bounding box in the $(x,y)$–plane.
The standard McCormick relaxation for $z = x y$ is based on convexifying the graph of the bilinear term over $B$.
Our goal is to replace $B$ by a smaller set $\orth \subseteq B$, which is an orthogonal polygon containing the projection of $F$ onto the $(x,y)$–space,
and then to use our developed theory to work with the convex hull of the graph of $z = x y$ over $\orth$.

\subsubsection{Ortho-Polygons from Optimization-Based Bound Tightening}

Optimization-based bound tightening (OBBT) is a well-established presolving technique in global MINLP solvers, 
see, e.g., \cite{belotti2013mixed,bestuzheva2025-minlp-scip8}.
For our purposes, OBBT provides a systematic way of constructing orthogonal polygons that outer-approximate the projection of $F$ on the $(x,y)$–plane.

Let $X := \operatorname{proj}_{x,y}(F) \subseteq B$ denote the projection of the feasible set onto $(x,y)$. We discretize the interval $[\ell_x,u_x]$ into stripes
\[
\ell_x = \alpha_0 < \alpha_1 < \dots < \alpha_K = u_x,
\]
and, for each stripe $I_k := [\alpha_{k-1},\alpha_k]$, we solve the two optimization problems
\[
y_k^{\min} := \inf\{\, y \mid (x,y,w) \in F,\; x \in I_k \,\},\qquad
y_k^{\max} := \sup\{\, y \mid (x,y,w) \in F,\; x \in I_k \,\}.
\]
Each pair $(y_k^{\min},y_k^{\max})$ defines a vertical strip
\[
R_k := I_k \times [y_k^{\min},y_k^{\max}],
\]
which clearly satisfies $X \cap (I_k \times \mathbb{R}) \subseteq R_k$. Thus the union $\bigcup_{k=1}^K R_k$ is an outer approximation of $X$.
Repeating the same construction in the $y$–direction, using a partition
\[
\ell_y = \beta_0 < \beta_1 < \dots < \beta_L = u_y
\]
and solving analogous optimization problems for $x$ at fixed $y$–stripes, yields vertical strips
\[
S_\ell := [x_\ell^{\min},x_\ell^{\max}] \times J_\ell, \qquad J_\ell := [\beta_{\ell-1},\beta_\ell],
\]
whose union $\bigcup_{\ell=1}^L S_\ell$ again contains $X$. The intersection
\[
\orth := \Bigl(\bigcup_{k=1}^K R_k\Bigr) \cap \Bigl(\bigcup_{\ell=1}^L {S}_{\ell}\Bigr)
\]
is still an outer approximation of $X$ and defines an orthogonal polygon.

The set $\orth$ is then used as the domain on which the convex hull of the graph of $z = x y$ is constructed.

Figure~\ref{fig:stair-obbt} illustrates this construction.
The light gray rectangle is the bounding box $B$,
the dashed lines divide the $x$- and $y$–stripes, the curved line is the true projection $X$,
and the blue orthogonal polygon is the outer approximation $\orth$ obtained from the stripe-wise OBBT bounds.

\begin{figure}[htbp]
  \centering
  \begin{tikzpicture}[scale=1.5]
    % --- 1. Axes ---
    \draw[->, thick] (0,0) -- (4.8,0) node[below] {$x$};
    \draw[->, thick] (0,0) -- (0,3.5) node[left] {$y$};

    % --- 2. Grid (Horizontal & Vertical Dashed Lines) ---
    % Grid step size is 0.5 visual units
    \foreach \x in {0.5, 1.0, ..., 4.0} {
      \draw[gray!30, dashed, thin] (\x, 0) -- (\x, 3.2);
    }
    \foreach \y in {0.5, 1.0, ..., 3.0} {
      \draw[gray!30, dashed, thin] (0, \y) -- (4.5, \y);
    }

    % --- 3. Bounding Box B ---
    % A large rectangle enclosing the whole area of interest
    \draw[gray!80, thick] (0.5,0.5) rectangle (4.5,3.0);
    \node[gray!90, anchor=south east] at (4.5,3.0) {$B$};

    % --- 4. Staircase Outer Approximation S ---
    % Constructed as a union of grid cells that encompass the nonconvex shape X.
    % Notice the "hole" at the bottom (x=2.0 to 3.0) reflecting the nonconvexity.
    \fill[color1!10]
      (1.1, 0.68) -- (1.5, 0.68) -- % Left base
      (1.5, 0.68) -- (1.5, 0.7) -- % Left step up
      (1.5, 0.7) -- (2.0, 0.7) -- % Left step up
      (2.0, 0.7) -- (2.0, 1.1) -- % Left Arch (Nonconvex step up)
      (2.0, 1.1) -- (3.0, 1.1) -- % The Arch (Nonconvex step up)
      (3.0, 1.1) -- (3.0, 0.7) -- % Right step down
      (3.0, 0.7) -- (3.5, 0.7) -- % Bottom Right base
      (3.5, 0.7) -- (3.5, 0.68) -- % Right step down
      (3.5, 0.68) -- (3.9, 0.68) -- % Right base
      (3.9, 0.68) -- (3.9, 1.0) -- % Right side up
      (3.9, 1.0) -- (4.0, 1.0) -- % Right side up
      (4.0, 1.0) -- (4.0, 2.5) -- % Right side up
      (4.0, 2.5) -- (3.7, 2.5) -- % Right shoulder
      (3.7, 2.5) -- (3.7, 2.6) -- % Right Peak
      (3.7, 2.6) -- (3.5, 2.6) -- % Top Peak (wide)
      (3.5, 2.6) -- (3.5, 2.77) -- % Top Peak (wide)
      (3.5, 2.77) -- (3.0, 2.77) -- % Top Peak (wide)
      (3.0, 2.77) -- (3.0, 2.8) -- % Top Peak (wide)
      (3.0, 2.8) -- (2.0, 2.8) -- % Top Peak (wide)
        (2.0, 2.8) -- (2.0, 2.77) -- % Left Peak (wide)
        (2.0, 2.77) -- (1.5, 2.77) -- % Left Peak (wide)
        (1.5, 2.77) -- (1.5, 2.6) -- % Left Peak (wide)
        (1.5, 2.6) -- (1.3, 2.6) -- % Left shoulder
        (1.3, 2.6) -- (1.3, 2.5) -- % Left side down
        (1.3, 2.5) -- (1.0, 2.5) -- % Left side down
      (1.0, 2.5) -- (1.0, 1.0) -- % Left side down
        (1.0, 1.0) -- (1.1, 1.0) -- % Left side down
      cycle;

    % Outline of S
    \draw[color1, thick]
      (1.1, 0.68) -- (1.5, 0.68) -- % Left base
      (1.5, 0.68) -- (1.5, 0.7) -- % Left step up
      (1.5, 0.7) -- (2.0, 0.7) -- % Left step up
      (2.0, 0.7) -- (2.0, 1.1) -- % Left Arch (Nonconvex step up)
      (2.0, 1.1) -- (3.0, 1.1) -- % The Arch (Nonconvex step up)
      (3.0, 1.1) -- (3.0, 0.7) -- % Right step down
      (3.0, 0.7) -- (3.5, 0.7) -- % Bottom Right base
      (3.5, 0.7) -- (3.5, 0.68) -- % Right step down
      (3.5, 0.68) -- (3.9, 0.68) -- % Right base
      (3.9, 0.68) -- (3.9, 1.0) -- % Right side up
      (3.9, 1.0) -- (4.0, 1.0) -- % Right side up
      (4.0, 1.0) -- (4.0, 2.5) -- % Right side up
      (4.0, 2.5) -- (3.7, 2.5) -- % Right shoulder
      (3.7, 2.5) -- (3.7, 2.6) -- % Right Peak
      (3.7, 2.6) -- (3.5, 2.6) -- % Top Peak (wide)
      (3.5, 2.6) -- (3.5, 2.77) -- % Top Peak (wide)
      (3.5, 2.77) -- (3.0, 2.77) -- % Top Peak (wide)
      (3.0, 2.77) -- (3.0, 2.8) -- % Top Peak (wide)
      (3.0, 2.8) -- (2.0, 2.8) -- % Top Peak (wide)
        (2.0, 2.8) -- (2.0, 2.77) -- % Left Peak (wide)
        (2.0, 2.77) -- (1.5, 2.77) -- % Left Peak (wide)
        (1.5, 2.77) -- (1.5, 2.6) -- % Left Peak (wide)
        (1.5, 2.6) -- (1.3, 2.6) -- % Left shoulder
        (1.3, 2.6) -- (1.3, 2.5) -- % Left side down
        (1.3, 2.5) -- (1.0, 2.5) -- % Left side down
      (1.0, 2.5) -- (1.0, 1.0) -- % Left side down
        (1.0, 1.0) -- (1.1, 1.0) -- % Left side down
      cycle;

    \node[color1] at (1.2, 2.8) {$\orth$};

    % --- 5. Nonlinear Nonconvex Area X ---
    % An "Arch" or "Bridge" shape
    \filldraw[fill=color1!40, draw=color1!80, thick, fill opacity=0.7]
      plot[smooth cycle, tension=0.6] coordinates {
        (1.3, 0.7)  % Bottom Left
        (1.1, 2.3)  % Top Left
        (2.5, 2.8)  % Top Peak
        (3.9, 2.3)  % Top Right
        (3.7, 0.7)  % Bottom Right
        (2.5, 1.4)  % Inner Arch Point (Creates nonconvexity)
      };

    \node[white] at (2.5, 2.2) {\textbf{$X$}};

  \end{tikzpicture}
  \caption{Orthogonal polygon $\orth$ approximating a nonconvex nonlinear region $X$. Note the tighter bound at the bottom $(x \in [2,3])$ capturing the concavity.}
  \label{fig:stair-obbt}
\end{figure}

\subsubsection{Orthogonal polygons from norm-induced cuts}

Norm-induced cuts, as introduced in~\cite{goess2026norm},
provide a second mechanism for creating orthogonal polygons.
The basic idea is to exploit local information about infeasibility in the $(x,y)$–plane.
Suppose that we have a function $f : \mathbb{R}^2 \rightarrow \mathbb{R}$ that is
Lipschitz continuous with respect to some norm~$\|\cdot\|$ on $\mathbb{R}^2$
and maps from the $(x, y)$-plane into the reals.
If $f(x, y) \leq 0$ marks a constraint of the optimization problem
and~$(\bar{x},\bar{y}) \in B$ is an infeasible point in this regard, i.e.,
$f(\bar{x}, \bar{y}) > 0$, a norm-induced cut excludes the open ball
\[
\mathcal{B}(\bar{x},\bar{y};r) := \bigl\{ (x,y) \in \mathbb{R}^2 \,\bigm|\,
\|(x,y) - (\bar{x},\bar{y})\| < r \bigr\}
\]
from the box~$B$ without affecting feasible projection points.
That is, it holds true that
\[
X \cap \mathcal{B}(\bar{x},\bar{y};r) = \emptyset,
\]
%Such a ball can be obtained, for instance, by solving a distance-minimization problem that computes the minimal norm-distance from $(\bar{x},\bar{y})$ to $X$.
%The classical theory of intersection cuts shows how general convex sets, including norm balls,
%can be used to derive valid inequalities for mixed-integer problems \cite{goess2026norm}.
and we are concerned with the following relaxation of~$X$:
\[
X \subseteq B \setminus \mathcal{B}(\bar{x},\bar{y};r).
\]

Let's denote the ball as $\mathcal{B}$ for short
and use~$\partial$ as the boundary operator.
In general, the boundary of $B \setminus \mathcal{B}$ is a
subset of the axis-aligned boundary of~$B$
and the curved boundary of the ball.
If $\mathcal{B} \nsubseteq \mathrm{int}(B)$,
$\partial B \cap \mathcal{B} \neq \emptyset$
and it follows that $\partial B \nsubseteq \partial (B \setminus \mathcal{B})$.
We refer to~\Cref{fig:norm-cuts} for a visualization.
In this setting, we can obtain an orthogonal polygon when
approximating the circular arc by a sequence of horizontal and vertical segments
that lies inside the ball.
More precisely, we construct a polyline with vertices $(x_i,y_i)$ such that
consecutive vertices differ in exactly one coordinate and
the entire polyline lies in $\mathcal{B}$.
Together with the edges of $B$, this yields a new orthogonal polygon $\orth$ with
\[
X \subseteq \orth \subseteq B.
\]

When several norm-induced cuts are available that correspond to several
balls~$\mathcal{B}(\bar{x}^{(q)},\bar{y}^{(q)};r_q)$, fulfilling the
setting above,
we intersect the corresponding stair\-case approximations
and take the outer boundary of the resulting set.
The resulting orthogonal polygon combines information from OBBT
and from the local norm-based exclusion regions.

Figure~\ref{fig:norm-cuts} visualizes this construction. 
The shaded circle represents a norm ball containing no feasible projections,
while the blue polyline approximates the circular boundary by horizontal
and vertical segments that stay inside the ball,
yielding an orthogonal approximation of the complement.

\begin{figure}[htbp]
  \centering
  \begin{tikzpicture}[scale=1.5]
    % --- 1. Axes ---
    \draw[->, thick] (0,0) -- (4.8,0) node[below] {$x$};
    \draw[->, thick] (0,0) -- (0,3.5) node[left] {$y$};

    % --- 4. The Ball (Infeasible Region) ---
    % Drawn behind the fill slightly or on top with transparency
    % Center (3.8, 2.4), Radius 1.15
    % This circle enters from top right and covers the "missing" steps of S

        % --- 3. Bounding Box B ---
    \draw[gray!80, thick] (0.5,0.5) rectangle (4.5,3.0);
    \node[gray!90, anchor=south east] at (0.9, 3.0) {$B$};

    % --- 5. Staircase Approximation S ---
    % S is the Box B MINUS the inner approximation of the Ball.
    % We construct S by tracing the grid lines that remain feasible.
    % It looks like the box with a "bite" taken out of the top right.
    \fill[color2!10]
      (0.5, 0.5) -- (4.5, 0.5) --
      (4.5, 1.5) -- (4.0, 1.5) -- (4.0, 1.3) -- (3.5, 1.3) --
      (3.5, 1.57) -- (3.0, 1.57) --
        (3.0, 2.0) -- (2.71, 2.0) -- (2.71, 2.5) -- (2.71, 2.5) -- (2.81, 2.5) -- (2.81, 3.0)  -- (0.5, 3.0) --
      cycle;

    % Outline of S
    \draw[color2, thick]
      (0.5, 0.5) -- (4.5, 0.5) --
      (4.5, 1.5) -- (4.0, 1.5) -- (4.0, 1.3) -- (3.5, 1.3) --
      (3.5, 1.57) -- (3.0, 1.57) --
        (3.0, 2.0) -- (2.71, 2.0) -- (2.71, 2.5) -- (2.71, 2.5) -- (2.81, 2.5) -- (2.81, 3.0)  -- (0.5, 3.0) --
      cycle;

    \node[color2] at (2.0, 2.0) {$\orth$};

    \draw[gray!70, thick, dashed] (3.8, 2.4) circle (1.15);
    \node[gray!80] at (4.0, 2.6) {$\mathcal{B}$};

  \end{tikzpicture}
  \caption{Norm-induced cut: a Euclidean ball $\mathcal{B}$ entering the box $B$ is approximated from the inside by removed grid cells, yielding a feasible orthogonal polygon $\orth$.}
  \label{fig:norm-cuts}
\end{figure}

Once such an orthogonal polygon $\orth$ is available, we again take the convex hull of the graph of $z = x y$ over $\orth$.

\subsubsection{Orthogonal polygons from indicator constraints}
\label{sec:indicator}

Indicator constraints are another natural source of orthogonal polygons. They are also known in the literature as semi-continuous variables~\cite{halbig2025diving}.
In many MINLP formulations, especially in portfolio optimization~\cite{perold1984large} and unit commitment problems~\cite{chen2019unit},
certain variables are active only when an associated binary variable takes a prescribed value.
Such relationships are typically encoded by indicator constraints or by big-$M$ formulations.

Consider again a bilinear term $z = x y$ and suppose that the model implies an on/off structure of the form
\[
x = 0 \;\Longrightarrow\; y \in [\underline{y}_0,\overline{y}_0],\qquad
x \neq 0 \;\Longrightarrow\; y \in [\underline{y}_1,\overline{y}_1],
\]
with $\underline{y}_0 \leq \underline{y}_1$ and $\overline{y}_1 \leq \overline{y}_0$.
The corresponding projection $X$ of the feasible set on the $(x,y)$–plane is then contained in the union
\[
X \subseteq \bigl(\{0\} \times [\underline{y}_0,\overline{y}_0]\bigr)
   \cup \bigl([0,u_x] \times [\underline{y}_1,\overline{y}_1]\bigr).
\]
This union has the shape of a vertical line segment at $x=0$ together with a rectangle for $x \geq 0$, and its outer boundary is an orthogonal polygon.

If indicator constraints are explicitly present in the model,
this structure can be recognized by inspecting the logical conditions and the implied variable bounds for active and inactive indicator assignment.
Alternatively, it can be uncovered by a coarse OBBT procedure that considers only two stripes in each direction:
one stripe tightly centered around $x=0$ (or $y=0$) and one stripe covering the remainder of the domain.
Solving the four corresponding OBBT problems yields bounds that distinguish the free and bounded regimes of $y$ with respect to $x$, and vice versa.

Figure~\ref{fig:indicator-stair} shows a typical situation.
When $x = 0$, $y$ can take values in a wide interval (the vertical segment on the $y$–axis). When $x$ is nonzero,
$y$ is restricted to a smaller range (the rectangle on the right).
The union of these two components is an orthogonal polygon which can serve as the domain $\orth$ for convexifying $z = x y$.
Using the convex hull of the graph over this $\orth$ takes into account that large values of $y$ are incompatible with $x \neq 0$,
which cannot be captured by a simple box-based McCormick relaxation.

\begin{figure}[htbp]
  \centering
  \begin{tikzpicture}[scale=1.5]
    % --- 1. Axes ---
    \draw[->, thick] (0,0) -- (4.8,0) node[below] {$x$};
    \draw[->, thick] (0,0) -- (0,3.5) node[left] {$y$};

        % --- 3. Bounding Box B ---
    \draw[gray!80, thick] (0.0,0.0) rectangle (4.5,3.0);
    \node[gray!90, anchor=south east] at (0.9, 2.7) {$B$};

    % --- 5. Staircase Approximation S ---
    % S is the Box B MINUS the inner approximation of the Ball.
    % We construct S by tracing the grid lines that remain feasible.
    % It looks like the box with a "bite" taken out of the top right.
    \fill[color6!10]
      (0.0, 0.0) -- (0.0, 3.0) -- (0.0, 1.0) -- (4.5, 1.0) -- (4.5, 0.0) --
      cycle;

    % Outline of S
    \draw[color6, thick]
      (0.0, 0.0) -- (0.0, 3.0) -- (0.0, 1.0) -- (4.5, 1.0) -- (4.5, 0.0) --
      cycle;

    \node[color6] at (1.6, 0.5) {$\orth$};

  \end{tikzpicture}
  \caption{Orthogonal polygon arising from an indicator constraint: for $x = 0$ the variable $y$ is essentially free in a wide interval, whereas for $x \neq 0$ it is restricted to a smaller range.}
  \label{fig:indicator-stair}
\end{figure}

\begin{remark}
Strictly speaking, the orthogonal polygon arising from an indicator constraint is a degenerate
orthogonal polygon: the segment $\{0\} \times [\underline{y}_0,\overline{y}_0]$
has empty interior, so the boundary traversal visits the edge at $x = 0$ twice
and the polygon is not simple.
This is not a restriction for the computational study. For any $\varepsilon > 0$, replacing the
segment by the thin stripe $[0,\varepsilon] \times [\underline{y}_0,\overline{y}_0]$
yields a simple orthogonal polygon $\orth^\varepsilon \supseteq X$ to which our
theory applies, and $\orth^\varepsilon$ decreases monotonically to the
degenerate polygon $\orth^0$ as $\varepsilon \downarrow 0$. Since $(x,y) \mapsto xy$
is continuous and the sets are compact, the convex hulls of the graphs satisfy
$\operatorname{conv}\operatorname{gr}(xy;\orth^\varepsilon) \to
\operatorname{conv}\operatorname{gr}(xy;\orth^0)$ in the Hausdorff distance,
so the relaxation obtained for a sufficiently small $\varepsilon$ is arbitrarily
close to the one induced by the on/off structure. In our computational study we
therefore always work with $\orth^\varepsilon$ for a small fixed~$\varepsilon > 0$.
\end{remark}
\section{Computational Study}
\label{sec:computational_study}

In this section, we present a computational study to evaluate the practical impact of the
orthogonal polygon-based convexification techniques developed in the preceding sections.
The goal is to assess whether the tighter relaxations obtained from Ortho-Locatelli cuts
translate into measurable improvements in solver performance on instances from the MINLPLib~\cite{minlplib}.

\subsection{Implementation and Solver Framework}

All methods are implemented within the open-source solver \texttt{alpaca} (Adaptive Linear Piecewise Approximation with Combinatorial Augmentation), publicly available at
$$\texttt{https://github.com/utnopt/alpaca}.$$
The framework is written in Python~3.12.11 and utilizes Gurobi Optimizer version~13.0.0 as the underlying MIP solver.

\paragraph{Preprocessing Pipeline.}
The solution process for every instance follows a pipeline to prepare the model for cut generation. First, all multilinear terms are decomposed into chains of bilinear products. Second, we apply OBBT to all variables involved in bilinear expressions. This step reduces the initial bounding box for each bilinear term, providing a tight domain for the subsequent geometric analysis. A configurable total time budget is distributed equally across all projection subproblems: for the general grid-scanning procedure, the per-subproblem limit is the total budget divided by the product of the number of bilinear expressions, the number of grid intervals, and a factor of two (for the two coordinate directions); for the indicator procedure, the budget is split evenly between the two required subproblems per bilinear expression.

\paragraph{Domain Projection via Grid Scanning.}
For each bilinear expression $z = xy$, the feasible set is projected onto the $(x,y)$-plane to construct an orthogonal polygon. This projection is obtained via OBBT-based optimization subproblems that scan the variable domains. The algorithm proceeds in three phases, illustrated in Figure~\ref{fig:grid-scanning}.

In the first phase, boundary scanning, the domain of each variable is discretized into $N$ evenly spaced points (gray grid lines in Figure~\ref{fig:grid-scanning}), producing $N{-}1$ intervals per axis. Four sequential optimization passes are executed over the underlying model with all its nonlinear constraints. The first pass fixes $x$ to successive intervals $[x_i, x_{i+1}]$ and maximizes $y$, recording the objective bound as the piecewise-constant upper $y$-boundary over that interval. The second pass fixes $x$ to the same intervals and minimizes $y$, yielding the lower $y$-boundary. After resetting the bounds on $x$, the third pass fixes $y$ to successive intervals and maximizes $x$ to determine the right $x$-boundary, and the fourth pass minimizes $x$ over the same $y$-intervals to determine the left $x$-boundary. The boundary values are stored as condition--value pairs, where each condition is an interval membership test.

In the second phase, feasibility classification, the algorithm evaluates every point on the refined grid, which consists of the original equidistant points augmented by all boundary values discovered during scanning (dotted grid lines in Figure~\ref{fig:grid-scanning}). A grid point $(x_g, y_g)$ is classified as feasible if and only if it satisfies all four boundary constraints simultaneously: $y_g$ does not exceed the upper $y$-boundary at $x_g$, $y_g$ is not below the lower $y$-boundary at $x_g$, $x_g$ does not exceed the right $x$-boundary at $y_g$, and $x_g$ is not below the left $x$-boundary at $y_g$.

In the third phase, boundary tracing, the algorithm extracts the ordered vertices of the orthogonal polygon enclosing the feasible grid points using a left-hand rule for maze traversal. It observes all neighboring vertices and classifies them as feasible or infeasible. Based on the last direction of travel, starting left and proceeding clock-wise it finds the first feasible grid point and proceeds in this direction.

For the indicator test configuration described in Section~\ref{sec:indicator}, the same three-phase procedure is applied but with a specialized grid (see Figure~\ref{fig:indicator-polygon}). Rather than using $N$ equidistant points, the grid is constructed from only two subproblems: the lower bound of $x$ is perturbed upward by a small tolerance $\mu$, and $y$ is maximized to obtain $y^*_{ub}$; then the lower bound of $y$ is perturbed upward by $\mu$, and $x$ is maximized to obtain $x^*_{ub}$. The resulting degenerate polygon consists of the vertices $(x_{lb},y_{lb})$, $(x_{lb},y_{ub})$, $(x_{lb},y^*_{ub})$, $(x^*_{ub},y^*_{ub})$, $(x^*_{ub},y_{lb})$, $(x_{ub},y_{lb})$ connected by edges in this sequence, forming zero-width spikes along the bounding box edges.

\newcommand{\mazenode}[8][1]{
  % #1 = scale (optional, default=1)
  % #2 = x position
  % #3 = y position
  % #4 = wall North (0=open, 1=closed)
  % #5 = wall East  (0=open, 1=closed)
  % #6 = wall South (0=open, 1=closed)
  % #7 = wall West  (0=open, 1=closed)
  % #8 = heading angle (0=E, 90=N, 180=W, 270=S, or empty for no arrow)
  \begin{scope}[shift={(#2*#1,#3*#1)}, scale=#1]
    \fill[gray!10] (0,0) rectangle (1,1);
    \ifnum#4=1
      \draw[color3, line width=1pt] (0,1) -- (1,1);
    \else
      \draw[color11, line width=1pt] (0,1) -- (1,1);
    \fi
    \ifnum#5=1
      \draw[color3, line width=1pt] (1,0) -- (1,1);
    \else
      \draw[color11, line width=1pt] (1,0) -- (1,1);
    \fi
    \ifnum#6=1
      \draw[color3, line width=1pt] (0,0) -- (1,0);
    \else
      \draw[color11, line width=1pt] (0,0) -- (1,0);
    \fi
    \ifnum#7=1
      \draw[color3, line width=1pt] (0,0) -- (0,1);
    \else
      \draw[color11, line width=1pt] (0,0) -- (0,1);
    \fi
    \if\relax\detokenize{#8}\relax\else
      \draw[-stealth, color4, line width=1.5pt]
        (0.5,0.5) -- +(#8:0.35);
    \fi
  \end{scope}
}

\begin{figure}[t]
\centering
\begin{subfigure}[t]{0.48\textwidth}
\centering
\begin{tikzpicture}[
    every node/.style={font=\small},
    grid line/.style={gray!30, thin},
]
    % Shade feasible region (background)
    % pattern={Lines[angle=0, distance=2pt, line width=0.4pt]}, pattern 
    \fill[color=color0, opacity=0.4]
        (1.3,0) -- (3.8,0) -- (3.8,1) -- (5.0,1) -- (5.0,2)
        -- (4.8,2) -- (4.8,3) -- (4.2,3) -- (4.2,4) -- (3.2,4)
        -- (3.2,5) -- (1.2,5) -- (1.2,4) -- (0.7,4) -- (0.7,3)
        -- (0.3,3) -- (0.3,2) -- (0.5,2) -- (0.5,1) -- (1.3,1)
        -- cycle;

    % pattern={Lines[angle=90, distance=2pt, line width=0.4pt]}, pattern 
    \fill[color=color5, opacity=0.4]
        (0, 3.8) -- (1, 3.8) -- (1, 4.8) -- (2, 4.8) -- (2, 5.0) -- (3, 5.0)
        -- (3, 4.5) -- (4, 4.5) -- (4, 3.5) -- (5, 3.5)
        -- (5, 1.5) -- (4, 1.5) -- (4, 0.8) -- (3, 0.8) -- (3, 0.3) -- (2, 0.3)
        -- (2, 0.5) -- (1, 0.5) -- (1, 1.0) -- (0, 1.0)
        -- cycle;

    % y-grid tick labels
    \foreach \y in {0,1,...,5} {
        \node[left, font=\tiny] at (0, \y) {$y_{\y}$};
    }

    % x-grid tick labels
    \foreach \x in {0,1,...,5} {
        \node[below, font=\tiny] at (\x, 0) {$x_{\x}$};
    }

    % Original grid lines (light gray)
    \foreach \x in {0,1,...,5} {
        \draw[gray!25, thin] (\x, 0) -- (\x, 5.2);
    }
    \foreach \y in {0,1,...,5} {
        \draw[gray!25, thin] (0, \y) -- (5.2, \y);
    }

    % Added horizontal grid lines from y bounds (slate dotted)
    \foreach \yval in {0.3, 0.5, 0.8, 1.0, 1.5, 3.5, 3.8, 4.5, 4.8, 5.0} {
        \draw[color8, thin, densely dotted] (0, \yval) -- (5.2, \yval);
    }

    % Added vertical grid lines from x bounds (slate dotted)
    \foreach \xval in {0.3, 0.5, 0.7, 1.3, 1.2, 3.2, 4.2, 3.8, 4.8, 5.0} {
        \draw[color8, thin, densely dotted] (\xval, 0) -- (\xval, 5.2);
    }

    % Vertical bars: max_x (right bounds) - solid color0
    \draw[color0] (3.8, 0) -- (3.8, 1);
    \draw[color0] (5.0, 1) -- (5.0, 2);
    \draw[color0] (4.8, 2) -- (4.8, 3);
    \draw[color0] (4.2, 3) -- (4.2, 4);
    \draw[color0] (3.2, 4) -- (3.2, 5);

    % Vertical bars: min_x (left bounds) - dashed color0
    \draw[color0] (1.3, 0) -- (1.3, 1);
    \draw[color0] (0.5, 1) -- (0.5, 2);
    \draw[color0] (0.3, 2) -- (0.3, 3);
    \draw[color0] (0.7, 3) -- (0.7, 4);
    \draw[color0] (1.2, 4) -- (1.2, 5);

    % Horizontal bars: max_y (upper bounds) - solid color5
    \draw[color5] (0, 3.8) -- (1, 3.8);
    \draw[color5] (1, 4.8) -- (2, 4.8);
    \draw[color5] (2, 5.0) -- (3, 5.0);
    \draw[color5] (3, 4.5) -- (4, 4.5);
    \draw[color5] (4, 3.5) -- (5, 3.5);

    % Horizontal bars: min_y (lower bounds) - dashed color5
    \draw[color5] (0, 1.0) -- (1, 1.0);
    \draw[color5] (1, 0.5) -- (2, 0.5);
    \draw[color5] (2, 0.3) -- (3, 0.3);
    \draw[color5] (3, 0.8) -- (4, 0.8);
    \draw[color5] (4, 1.5) -- (5, 1.5);

    % color11 dots - in BOTH regions (feasible intersection)
    \foreach \point in {
        (2,0.3), (3,0.3),
        (1.3,0.5), (2,0.5), (3,0.5),
        (1.3,0.8), (2,0.8), (3,0.8), (3.2,0.8), (3.8,0.8),
        (0.5,1), (0.7,1), (1,1), (1.2,1), (1.3,1), (2,1), (3,1), (3.2,1), (3.8,1), (4,1),
        (0.5,1.5), (0.7,1.5), (1,1.5), (1.2,1.5), (1.3,1.5), (2,1.5), (3,1.5), (3.2,1.5), (3.8,1.5), (4,1.5), (4.2,1.5), (4.8,1.5), (5,1.5),
        (0.3,2), (0.5,2), (0.7,2), (1,2), (1.2,2), (1.3,2), (2,2), (3,2), (3.2,2), (3.8,2), (4,2), (4.2,2), (4.8,2), (5,2),
        (0.3,3), (0.5,3), (0.7,3), (1,3), (1.2,3), (1.3,3), (2,3), (3,3), (3.2,3), (3.8,3), (4,3), (4.2,3), (4.8,3),
        (0.7,3.5), (1,3.5), (1.2,3.5), (1.3,3.5), (2,3.5), (3,3.5), (3.2,3.5), (3.8,3.5), (4,3.5), (4.2,3.5),
        (0.7,3.8), (1,3.8), (1.2,3.8), (1.3,3.8), (2,3.8), (3,3.8), (3.2,3.8), (3.8,3.8), (4,3.8),
        (1,4), (1.2,4), (1.3,4), (2,4), (3,4), (3.2,4), (3.8,4), (4,4),
        (1.2,4.5), (1.3,4.5), (2,4.5), (3,4.5), (3.2,4.5),
        (1.2,4.8), (1.3,4.8), (2,4.8), (3,4.8),
        (2,5), (3,5)
    } {
        \fill[color11] \point circle (1pt);
    }

    % color3 dots - NOT in both regions (infeasible)
    \foreach \point in {
        (0,0), (0.3,0), (0.5,0), (0.7,0), (1,0), (1.2,0), (1.3,0), (2,0), (3,0), (3.2,0), (3.8,0), (4,0), (4.2,0), (4.8,0), (5,0),
        (0,0.3), (0.3,0.3), (0.5,0.3), (0.7,0.3), (1,0.3), (1.2,0.3), (1.3,0.3), (3.2,0.3), (3.8,0.3), (4,0.3), (4.2,0.3), (4.8,0.3), (5,0.3),
        (0,0.5), (0.3,0.5), (0.5,0.5), (0.7,0.5), (1,0.5), (1.2,0.5), (3.2,0.5), (3.8,0.5), (4,0.5), (4.2,0.5), (4.8,0.5), (5,0.5),
        (0,0.8), (0.3,0.8), (0.5,0.8), (0.7,0.8), (1,0.8), (1.2,0.8), (4,0.8), (4.2,0.8), (4.8,0.8), (5,0.8),
        (0,1), (0.3,1), (4.2,1), (4.8,1), (5,1),
        (0,1.5), (0.3,1.5),
        (0,2),
        (0,3), (5,3),
        (0,3.5), (0.3,3.5), (0.5,3.5), (4.8,3.5), (5,3.5),
        (0,3.8), (0.3,3.8), (0.5,3.8), (4.2,3.8), (4.8,3.8), (5,3.8),
        (0,4), (0.3,4), (0.5,4), (0.7,4), (4.2,4), (4.8,4), (5,4),
        (0,4.5), (0.3,4.5), (0.5,4.5), (0.7,4.5), (1,4.5), (3.8,4.5), (4,4.5), (4.2,4.5), (4.8,4.5), (5,4.5),
        (0,4.8), (0.3,4.8), (0.5,4.8), (0.7,4.8), (1,4.8), (3.2,4.8), (3.8,4.8), (4,4.8), (4.2,4.8), (4.8,4.8), (5,4.8),
        (0,5), (0.3,5), (0.5,5), (0.7,5), (1,5), (1.2,5), (1.3,5), (3.2,5), (3.8,5), (4,5), (4.2,5), (4.8,5), (5,5)
    } {
        \fill[color3] \point circle (1pt);
    }

    \draw[very thick, color6] 
    (2,5) -- (3,5) -- 
    (3,4.5) -- (3.2,4.5) -- (3.2,4) --
    (4,4) -- (4,3.5) -- 
    (4.2,3.5) -- (4.2,3) -- (4.8,3) -- (4.8,2) --
    (5,2) -- (5,1.5) -- (4,1.5) --
    (4,1) -- (3.8,1) -- (3.8,0.8) -- (3,0.8) --
    (3,0.3) -- 
    (2,0.3) -- (2,0.5) --
    (1.3,0.5) -- (1.3,1) -- 
    (0.5,1) -- (0.5,2) -- 
    (0.3,2) -- (0.3,3) -- 
    (0.7,3) -- (0.7,3.8) -- (1,3.8) --
    (1,4) -- (1.2,4) -- 
    (1.2,4.8) -- (2,4.8) -- cycle;

    % Axes
    \draw[-stealth] (-0.1,0) -- (5.5,0) node[right] {$x$};
    \draw[-stealth] (0,-0.1) -- (0,5.5) node[above] {$y$};

    \mazenode[0.25]{2.3}{13.5}{0}{0}{0}{1}{90}
    \mazenode[0.25]{11.5}{19.5}{1}{1}{0}{0}{0}
    \mazenode[0.25]{15.5}{5.5}{0}{0}{0}{0}{180}

\end{tikzpicture}
\caption{Solid gray lines: equidistant grid ($N=6$). Dotted lines: refined grid from boundary values. Gold segments: $x$-bounds. Purple segments: $y$-bounds. Dark red dots: feasible grid points. Teal dots: infeasible points. green polygon: traced boundary~$\orth$. Brown arrows: walking direction for maze traversal. Surrounding color indicates feasible or infeasible next directions.}
\label{fig:grid-scanning}
\end{subfigure}
\hfill
\begin{subfigure}[t]{0.48\textwidth}
\centering
\begin{tikzpicture}
    % Bounding box
    \draw[dashed, gray!70, thick] (0,0) rectangle (5,5);
    
    % Main rectangle body (shaded)
    \fill[color10!15] (0,0) rectangle (3.2,2.8);
    
    % Draw the degenerate polygon path
    \draw[thick, color10] (0,0) -- (0,5) -- (0,2.8) 
        -- (3.2,2.8) -- (3.2,0) -- (5,0) -- (0,0);
    
    % Vertices
    \fill[color6] (0,0) circle (3pt) 
        node[above right, font=\scriptsize] {$(x_{lb},y_{lb})$};
    \fill[color6] (0,5) circle (3pt) 
        node[above right, font=\scriptsize] {$(x_{lb},y_{ub})$};
    \fill[color6] (0,2.8) circle (3pt) 
        node[above right, font=\scriptsize] {$(x_{lb},y^*_{ub})$};
    \fill[color6] (3.2,2.8) circle (3pt) 
        node[above right, font=\scriptsize] {$(x^*_{ub},y^*_{ub})$};
    \fill[color6] (3.2,0) circle (3pt)
        node[above left, font=\scriptsize] {$(x^*_{ub},y_{lb})$};
    \fill[color6] (5,0) circle (3pt) 
        node[above left, font=\scriptsize] {$(x_{ub},y_{lb})$};
    
    % Dashed reference lines
    \draw[thin, dashed, gray] (3.2,0) -- (3.2,2.8);
    \draw[thin, dashed, gray] (0,2.8) -- (3.2,2.8);

    % y-grid tick labels
    \foreach \y in {0,1,...,5} {
        \node[left, font=\tiny] at (0, \y) {\phantom{$y_{\y}$}};
    }

    % x-grid tick labels
    \foreach \x in {0,1,...,5} {
        \node[below, font=\tiny] at (\x, 0) {\phantom{$x_{\x}$}};
    }
    
    % Axis arrows
    \draw[-stealth] (-0.1,-0) -- (5.5,-0) node[right, font=\small] {$x$};
    \draw[-stealth] (-0,-0.1) -- (-0,5.5) node[above, font=\small] {$y$};
\end{tikzpicture}
\caption{Dashed box: original bounding box $[x_{lb},x_{ub}]\times[y_{lb},y_{ub}]$. Green polygon: degenerate indicator polygon with zero-width spikes along two edges.}
\label{fig:indicator-polygon}
\end{subfigure}
\caption{Domain projection via grid scanning.}
\label{fig:domain-projection}
\end{figure}

\paragraph{Configurations.}
We compare four configurations based on the handling of bilinear terms. The first configuration, \baseshort\, represents the native nonlinear handling capabilities of the solver: multilinear terms are reformulated into bilinear chains and passed to Gurobi as nonlinear constraints after OBBT, with no additional cuts generated. The second configuration, \locatellishort\, computes the full grid-scanning projection and applies a two-dimensional convex hull computation to the polygon vertices, yielding a convex polytope~$P$. Cuts are then generated exclusively from vertex triples of~$P$ as linear inequalities. Crucially, these linear cuts approximate but do not recover the three-dimensional convex hull of $\{(x,y,xy) : (x,y) \in P\}$, since the exact convex hull over a two-dimensional convex domain is in general a nonlinear set. The third configuration, \stairshort\, uses the full non-convex orthogonal polygon~$\orth$ directly for cut generation, exploiting the theory developed in the present paper. As established in the preceding sections, the three-dimensional convex hull of $\{(x,y,xy) : (x,y) \in \orth\}$ over an orthogonal polygon is a polytope whose facets lie in supporting hyperplanes determined by triples of lifted vertices of $\orth$. The cut generation procedure thus recovers the exact convex hull using only linear inequalities. The fourth configuration, \indicatorshort\, uses the simplified two-subproblem projection (Figure~\ref{fig:indicator-polygon}) for bilinear expressions arising from indicator or semi-continuous variable structures, with cuts generated from the resulting polygon in the same manner as \stairshort\. In all three cut-generating configurations, the bilinear terms are additionally passed to the solver as nonlinear constraints, and the generated cuts serve as valid inequalities that strengthen the relaxation.

\paragraph{Cut Generation.}
Once the domain has been determined, either as a convex polytope~$P$ in the \locatellishort\ configuration or as an orthogonal polygon~$\orth$ in the \stairshort\ and \indicatorshort\ configurations, we proceed with cut generation. Let $V = \{v_1, \ldots, v_k\}$, $k \geq 3$, denote the vertex set of the respective domain. Checkpoints are constructed by sampling evenly spaced points along each edge of the domain.

For every combination of three vertices from~$V$, the algorithm checks collinearity via the cross product (skipping collinear triples) and fits the unique hyperplane $z = ax + by + c$ by solving the $3 \times 3$ linear system determined by requiring $z_m = v_{m,x} \cdot v_{m,y}$ at each of the three vertices. If the system is singular, the triple is skipped. The hyperplane is then validated against all checkpoints: if $ax + by + c \leq xy$ holds at every checkpoint, the constraint $z \geq ax + by + c$ is added as a valid underestimator; conversely, if $ax + by + c \geq xy$ holds everywhere, the constraint $z \leq ax + by + c$ is added as an overestimator. If neither condition holds globally, the triple is discarded. The generated cuts are classified as either overestimators or underestimators and stored with the corresponding bilinear expression.

The key distinction between configurations lies in the vertex set and the validation domain. In \locatellishort\, vertex triples are drawn from the convex polytope~$P$ and checkpoints lie on the edges of~$P$. In \stairshort\ and \indicatorshort\, vertex triples are drawn from the orthogonal polygon~$\orth$ and checkpoints lie on the edges of~$\orth$.

\paragraph{Evaluation Metrics.}
We evaluate the computational impact of the proposed methods using four metrics.

The first metric is \textbf{Domain Volume Reduction}. For each bilinear expression $z = xy$, we compute the three-dimensional volume enclosed between the concave envelope and convex envelope of the bilinear function over the feasible $(x,y)$-domain. In the \locatellishort\ configuration, this is the volume of the three-dimensional convex hull of the surface $\{(x,y,xy) : (x,y) \in P\}$ over the convex polytope. In the \stairshort\ configuration, it is the volume of the three-dimensional convex hull of the same surface over the orthogonal polygon. In the \indicatorshort\ configuration, it is the volume of the three-dimensional convex hull of the same surface over the degenerate indicator polygon.  For reference, we also compute the McCormick relaxation volume over the full bounding box, which evaluates only the McCormick envelope constraints without any domain restriction. All volumes are computed numerically. The reduction from box volume to polytope volume isolates the benefit of OBBT and convex domain projection alone, while the further reduction from polytope volume to polygon volume demonstrates the additional value of non-convex geometric reasoning.

The second metric is \textbf{Root Gap Reduction}, which measures how much the bound obtained at the root node of the branch-and-bound tree improves relative to the baseline configuration. The third metric is the \textbf{Number of Branch-and-Bound Nodes}, a direct indicator of how much enumeration effort is required after the relaxation and cuts have been applied. The fourth metric is \textbf{Solving Time}, capturing the solving time of Gurobi without preprocessing and model buildup time.

\subsection{Results}
This section presents and analyzes the empirical results of our computational experiments across the four evaluation metrics.

\subsubsection{Domain Volume Reduction}
\input{studyplots/plot_volume_reduction_polygon}

Figures~\ref{plot:volume-reduction-polygon} and~\ref{plot:volume-reduction-polytope} present scatter plots of the domain volume reduction achieved by the \indicatorshort{} and \stairshort{} configurations against the \locatellishort{} baseline, measured over the polygon and the convex polytope, respectively. Each point represents one instance; points below the diagonal indicate that the respective configuration yields a larger relaxation volume than \locatellishort{}, while points above the diagonal indicate the opposite.

The scatter plots reveal that \stairshort{} consistently achieves volume reductions comparable to or slightly exceeding those of \locatellishort{} when measured over the polygon, and notably tighter relaxations when measured over the polytope. This is expected, since \stairshort{} exploits the full non-convex orthogonal polygon geometry, recovering facets of the exact three-dimensional convex hull that the convex polytope approximation cannot capture. The \indicatorshort{} configuration exhibits a markedly different behaviour: on instances where indicator or semi-continuous variable structures are present in the bilinear terms, \indicatorshort{} achieves very large volume reductions, often far surpassing both \locatellishort{} and \stairshort{}. However, on instances lacking such structure, the degenerate two-subproblem polygon provides almost no tightening beyond the bounding box, resulting in volume reductions near zero.

\definecolor{color1}{RGB}{25, 42, 86}
\definecolor{color2}{RGB}{156, 56, 72}
\definecolor{color3}{RGB}{0, 119, 139}

\begin{figure}
\begin{tikzpicture}
\begin{axis}[
    ybar,
    bar width=1.5cm,
    enlarge x limits=0.5,
    legend style={at={(0.5,-0.15)}, anchor=north, legend columns=-1},
    legend cell align=left,
    symbolic x coords={Polygon, Polytope},
    xtick=data,
    grid=major,
    grid style={line width=.1pt, draw=gray!30},
    ymin=0,
    width=\textwidth,
    height=8cm,
    nodes near coords,
    nodes near coords style={font=\small, /pgf/number format/.cd, fixed, precision=2}
]

% Configuration: indicator_locatelli
\addplot[fill=color1, draw=color1!80!black] coordinates {
    (Polygon, 0.06)
    (Polytope, 0.05)
};
\addlegendentry{\indicatorshort}

% Configuration: locatelli
\addplot[fill=color3, draw=color3!80!black] coordinates {
    (Polygon, 0.38)
    (Polytope, 0.30)
};
\addlegendentry{\locatellishort}

% Configuration: stair_locatelli
\addplot[fill=color2, draw=color2!80!black] coordinates {
    (Polygon, 0.38)
    (Polytope, 0.35)
};
\addlegendentry{\stairshort}

\end{axis}
\end{tikzpicture}
\caption{Shifted geometric mean of the domain volume reduction across all test instances, measured over the polygon and over the convex polytope. Higher values indicate a larger fraction of the McCormick bounding-box volume that is removed by the respective configuration.}
    \label{plot:volume-reduction-bar}
\end{figure}

Figure~\ref{plot:volume-reduction-bar} summarises these findings via the shifted geometric mean (SGM) of the volume reduction ratios across all instances. \stairshort{} attains an SGM of 0.38 over the polygon and 0.35 over the polytope, compared to 0.38 and 0.30 for \locatellishort{}. The improvement from \locatellishort{} to \stairshort{} is visible exclusively over the polytope, where the non-convex domain information translates most directly into additional valid cutting planes. \indicatorshort{} achieves SGM values of 0.06 and 0.05, reflecting the fact that its strong reductions on a subset of instances are averaged with negligible reductions on the remaining instances.

\subsubsection{Root Gap Reduction}

\definecolor{color0}{RGB}{205, 145, 50}
\definecolor{color1}{RGB}{25, 42, 86}
\definecolor{color2}{RGB}{156, 56, 72}
\definecolor{color3}{RGB}{0, 119, 139}

\begin{figure}[htbp]
\centering
\begin{minipage}[t]{0.49\textwidth}
    \centering
    \begin{tikzpicture}[baseline=(current bounding box.north)]
    \begin{axis}[
        xlabel={L},
        legend pos=north west,
        legend cell align=left,
        grid=both,
        grid style={line width=.1pt, draw=gray!20},
        major grid style={line width=.2pt, draw=gray!50},
        xmin=2.447406805925165e-07,
        xmax=1.1,
        ymin=2.447406805925165e-07,
        ymax=1.1,
        width=\linewidth,
        height=8cm
    ]

    % Diagonal reference line (y = x)
    \addplot[black, dashed, thick, domain=2.447406805925165e-07:1.1, forget plot] {x};

    % Configuration: indicator_locatelli
    \addplot[only marks, mark=square*, mark size=2pt, color=color1, fill=color1, fill opacity=0.7] coordinates {
                (-0.0, 0.0019064765002353243)
        (0.13171387193344078, 0.40037323587729756)
        (1.0, -0.0)
        (-0.0, -0.0)
        (-0.0, -0.0)
        (-0.0, -0.0)
        (-0.0, -0.0)
        (-0.0, -0.0)
        (-0.0, -0.0)
        (-0.0, 0.0010520547082670136)
        (-0.0, -0.0)
        (-0.0, -0.0)
        (-0.0, -0.0)
        (-0.0, -0.0)
        (0.002056541467256897, 0.0018374161720122887)
        (-0.0, 0.00010905727369345218)
        (-0.0, 5.4484249413369036e-05)
        (-0.0, -0.0)
        (0.8416144978484617, -0.0)
        (-0.0, -0.0)
        (-0.0, 0.00039587178330791485)
        (0.0005864297634535328, 1.3148649404824837e-05)
        (-0.0, -0.0)
        (0.028210101184006974, 0.0001410585434439632)
        (-0.0, -0.0)
        (-0.0, -0.0)
        (0.0474241919408453, -0.0)
        (0.7627103781065449, -0.0)
        (0.4220891340636073, -0.0)
        (-0.0, -0.0)
        (-0.0, -0.0)
        (0.0988972471480532, -0.0)
        (-0.0, -0.0)
        (-0.0, -0.0)
        (-0.0, -0.0)
        (-0.0, -0.0)
        (0.48224754869379394, -0.0)
        (0.8686153948324772, -0.0)
        (-0.0, -0.0)
        (0.7867896167396878, -0.0)
        (0.9594012515903654, -0.0)
        (0.6034371550557387, -0.0)
        (0.9689880413815426, -0.0)
        (-0.0, -0.0)
        (0.7713116053473111, -0.0)
        (0.8888883213281592, -0.0)
        (-0.0, -0.0)
        (0.8864299996077546, -0.0)
        (0.8166156337041521, -0.0)
        (-0.0, -0.0)
        (-0.0, -0.0)
        (-0.0, -0.0)
        (0.017042693779132725, 0.04535400922236471)
        (-0.0, -0.0)
        (-0.0, -0.0)
        (0.0038466650170672244, -0.0)
        (-0.0, 0.04038653868821847)
        (0.22467723749193294, 0.22467723749193294)
        (-0.0, 0.014561285990893505)
        (-0.0, 0.022926854143070934)
        (-0.0, 0.015081240157886951)
        (-0.0, -0.0)
        (-0.0, 0.005821384402829451)
        (-0.0, 2.5796142447009813e-05)
        (-0.0, -0.0)
        (-0.0, -0.0)
        (-0.0, -0.0)
        (-0.0, -0.0)
        (-0.0, -0.0)
        (0.001261974896337777, -0.0)
        (-0.0, -0.0)
        (-0.0, -0.0)
        (-0.0, -0.0)
        (-0.0, -0.0)
        (-0.0, -0.0)
        (-0.0, -0.0)
        (-0.0, -0.0)
        (-0.0, -0.0)
        (-0.0, -0.0)
        (-0.0, -0.0)
        (-0.0, -0.0)
        (-0.0, -0.0)
        (-0.0, -0.0)
        (-0.0, -0.0)
        (1.0, 1.0)
        (0.0, 0.0)
        (0.00026139378002320614, -0.0)
        (-0.0, -0.0)
        (-0.0, -0.0)
        (-0.0, -0.0)
        (0.6022631996071861, 0.6022631996071861)
        (-0.0, -0.0)
        (-0.0, -0.0)
        (0.00862479711238096, 1.4442058124034738e-05)
        (-0.0, -0.0)
        (0.13171387193344078, 0.40037323587729756)
        (0.15, -0.0)
        (0.4876101907602499, -0.0)
        (0.697368, -0.0)
        (-0.0048793817141411676, -0.0)
        (-0.0, -0.0)
        (-0.0, 0.00048195333053382587)
        (-0.0, 0.0006751584251543977)
        (-0.0, 0.00047657142857125174)
        (-0.0, 0.0003030000000000162)
        (-0.0, -0.0)
        (-0.0, -0.0)
        (-0.0, -0.0)
        (-0.0, -0.0)
        (-0.0, -0.0)
        (-0.0, -0.0)
        (-0.0, -0.0)
        (-4.229831747218066e-06, -4.229831747218066e-06)
        (-0.0, -0.0)
        (-0.0, -0.0)
        (-0.0, -0.0)
};
\addlegendentry{\indicatorshort}

% Configuration: stair_locatelli
\addplot[only marks, mark=triangle*, mark size=2pt, color=color2, fill=color2, fill opacity=0.7] coordinates {
        (-0.0, 0.029966498664616998)
        (0.13171387193344078, 0.8792013335406863)
        (1.0, 1.0)
        (-0.0, -0.0)
        (-0.0, -0.0)
        (-0.0, -0.0)
        (-0.0, -0.0)
        (-0.0, -0.0)
        (-0.0, -0.0)
        (-0.0, 0.5357687231978266)
        (-0.0, -0.0)
        (-0.0, 0.5)
        (-0.0, 0.26870500251073864)
        (-0.0, -0.0)
        (0.002056541467256897, 0.015218344295161348)
        (-0.0, 0.9188082449326268)
        (-0.0, 0.9351954148399652)
        (-0.0, 0.9117556692995678)
        (0.8416144978484617, 0.8416144978484617)
        (-0.0, 0.005030789113886598)
        (-0.0, 0.7684071867564513)
        (0.0005864297634535328, 0.8899579198275073)
        (-0.0, 0.9349922978793211)
        (0.028210101184006974, 0.9307767882015795)
        (-0.0, -0.0)
        (-0.0, -0.0)
        (0.0474241919408453, 0.2621910171178527)
        (0.7627103781065449, 0.8200932621069934)
        (0.4220891340636073, 0.8687540386095314)
        (-0.0, -0.0)
        (-0.0, 0.9328173569300326)
        (0.0988972471480532, 0.37482834325663117)
        (-0.0, 0.5092322003904899)
        (-0.0, 0.9996033019677392)
        (-0.0, 0.9708433621275754)
        (-0.0, 1.0000002326024833)
        (0.48224754869379394, 0.48224754869379394)
        (0.8686153948324772, 0.9593776593775393)
        (-0.0, 0.9416792529820702)
        (0.7867896167396878, 0.7867896167396878)
        (0.9594012515903654, 0.9594012515903654)
        (0.6034371550557387, 0.6034371550557387)
        (0.9689880413815426, 0.9689880413815426)
        (-0.0, 0.9805391754598236)
        (0.7713116053473111, 0.7713116053473111)
        (0.8888883213281592, 0.8888883213281592)
        (-0.0, -0.0)
        (0.8864299996077546, 0.9121588436318625)
        (0.8166156337041521, 0.8166156337041521)
        (-0.0, -0.0)
        (-0.0, -0.0)
        (-0.0, -0.0)
        (0.017042693779132725, 0.9276031305586045)
        (-0.0, -0.0)
        (-0.0, 0.18119751105979745)
        (0.0038466650170672244, 0.16295020477728445)
        (-0.0, 0.2280384456013255)
        (0.22467723749193294, 0.22467723749193294)
        (-0.0, 0.2753709984016611)
        (-0.0, 0.27405362705471664)
        (-0.0, 0.2481301829391243)
        (-0.0, 0.243965032986499)
        (-0.0, 0.22902684563758377)
        (-0.0, 0.0001085769837462018)
        (-0.0, 0.31919200025535377)
        (-0.0, 0.3308829201854134)
        (-0.0, 0.3068782329328105)
        (-0.0, 0.280991910477957)
        (-0.0, 0.019163678968002668)
        (0.001261974896337777, 0.0728944652662521)
        (-0.0, -0.0)
        (-0.0, 0.20697700000000055)
        (-0.0, 0.04545447726382972)
        (-0.0, 0.424812)
        (-0.0, 0.16666699999999993)
        (-0.0, 0.63721775)
        (-0.0, 0.6371855007746582)
        (-0.0, 0.25)
        (-0.0, 0.14285799997342938)
        (-0.0, 0.14285799999714396)
        (-0.0, 0.2500003749998124)
        (-0.0, -0.0)
        (-0.0, -0.0)
        (-0.0, -0.0)
        (1.0, 1.0)
        (0.0, 0.0)
        (0.00026139378002320614, 0.08279396642058519)
        (-0.0, -0.0)
        (-0.0, -0.0)
        (-0.0, 0.42481100000000027)
        (0.6022631996071861, 0.7145878134311632)
        (-0.0, 0.011224489796802708)
        (-0.0, 0.953488799999237)
        (0.00862479711238096, 0.4105906009117081)
        (-0.0, -0.0)
        (0.13171387193344078, 0.8792013335406863)
        (0.15, 0.453571)
        (0.4876101907602499, 0.427049630209282)
        (0.697368, 0.697368)
        (-0.0048793817141411676, 0.01103393661923336)
        (-0.0, 0.7266099700028615)
        (-0.0, 0.4069473718423276)
        (-0.0, 0.6696272728341133)
        (-0.0, -0.0)
        (-0.0, -0.0)
        (-0.0, -0.0)
        (-0.0, -0.0)
        (-0.0, -0.0)
        (-0.0, -0.0)
        (-0.0, -0.0)
        (-0.0, -0.0)
        (-0.0, -0.0)
        (-4.229831747218066e-06, 0.22089238840783862)
        (-0.0, 0.14969205390484477)
        (-0.0, 0.15677981247309375)
        (-0.0, 0.22641716581656882)
    };
    \addlegendentry{\stairshort}

    \end{axis}
    \end{tikzpicture}
    \subcaption{Scatter plot comparing the root gap reduction of \indicatorshort\ and \stairshort\ against \locatellishort\ for each instance; points above the diagonal indicate superior performance of the respective configuration.}
    \label{plot:root-gap-reduction}
\end{minipage}%
\hfill
\begin{minipage}[t]{0.49\textwidth}
    \centering
    \begin{tikzpicture}[baseline=(current bounding box.north)]
    \begin{axis}[
        ybar,
        bar width=1.42cm,
        enlarge x limits=0.5,
        symbolic x coords={\indicatorshort, \locatellishort, \stairshort},
        xtick={\indicatorshort, \locatellishort, \stairshort},
        grid=major,
        grid style={line width=.1pt, draw=gray!30},
        ymin=0,
        width=\linewidth,
        height=8cm,
        tick align=inside,
        nodes near coords,
        nodes near coords style={font=\small, anchor=south, /pgf/number format/fixed},
        yticklabel style={/pgf/number format/fixed}
    ]

    \addplot[fill=color1, draw=color1!80!black, bar shift=0pt] coordinates {(\indicatorshort, 0.01)};
    \addplot[fill=color3, draw=color3!80!black, bar shift=0pt] coordinates {(\locatellishort, 0.09)};
    \addplot[fill=color2, draw=color2!80!black, bar shift=0pt] coordinates {(\stairshort, 0.29)};

    \end{axis}
    \end{tikzpicture}
    
        \vspace{5.25mm} 
    \subcaption{Shifted geometric mean of the root gap reduction for each configuration.}
    \label{plot:root-gap-reduction-bar}
\end{minipage}
\caption{
Root gap reduction relative to the nonlinear baseline (\baseshort). Compared 116 instances where at least one bilinear term has non-empty domain and the root gap for \baseshort\ is not 0.}
\end{figure}

Figure~\ref{plot:root-gap-reduction} displays the root gap reduction of \indicatorshort{} and \stairshort{} relative to the \locatellishort{} configuration for each instance, and Figure~\ref{plot:root-gap-reduction-bar} reports the corresponding shifted geometric means. The root gap reduction is defined as the fraction of the initial relaxation gap that is closed at the root node by the addition of the respective cuts before branching begins.

The scatter plot in Figure~\ref{plot:root-gap-reduction} demonstrates that \stairshort{} consistently outperforms \locatellishort{} across the majority of instances. Nearly all \stairshort{} points lie above the diagonal, confirming that the additional facets recovered by the orthogonal polygon convexification translate into meaningfully stronger root relaxations. The \indicatorshort{} configuration produces mixed results: on instances with exploitable indicator structure, \indicatorshort{} achieves root gap reductions competitive with or exceeding \locatellishort{}, but on other instances it provides little to no improvement, with several points clustered near zero on the vertical axis.

The shifted geometric means in Figure~\ref{plot:root-gap-reduction-bar} quantify the aggregate effect. \stairshort{} achieves an SGM root gap reduction of 0.29, more than triple the value of 0.09 obtained by \locatellishort{}. \indicatorshort{} achieves an SGM of only 0.01, reflecting the heterogeneity of its performance across the test set. These results confirm that the theoretical tightness guarantees of the Ortho-Locatelli convexification carry over to practical solver performance at the root node.

\subsubsection{Branch-and-Bound Nodes}

\definecolor{color0}{RGB}{205, 145, 50}
\definecolor{color1}{RGB}{25, 42, 86}
\definecolor{color2}{RGB}{156, 56, 72}
\definecolor{color3}{RGB}{0, 119, 139}

\begin{figure}[htbp]
\centering
\begin{minipage}[t]{0.49\textwidth}
    \centering
    \begin{tikzpicture}[baseline=(current bounding box.north)]
    \begin{axis}[
        xlabel={\baseshort},
        legend pos=north west,
        legend cell align=left,
        grid=both,
        grid style={line width=.1pt, draw=gray!20},
        major grid style={line width=.2pt, draw=gray!50},
        xmin=0.9,
        xmax=6883437.000000001,
        ymin=0.9,
        ymax=6883437.000000001,
        width=\linewidth,
        height=8cm,
        xmode=log,
        ymode=log
    ]

    % Diagonal reference line (y = x)
    \addplot[black, dashed, thick, domain=0.9:6883437.000000001, forget plot] {x};

    % Configuration: indicator_locatelli
    \addplot[only marks, mark=*, mark size=2pt, color=color1, fill=color1, fill opacity=0.7] coordinates {
        (5.0, 7.0)
        (100.0, 131.0)
        (111.0, 139.0)
        (1.0, 60.0)
        (1.0, 60.0)
        (1.0, 1.0)
        (1.0, 1.0)
        (2079.0, 1950.0)
        (39.0, 39.0)
        (3.0, 3.0)
        (2512771.0, 1873251.0)
        (10802.0, 9447.0)
        (67.0, 65.0)
        (18558.0, 17428.0)
        (1892.0, 1911.0)
        (3505.0, 3393.0)
        (393.0, 363.0)
        (547.0, 556.0)
        (6702.0, 6377.0)
        (18.0, 20.0)
        (321.0, 210.0)
        (55138.0, 52529.0)
        (19932.0, 26252.0)
        (3394322.0, 4462353.0)
        (20965.0, 21689.0)
        (69693.0, 119559.0)
        (17852.0, 17132.0)
        (6741.0, 6585.0)
        (11361.0, 8998.0)
        (2391.0, 1647.0)
        (87.0, 99.0)
        (57.0, 77.0)
        (79.0, 81.0)
        (1575.0, 1687.0)
        (201.0, 502.0)
        (13245.0, 12471.0)
        (157.0, 418.0)
        (213.0, 243.0)
        (47577.0, 71541.0)
        (2195.0, 3005.0)
        (890513.0, 368599.0)
        (667.0, 2213.0)
        (4171.0, 4155.0)
        (32916.0, 36397.0)
        (115575.0, 88276.0)
        (59487.0, 37471.0)
        (29026.0, 27240.0)
        (29.0, 29.0)
        (156.0, 137.0)
        (6.0, 1.0)
        (6464.0, 688.0)
        (881.0, 1158.0)
        (5390.0, 1696.0)
        (5261.0, 5822.0)
        (2947.0, 2969.0)
        (1992.0, 2554.0)
        (7.0, 5.0)
        (1739.0, 1395.0)
        (13555.0, 20850.0)
        (1414872.0, 1345228.0)
        (879.0, 445.0)
        (1022.0, 497.0)
        (204.0, 183.0)
        (1.0, 1.0)
        (1.0, 5.0)
        (5.0, 3.0)
        (5.0, 5.0)
        (5.0, 5.0)
        (67.0, 45.0)
        (61.0, 45.0)
        (25.0, 25.0)
        (23.0, 23.0)
        (21.0, 33.0)
        (3.0, 1.0)
        (25.0, 25.0)
        (33.0, 36.0)
        (187.0, 193.0)
        (517.0, 109.0)
        (607.0, 601.0)
        (924.0, 845.0)
        (246504.0, 195984.0)
        (6130.0, 5976.0)
        (4568.0, 4712.0)
        (2301.0, 6298.0)
        (25.0, 8.0)
};
\addlegendentry{\indicatorshort}

% Configuration: locatelli
\addplot[only marks, mark=square*, mark size=2pt, color=color1, fill=color1, fill opacity=0.7] coordinates {
                (5.0, 7.0)
        (100.0, 131.0)
        (111.0, 139.0)
        (1.0, 61.0)
        (1.0, 61.0)
        (1.0, 27.0)
        (1.0, 27.0)
        (2079.0, 1993.0)
        (39.0, 39.0)
        (3.0, 3.0)
        (2512771.0, 4716458.0)
        (10802.0, 10571.0)
        (67.0, 65.0)
        (18558.0, 19769.0)
        (1892.0, 873.0)
        (3505.0, 3249.0)
        (393.0, 357.0)
        (547.0, 541.0)
        (6702.0, 6100.0)
        (18.0, 43.0)
        (321.0, 263.0)
        (55138.0, 65826.0)
        (19932.0, 50837.0)
        (3394322.0, 5046806.0)
        (20965.0, 14496.0)
        (69693.0, 67323.0)
        (17852.0, 18179.0)
        (6741.0, 4693.0)
        (11361.0, 12622.0)
        (2391.0, 1879.0)
        (87.0, 79.0)
        (57.0, 79.0)
        (79.0, 65.0)
        (1575.0, 1391.0)
        (201.0, 137.0)
        (13245.0, 14193.0)
        (157.0, 54.0)
        (213.0, 221.0)
        (47577.0, 23990.0)
        (2195.0, 2711.0)
        (890513.0, 1020014.0)
        (667.0, 2871.0)
        (4171.0, 5283.0)
        (32916.0, 51348.0)
        (115575.0, 106569.0)
        (59487.0, 53413.0)
        (29026.0, 18649.0)
        (29.0, 17.0)
        (156.0, 224.0)
        (6.0, 1.0)
        (6464.0, 4485.0)
        (881.0, 918.0)
        (5390.0, 1273.0)
        (5261.0, 4337.0)
        (2947.0, 3610.0)
        (1992.0, 1551.0)
        (7.0, 5.0)
        (1739.0, 1547.0)
        (13555.0, 14080.0)
        (1414872.0, 1387402.0)
        (879.0, 577.0)
        (1022.0, 487.0)
        (204.0, 189.0)
        (1.0, 635.0)
        (1.0, 5.0)
        (5.0, 3.0)
        (5.0, 5.0)
        (5.0, 5.0)
        (67.0, 53.0)
        (61.0, 53.0)
        (25.0, 19.0)
        (23.0, 17.0)
        (21.0, 29.0)
        (3.0, 1.0)
        (25.0, 21.0)
        (33.0, 31.0)
        (187.0, 205.0)
        (517.0, 109.0)
        (607.0, 601.0)
        (924.0, 219.0)
        (246504.0, 83469.0)
        (6130.0, 5312.0)
        (4568.0, 2641.0)
        (2301.0, 6098.0)
        (25.0, 9.0)
};
\addlegendentry{\locatellishort}

% Configuration: stair_locatelli
\addplot[only marks, mark=triangle*, mark size=2pt, color=color2, fill=color2, fill opacity=0.7] coordinates {
        (5.0, 5.0)
        (100.0, 65.0)
        (111.0, 34.0)
        (1.0, 1.0)
        (1.0, 1.0)
        (1.0, 1.0)
        (1.0, 1.0)
        (2079.0, 2005.0)
        (39.0, 33.0)
        (3.0, 3.0)
        (2512771.0, 3285338.0)
        (10802.0, 10441.0)
        (67.0, 1.0)
        (18558.0, 18228.0)
        (1892.0, 1034.0)
        (3505.0, 2988.0)
        (393.0, 589.0)
        (547.0, 595.0)
        (6702.0, 5471.0)
        (18.0, 288.0)
        (321.0, 346.0)
        (55138.0, 67121.0)
        (19932.0, 20978.0)
        (3394322.0, 5841080.0)
        (20965.0, 7281.0)
        (69693.0, 32204.0)
        (17852.0, 18786.0)
        (6741.0, 5696.0)
        (11361.0, 13482.0)
        (2391.0, 405.0)
        (87.0, 71.0)
        (57.0, 61.0)
        (79.0, 57.0)
        (1575.0, 1013.0)
        (201.0, 199.0)
        (13245.0, 19197.0)
        (157.0, 40.0)
        (213.0, 69.0)
        (47577.0, 21256.0)
        (2195.0, 1990.0)
        (890513.0, 892913.0)
        (667.0, 2152.0)
        (4171.0, 6648.0)
        (32916.0, 36598.0)
        (115575.0, 155926.0)
        (59487.0, 45020.0)
        (29026.0, 30522.0)
        (29.0, 11.0)
        (156.0, 199.0)
        (6.0, 1.0)
        (6464.0, 629.0)
        (881.0, 144.0)
        (5390.0, 1413.0)
        (5261.0, 4649.0)
        (2947.0, 2576.0)
        (1992.0, 1376.0)
        (7.0, 7.0)
        (1739.0, 1537.0)
        (13555.0, 20058.0)
        (1414872.0, 1581330.0)
        (879.0, 371.0)
        (1022.0, 1.0)
        (204.0, 160.0)
        (1.0, 117.0)
        (1.0, 1.0)
        (5.0, 7.0)
        (5.0, 3.0)
        (5.0, 5.0)
        (67.0, 53.0)
        (61.0, 58.0)
        (25.0, 29.0)
        (23.0, 21.0)
        (21.0, 28.0)
        (3.0, 1.0)
        (25.0, 25.0)
        (33.0, 35.0)
        (187.0, 205.0)
        (517.0, 109.0)
        (607.0, 601.0)
        (924.0, 219.0)
        (246504.0, 233859.0)
        (6130.0, 5828.0)
        (4568.0, 2151.0)
        (2301.0, 4571.0)
        (25.0, 9.0)
    };
    \addlegendentry{\stairshort}

    \end{axis}
    \end{tikzpicture}
    \subcaption{Scatter plot on a logarithmic scale comparing each configuration against \baseshort; points below the diagonal indicate fewer nodes than the baseline.}
\label{plot:nr-nodes}
\end{minipage}%
\hfill
\begin{minipage}[t]{0.49\textwidth}
    \centering
    \begin{tikzpicture}[baseline=(current bounding box.north)]
    \begin{axis}[
        ybar,
        bar width=1cm,
        enlarge x limits=0.4,
        symbolic x coords={\baseshort, \indicatorshort, \locatellishort, \stairshort},
        xtick={\baseshort, \indicatorshort, \locatellishort, \stairshort},
        grid=major,
        grid style={line width=.1pt, draw=gray!30},
        ymin=0,
        width=\linewidth,
        height=8cm,
        nodes near coords,
        tick align=inside,
        nodes near coords style={font=\small, anchor=south, /pgf/number format/fixed},
        yticklabel style={/pgf/number format/fixed}
    ]

    \addplot[fill=color0, draw=color0!80!black, bar shift=0pt] coordinates {(\baseshort, 375)};
    \addplot[fill=color1, draw=color1!80!black, bar shift=0pt] coordinates {(\indicatorshort, 379)};
    \addplot[fill=color3, draw=color3!80!black, bar shift=0pt] coordinates {(\locatellishort, 376)};
    \addplot[fill=color2, draw=color2!80!black, bar shift=0pt] coordinates {(\stairshort, 312)};

    \end{axis}
    \end{tikzpicture}
        \vspace{5.5mm} 
        \subcaption{Shifted geometric mean of the node count across all instances.}
\label{plot:nr-nodes-bar}
\end{minipage}
\caption{Number of branch-and-bound nodes explored. Compared 85 instances where at least one bilinear term has non-empty domain and at least one configuration was not solved at the root node.}
\end{figure}

Figure~\ref{plot:nr-nodes} shows the number of branch-and-bound nodes explored by each configuration plotted against the nonlinear baseline (\baseshort{}) on a logarithmic scale. Figure~\ref{plot:nr-nodes-bar} reports the shifted geometric means.

The scatter plot does not reveal a uniformly dominant configuration. For some instances, the baseline solves the problem at the root node while \locatellishort{} or \stairshort{} require additional branching, whereas for other instances the converse holds. However, two patterns emerge upon closer inspection. First, the cases where \baseshort{} solves at the root but a cut-augmented configuration does not occur more frequently for \locatellishort{} than for \stairshort{}, suggesting that the stronger \stairshort{} relaxation more reliably preserves the root-solvability of easy instances. Second, for instances requiring substantial enumeration (those with node counts in the thousands or higher), \stairshort{} and \indicatorshort{} tend to explore fewer nodes than \baseshort{}.

These observations are reflected in the shifted geometric means: \stairshort{} requires 312 nodes at shifted geometric mean, a reduction of approximately 17\% compared to the 375 nodes required by \baseshort{}. The \locatellishort{} configuration requires 376 nodes, essentially matching \baseshort{}, indicating that its cuts do not consistently provide sufficient bound improvement to reduce the overall enumeration effort. \indicatorshort{} requires 379 nodes, likewise equivalent to \baseshort{} in aggregate.

\subsubsection{Solving Time}

\definecolor{color0}{RGB}{205, 145, 50}
\definecolor{color1}{RGB}{25, 42, 86}
\definecolor{color2}{RGB}{156, 56, 72}
\definecolor{color3}{RGB}{0, 119, 139}

\begin{figure}[htbp]
\centering
\begin{minipage}[t]{0.49\textwidth}
    \centering
    \begin{tikzpicture}[baseline=(current bounding box.north)]
    \begin{axis}[
        xlabel={\baseshort},
        legend pos=north west,
        legend cell align=left,
        grid=both,
        grid style={line width=.1pt, draw=gray!20},
        major grid style={line width=.2pt, draw=gray!50},
        xmin=0.009000000000000001,
        xmax=3960.0000000000005,
        ymin=0.009000000000000001,
        ymax=3960.0000000000005,
        width=\linewidth,
        height=8cm,
        xmode=log,
        ymode=log
    ]

    % Diagonal reference line (y = x)
    \addplot[black, dashed, thick, domain=0.009000000000000001:3960.0000000000005, forget plot] {x};

    % Configuration: indicator_locatelli
    \addplot[only marks, mark=*, mark size=2pt, color=color1, fill=color1, fill opacity=0.7] coordinates {
        (0.03, 0.03)
        (3600.0, 3600)
        (3600, 3600.0)
        (3600.0, 3600.0)
        (3600.0, 3600)
        (0.02, 0.03)
        (0.0, 0.0)
        (0.0, 0.0)
        (0.04, 0.05)
        (0.05, 0.06)
        (0.03, 0.04)
        (0.02, 0.06)
        (0.03, 0.04)
        (0.03, 0.04)
        (0.01, 0.01)
        (0.01, 0.01)
        (0.01, 0.0)
        (0.01, 0.01)
        (0.0, 0.0)
        (0.02, 0.0)
        (0.02, 0.02)
        (0.01, 0.01)
        (0.02, 0.02)
        (0.0, 0.02)
        (0.0, 0.02)
        (0.0, 0.02)
        (0.18, 0.36)
        (0.01, 0.04)
        (0.03, 0.02)
        (0.01, 0.01)
        (0.16, 0.48)
        (0.02, 0.04)
        (973.2, 963.86)
        (0.68, 0.86)
        (0.03, 0.05)
        (1.77, 3.26)
        (0.01, 0.0)
        (0.01, 0.01)
        (0.0, 0.02)
        (0.0, 0.02)
        (0.0, 0.01)
        (0.05, 0.09)
        (0.62, 0.75)
        (3600.0, 3600.0)
        (0.42, 0.29)
        (0.13, 0.16)
        (0.09, 0.12)
        (0.89, 0.97)
        (0.17, 0.33)
        (0.02, 0.02)
        (0.01, 0.01)
        (0.01, 0.01)
        (0.01, 0.01)
        (0.01, 0.04)
        (0.05, 0.09)
        (0.08, 0.17)
        (7.56, 20.93)
        (3.15, 5.65)
        (843.18, 1471.04)
        (3.39, 12.49)
        (30.15, 40.91)
        (0.01, 0.02)
        (3.85, 2.84)
        (1.59, 1.11)
        (3600.0, 3600.0)
        (0.01, 0.11)
        (4.65, 1.41)
        (0.42, 0.47)
        (0.09, 0.21)
        (0.04, 0.05)
        (0.23, 0.76)
        (0.44, 0.54)
        (0.48, 1.07)
        (3.22, 2.52)
        (0.96, 28.09)
        (0.58, 0.58)
        (7.78, 13.82)
        (0.81, 0.91)
        (323.23, 159.0)
        (0.11, 0.71)
        (1.85, 0.83)
        (4.66, 7.18)
        (37.14, 31.09)
        (3600.0, 3600.0)
        (20.35, 13.83)
        (10.35, 11.57)
        (0.01, 0.01)
        (0.01, 0.01)
        (0.01, 0.02)
        (0.03, 0.05)
        (0.08, 0.06)
        (0.02, 0.01)
        (16.23, 0.24)
        (0.18, 0.19)
        (1.08, 0.58)
        (1.11, 1.44)
        (5.1, 5.64)
        (3600.0, 3600.0)
        (3600.0, 3600)
        (0.36, 0.59)
        (0.0, 0.0)
        (0.02, 0.15)
        (0.76, 0.66)
        (21.8, 5.37)
        (507.65, 533.23)
        (3600, 3600.0)
        (3600.0, 3600)
        (0.09, 0.1)
        (0.33, 0.19)
        (0.07, 0.15)
        (0.01, 0.05)
        (0.01, 0.07)
        (0.29, 0.46)
        (0.01, 0.01)
        (0.0, 0.01)
        (0.01, 0.01)
        (0.02, 0.03)
        (0.01, 0.02)
        (0.01, 0.01)
        (0.01, 0.01)
        (0.01, 0.01)
        (0.17, 0.15)
        (0.3, 0.31)
        (0.04, 0.43)
        (3600.0, 3600.0)
        (0.0, 0.0)
        (0.0, 0.0)
        (3600.0, 3600)
        (3600.0, 3600)
        (3600, 3600.0)
        (0.06, 0.09)
        (3600.0, 3600)
        (0.01, 0.01)
        (0.01, 0.01)
        (0.01, 0.01)
        (0.0, 0.0)
        (0.0, 0.0)
        (0.01, 0.01)
        (0.01, 0.0)
        (0.01, 0.0)
        (0.02, 0.02)
        (0.03, 0.04)
        (0.01, 0.03)
        (0.01, 0.02)
        (0.0, 0.01)
        (0.0, 0.0)
        (0.01, 0.0)
        (0.01, 0.01)
        (0.0, 0.0)
        (0.0, 0.0)
        (0.01, 0.01)
        (0.02, 0.02)
        (0.0, 0.0)
        (0.04, 0.09)
        (0.03, 0.06)
        (0.01, 0.04)
        (0.03, 0.03)
        (0.01, 0.04)
        (0.02, 0.25)
        (0.05, 0.25)
        (0.0, 0.04)
        (0.0, 0.0)
        (0.08, 0.09)
        (0.12, 0.18)
        (0.27, 0.22)
        (199.4, 183.41)
        (0.01, 0.01)
        (0.01, 0.01)
        (3.17, 3.78)
        (3600.0, 3600)
        (3600, 3600)
        (0.05, 0.07)
        (0.03, 0.05)
        (2.31, 4.51)
        (0.63, 2.75)
        (0.09, 0.19)
        (0.0, 0.0)
};
\addlegendentry{\indicatorshort}

% Configuration: locatelli
\addplot[only marks, mark=square*, mark size=2pt, color=color1, fill=color1, fill opacity=0.7] coordinates {
        (0.03, 0.03)
        (3600.0, 3600)
        (3600, 3600.0)
        (3600.0, 3600.0)
        (3600.0, 3600)
        (0.02, 0.03)
        (0.0, 0.0)
        (0.0, 0.0)
        (0.04, 0.06)
        (0.05, 0.06)
        (0.03, 0.04)
        (0.02, 0.04)
        (0.03, 0.05)
        (0.03, 0.04)
        (0.01, 0.01)
        (0.01, 0.01)
        (0.01, 0.0)
        (0.01, 0.01)
        (0.0, 0.01)
        (0.02, 0.02)
        (0.02, 0.02)
        (0.01, 0.26)
        (0.02, 0.07)
        (0.0, 0.03)
        (0.0, 0.04)
        (0.0, 0.03)
        (0.18, 0.31)
        (0.01, 0.01)
        (0.03, 0.02)
        (0.01, 0.08)
        (0.16, 0.18)
        (0.02, 0.01)
        (973.2, 2426.45)
        (0.68, 6.89)
        (0.03, 0.14)
        (1.77, 3.39)
        (0.01, 0.03)
        (0.01, 0.05)
        (0.0, 0.02)
        (0.0, 0.0)
        (0.0, 0.01)
        (0.05, 0.05)
        (0.62, 0.3)
        (3600.0, 3600.0)
        (0.42, 0.5)
        (0.13, 0.45)
        (0.09, 0.16)
        (0.89, 0.99)
        (0.17, 0.33)
        (0.02, 0.02)
        (0.01, 0.01)
        (0.01, 0.01)
        (0.01, 0.01)
        (0.01, 0.04)
        (0.05, 0.12)
        (0.08, 0.22)
        (7.56, 35.52)
        (3.15, 39.96)
        (843.18, 1533.05)
        (3.39, 4.58)
        (30.15, 32.1)
        (0.01, 0.09)
        (3.85, 3.59)
        (1.59, 0.95)
        (3600.0, 3600.0)
        (0.01, 0.15)
        (4.65, 7.1)
        (0.42, 1.8)
        (0.09, 0.19)
        (0.04, 0.14)
        (0.23, 0.45)
        (0.44, 0.58)
        (0.48, 0.26)
        (3.22, 3.61)
        (0.96, 0.39)
        (0.58, 0.33)
        (7.78, 15.96)
        (0.81, 1.85)
        (323.23, 481.52)
        (0.11, 0.85)
        (1.85, 0.74)
        (4.66, 12.08)
        (37.14, 31.37)
        (3600.0, 3600.0)
        (20.35, 36.02)
        (10.35, 15.31)
        (0.01, 0.01)
        (0.01, 0.01)
        (0.01, 0.02)
        (0.03, 0.03)
        (0.08, 0.08)
        (0.02, 0.01)
        (16.23, 0.86)
        (0.18, 0.18)
        (1.08, 0.59)
        (1.11, 1.37)
        (5.1, 4.56)
        (3600.0, 3600.0)
        (3600.0, 3600.0)
        (0.36, 0.35)
        (0.0, 0.0)
        (0.02, 0.2)
        (0.76, 0.56)
        (21.8, 4.01)
        (507.65, 599.53)
        (3600, 3600.0)
        (3600.0, 3600.0)
        (0.09, 0.12)
        (0.33, 0.26)
        (0.07, 0.16)
        (0.01, 0.06)
        (0.01, 0.13)
        (0.29, 3.25)
        (0.01, 0.01)
        (0.0, 0.01)
        (0.01, 0.02)
        (0.02, 0.29)
        (0.01, 0.02)
        (0.01, 0.01)
        (0.01, 0.05)
        (0.01, 0.01)
        (0.17, 0.05)
        (0.3, 0.12)
        (0.04, 0.27)
        (3600.0, 186.24)
        (0.0, 0.01)
        (0.0, 0.01)
        (3600.0, 3600.0)
        (3600.0, 3600.0)
        (3600, 3600.0)
        (0.06, 0.2)
        (3600.0, 3600.0)
        (0.01, 0.01)
        (0.01, 0.01)
        (0.01, 0.0)
        (0.0, 0.0)
        (0.0, 0.0)
        (0.01, 0.01)
        (0.01, 0.01)
        (0.01, 0.0)
        (0.02, 0.01)
        (0.03, 0.04)
        (0.01, 0.04)
        (0.01, 0.02)
        (0.0, 0.0)
        (0.0, 0.0)
        (0.01, 0.0)
        (0.01, 0.01)
        (0.0, 0.0)
        (0.0, 0.0)
        (0.01, 0.01)
        (0.02, 0.02)
        (0.0, 0.0)
        (0.04, 0.1)
        (0.03, 0.05)
        (0.01, 0.04)
        (0.03, 0.07)
        (0.01, 0.04)
        (0.02, 0.07)
        (0.05, 0.15)
        (0.0, 0.0)
        (0.0, 0.0)
        (0.08, 0.08)
        (0.12, 0.18)
        (0.27, 0.16)
        (199.4, 40.69)
        (0.01, 0.01)
        (0.01, 0.01)
        (3.17, 3.83)
        (3600.0, 3600)
        (3600, 3600)
        (0.05, 0.07)
        (0.03, 0.09)
        (2.31, 2.27)
        (0.63, 2.78)
        (0.09, 0.38)
        (0.0, 0.0)
};
\addlegendentry{\locatellishort}

% Configuration: stair_locatelli
\addplot[only marks, mark=triangle*, mark size=2pt, color=color2, fill=color2, fill opacity=0.7] coordinates {
        (0.03, 0.04)
        (3600.0, 3600.0)
        (3600, 3600)
        (3600.0, 3600.0)
        (3600.0, 3600.0)
        (0.02, 0.03)
        (0.0, 0.0)
        (0.0, 0.0)
        (0.04, 0.05)
        (0.05, 0.06)
        (0.03, 0.03)
        (0.02, 0.04)
        (0.03, 0.05)
        (0.03, 0.04)
        (0.01, 0.04)
        (0.01, 0.03)
        (0.01, 0.0)
        (0.01, 0.01)
        (0.0, 0.08)
        (0.02, 0.27)
        (0.02, 0.18)
        (0.01, 0.32)
        (0.02, 0.12)
        (0.0, 0.5)
        (0.0, 1.58)
        (0.0, 2.98)
        (0.18, 0.37)
        (0.01, 0.01)
        (0.03, 0.07)
        (0.01, 0.06)
        (0.16, 0.25)
        (0.02, 0.03)
        (973.2, 2096.14)
        (0.68, 3.2)
        (0.03, 0.25)
        (1.77, 3.46)
        (0.01, 0.1)
        (0.01, 0.13)
        (0.0, 0.12)
        (0.0, 0.2)
        (0.0, 0.01)
        (0.05, 0.17)
        (0.62, 0.33)
        (3600.0, 3600.0)
        (0.42, 1.64)
        (0.13, 1.22)
        (0.09, 3.09)
        (0.89, 11.4)
        (0.17, 0.82)
        (0.02, 0.02)
        (0.01, 0.01)
        (0.01, 0.01)
        (0.01, 0.01)
        (0.01, 0.06)
        (0.05, 0.14)
        (0.08, 0.4)
        (7.56, 32.78)
        (3.15, 14.46)
        (843.18, 2247.04)
        (3.39, 2.99)
        (30.15, 17.38)
        (0.01, 0.16)
        (3.85, 9.65)
        (1.59, 1.96)
        (3600.0, 3600.0)
        (0.01, 0.08)
        (4.65, 14.52)
        (0.42, 1.54)
        (0.09, 0.19)
        (0.04, 0.08)
        (0.23, 0.16)
        (0.44, 0.47)
        (0.48, 0.71)
        (3.22, 4.73)
        (0.96, 0.48)
        (0.58, 0.67)
        (7.78, 6.88)
        (0.81, 1.66)
        (323.23, 474.25)
        (0.11, 1.15)
        (1.85, 1.36)
        (4.66, 10.04)
        (37.14, 80.71)
        (3600.0, 3600)
        (20.35, 23.52)
        (10.35, 18.79)
        (0.01, 0.01)
        (0.01, 0.01)
        (0.01, 0.02)
        (0.03, 0.06)
        (0.08, 0.97)
        (0.02, 0.01)
        (16.23, 0.51)
        (0.18, 0.21)
        (1.08, 0.82)
        (1.11, 7.86)
        (5.1, 6.55)
        (3600.0, 3600.0)
        (3600.0, 3600.0)
        (0.36, 0.35)
        (0.0, 0.0)
        (0.02, 0.33)
        (0.76, 0.69)
        (21.8, 19.36)
        (507.65, 779.52)
        (3600, 3600.0)
        (3600.0, 3600.0)
        (0.09, 0.14)
        (0.33, 0.48)
        (0.07, 0.18)
        (0.01, 0.06)
        (0.01, 0.08)
        (0.29, 4.45)
        (0.01, 0.07)
        (0.0, 0.06)
        (0.01, 0.04)
        (0.02, 0.29)
        (0.01, 0.04)
        (0.01, 0.05)
        (0.01, 0.08)
        (0.01, 0.07)
        (0.17, 0.17)
        (0.3, 0.64)
        (0.04, 0.19)
        (3600.0, 97.29)
        (0.0, 0.01)
        (0.0, 0.0)
        (3600.0, 3600)
        (3600.0, 3600.0)
        (3600, 3600.0)
        (0.06, 0.15)
        (3600.0, 3600.0)
        (0.01, 0.01)
        (0.01, 0.01)
        (0.01, 0.01)
        (0.0, 0.0)
        (0.0, 0.01)
        (0.01, 0.01)
        (0.01, 0.01)
        (0.01, 0.01)
        (0.02, 0.03)
        (0.03, 0.08)
        (0.01, 0.07)
        (0.01, 0.02)
        (0.0, 0.01)
        (0.0, 0.0)
        (0.01, 0.03)
        (0.01, 0.03)
        (0.0, 0.02)
        (0.0, 0.02)
        (0.01, 0.03)
        (0.02, 0.03)
        (0.0, 0.02)
        (0.04, 0.12)
        (0.03, 0.16)
        (0.01, 0.17)
        (0.03, 0.12)
        (0.01, 0.13)
        (0.02, 0.03)
        (0.05, 0.08)
        (0.0, 0.03)
        (0.0, 0.0)
        (0.08, 0.07)
        (0.12, 0.15)
        (0.27, 0.13)
        (199.4, 99.74)
        (0.01, 0.03)
        (0.01, 0.0)
        (3.17, 3.72)
        (3600.0, 3600)
        (3600, 3600)
        (0.05, 0.06)
        (0.03, 0.15)
        (2.31, 2.06)
        (0.63, 4.53)
        (0.09, 0.9)
        (0.0, 0.0)
    };
    \addlegendentry{\stairshort}

    \end{axis}
    \end{tikzpicture}
        \subcaption{Scatter plot on a logarithmic scale comparing each configuration against \baseshort; points below the diagonal indicate faster solving.}
\label{plot:nr-solving-time}
\end{minipage}%
\hfill
\begin{minipage}[t]{0.49\textwidth}
    \centering
    \begin{tikzpicture}[baseline=(current bounding box.north)]
    \begin{axis}[
        ybar,
        bar width=1cm,
        enlarge x limits=0.4,
        symbolic x coords={\baseshort, \indicatorshort, \locatellishort, \stairshort},
        xtick={\baseshort, \indicatorshort, \locatellishort, \stairshort},
        grid=major,
        grid style={line width=.1pt, draw=gray!30},
        ymin=0,
        width=\linewidth,
        height=8cm,
        tick align=inside,
        nodes near coords,
        nodes near coords style={font=\small, anchor=south, /pgf/number format/fixed},
        yticklabel style={/pgf/number format/fixed}
    ]

    \addplot[fill=color0, draw=color0!80!black, bar shift=0pt] coordinates {(\baseshort, 10.40)};
    \addplot[fill=color1, draw=color1!80!black, bar shift=0pt] coordinates {(\indicatorshort, 10.64)};
    \addplot[fill=color3, draw=color3!80!black, bar shift=0pt] coordinates {(\locatellishort, 10.85)};
    \addplot[fill=color2, draw=color2!80!black, bar shift=0pt] coordinates {(\stairshort, 11.18)};

    \end{axis}
    \end{tikzpicture}
        \vspace{5.5mm} 
            \subcaption{Shifted geometric mean of the solving time across all instances.}
\label{plot:nr-solving-time-bar}
\end{minipage}
\caption{Total solving time without preprocessing. Compared 177 instances where at least one bilinear term has non-empty domain. Instances that exceed the timelimit of 1 hour are counted as 3600.}
\end{figure}

Figure~\ref{plot:nr-solving-time} presents the total wall-clock solving time for each configuration against the baseline on a logarithmic scale, and Figure~\ref{plot:nr-solving-time-bar} reports the shifted geometric means.

No configuration emerges as a clear winner in terms of total solving time. The shifted geometric means are closely grouped: \baseshort{} at 10.40 seconds, \indicatorshort{} at 10.64 seconds, \locatellishort{} at 10.85 seconds, and \stairshort{} at 11.18 seconds. Inspection of the scatter plot suggests that easier instances (those solved in under one second by \baseshort{}) tend to be solved slightly faster without the overhead of domain projection and cut generation, while harder instances (those approaching the time limit) benefit more frequently from the tighter relaxations provided by \locatellishort{}, \stairshort{}, and \indicatorshort{}.

This outcome is not unexpected. The current implementation computes all valid cutting planes during preprocessing and adds them to the model as static constraints before the solver begins its branch-and-bound procedure. No cut separation is performed during the tree search, and no interaction with Gurobi's spatial branching strategy takes place. Consequently, the solver must process a potentially large number of additional constraints at every node, even when only a small subset would be active. The fact that this static approach does not significantly increase total solving time, despite the overhead, demonstrates the potential of the method. Integrating the cut generation into a callback-based separation framework that adds cuts dynamically during the branch-and-bound process, and coordinating it with the solver's native spatial branching decisions, constitutes a natural direction for future work that can be expected to translate the observed relaxation improvements into consistent solving time reductions.

\section{Conclusion}
We developed a KKT-based algorithm for computing the convex envelope of
the bilinear function $xy$ over possibly nonconvex polygonal domains.
An analysis of the contact-point geometry identified the additional
generator configurations that arise beyond the convex case and required
their explicit treatment. We also extended the polyhedral subdivision
result from polytopes to polygons, showing that the envelope retains a
piecewise affine, quadratic, or rational representation. For orthogonal
polygons, reductions of the generators and their combinations simplify
the computation substantially. We presented ways to obtain such domains
in nonlinear optimization and demonstrated the value of retaining their
nonconvex geometry. Experiments on MINLPLib instances showed that the
resulting cuts can strengthen root relaxations and reduce branching
effort, although the current implementation does not yield a consistent
improvement in solving time.

Beyond the directions discussed in \Cref{sec:computational_study}, several opportunities for future work remain. The extension to fractional
terms and other bivariate functions satisfying \Cref{assumption} would
broaden the scope of the approach. Further theoretical investigation of
two-dimensional projections could identify additional ways to construct
useful polygonal relaxations from the constraints of a MINLP. On the
computational side, efficient and parallelizable preprocessing routines
are needed to obtain these domains at low cost and make their associated
cuts practical for larger problems. Understanding the trade-off between
the effort spent constructing polygonal domains and the strength of the
resulting relaxations will be important for their effective use in
deterministic global optimization.

\sloppy %hyphenation in bibliography
\printbibliography

\clearpage
\appendix
\section{KKT-based algorithm}
\label{app:kkt_algorithm}

\begin{algorithm}[H]
\caption{Three generators (\textsc{ThreeGenerators})}
\label{alg:kkt_three_generators}
\small
\algrenewcommand{\algorithmicrequire}{\textbf{Input:}}
\algrenewcommand{\algorithmicensure}{\textbf{Output:}}
\begin{algorithmic}[1]
\Require A simple polygon $\polygon$, its generating set $G(\polygon)$, and a generator set $J$ with $|J|=3$
\Ensure A collection $\mathcal{C}$ of KKT regions and associated envelope functions
\State $\mathcal{C}\gets\varnothing$
\Statex \Comment{see \Cref{sec:kkt_three_vertices,sec:kkt_two_vertices_one_edge,sec:kkt_one_vertex_two_edges,sec:kkt_three_edges}}
\State Compute candidate triples $(a_s^\ast,b_s^\ast,c_s^\ast)$ from the $\eta$-equalities,
\Statex using the expressions of $\eta_e$ on $\operatorname{int}(D_e)$ for every $e\in J\cap E'(\polygon)$
\State Retain candidates satisfying $(a_s^\ast,b_s^\ast)\in\bigcap_{e\in J\cap E'(\polygon)}\operatorname{int}(D_e)$ and
\Statex $\eta_g(a_s^\ast,b_s^\ast)\leq\eta_r(a_s^\ast,b_s^\ast)\quad\forall g\in J,\ r\in G(\polygon)\setminus J$
\For{each feasible candidate $s$}
    \State $\Gamma_J^{(s)}\gets\polygon\cap\operatorname{conv}\{\mathbf{x}_g(a_s^\ast,b_s^\ast):g\in J\}$
    \State $\mathcal{C}\gets\mathcal{C}\cup\{(\Gamma_J^{(s)},a_s^\ast x+b_s^\ast y+c_s^\ast)\}$, $c_s^\ast=\eta_g(a_s^\ast,b_s^\ast)$ for any $g\in J$
\EndFor
\State \Return $\mathcal{C}$
\end{algorithmic}
\end{algorithm}

\begin{algorithm}[H]
\caption{One vertex -- one edge (\textsc{VertexEdge})}
\label{alg:kkt_vertex_edge}
\small
\algrenewcommand{\algorithmicrequire}{\textbf{Input:}}
\algrenewcommand{\algorithmicensure}{\textbf{Output:}}
\begin{algorithmic}[1]
\Require A simple polygon $\polygon$, its generating set $G(\polygon)$, and a generator set $J$ with $J=\{v,e\}$
\Ensure A collection $\mathcal{C}$ of KKT regions and associated envelope functions
\State $\mathcal{C}\gets\varnothing$
\State Compute $a^\ast(x,y)$, $b^\ast(x,y)$ and $c^\ast(x,y)$ as in \Cref{sec:kkt_one_vertex_one_edge}
\Statex $F_J(x,y)\gets a^\ast(x,y)x+b^\ast(x,y)y+c^\ast(x,y)$
\State $C\coloneqq4m_e(q_e+m_e v_x-v_y)$, $\beta_2\coloneqq-1/C$,
\Statex $\beta_1\coloneqq(2q_e+4m_e v_x)/C$, $\beta_0\coloneqq-(q_e^2+4m_e v_xv_y)/C$
\State $\mathcal{I}\gets\varnothing$
\For{each choice $\widehat D_{e'}\in\{D_{e'}^-,D_{e'},D_{e'}^+\}$, $e'\in E'(\polygon)\setminus J$}
    \State Substitute $b(z)=\beta_2z^2+\beta_1z+\beta_0$, $a(z)=z-m_e b(z)$ into
    \Statex \hspace{\dimexpr\algorithmicindent*1\relax}$(a(z),b(z))\in\operatorname{int}(D_e)\cap\bigcap_{e'\in E'(\polygon)\setminus J}\widehat D_{e'}$ and
    \Statex \hspace{\dimexpr\algorithmicindent*1\relax}$\eta_v(a(z),b(z))\leq\eta_g(a(z),b(z))\quad\forall g\in G(\polygon)\setminus J$
    \State Solve the resulting system and add its candidate intervals $I_s$ to $\mathcal{I}$
\EndFor
\State $d\coloneqq v_y-q_e-m_e v_x$, $N(x,y)\coloneqq2m_e\bigl(x(v_y-q_e)-v_x(y-q_e)\bigr)+q_e h(x,y)$,
\Statex where $h(x,y)\coloneqq m_e(x-v_x)+v_y-y$
\For{each candidate interval $I_s\in\mathcal{I}$ with endpoints $L_s,U_s$}
    \If{$d>0$}
        \State $\Gamma_J^{(s)}\gets\left\{(x,y)\in\polygon:\begin{aligned}
            &0<h(x,y)\leq d,\\
            &L_s h(x,y)\leq N(x,y)\leq U_s h(x,y)
        \end{aligned}\right\}$
    \Else
        \State $\Gamma_J^{(s)}\gets\left\{(x,y)\in\polygon:\begin{aligned}
            &d\leq h(x,y)<0,\\
            &U_s h(x,y)\leq N(x,y)\leq L_s h(x,y)
        \end{aligned}\right\}$
    \EndIf
    \State Make interval inequalities strict at excluded endpoints of $I_s$
    \State $\mathcal{C}\gets\mathcal{C}\cup\{(\Gamma_J^{(s)},F_J(x,y))\}$
\EndFor
\State \Return $\mathcal{C}$
\end{algorithmic}
\end{algorithm}

\begin{algorithm}[H]
\caption{Two edges (\textsc{TwoEdges})}
\label{alg:kkt_two_edges}
\small
\algrenewcommand{\algorithmicrequire}{\textbf{Input:}}
\algrenewcommand{\algorithmicensure}{\textbf{Output:}}
\begin{algorithmic}[1]
\Require A simple polygon $\polygon$, its generating set $G(\polygon)$, and a generator set $J$ with $J=\{e_1,e_2\}$
\Ensure A collection $\mathcal{C}$ of KKT regions and associated envelope functions
\State $\mathcal{C}\gets\varnothing$
\State Compute $\beta_{1,\tau}$ and $\beta_{0,\tau}$ as in \Cref{sec:kkt_two_edges} for all relations $\tau$
\State $\mathcal{I}\gets\varnothing$
\For{each relation $\tau$}
    \State Compute $a_\tau^\ast(x,y)$, $b_\tau^\ast(x,y)$ and $c_\tau^\ast(x,y)$ as in \Cref{sec:kkt_two_edges}
    \Statex \hspace{\dimexpr\algorithmicindent*1\relax}$F_J^{(\tau)}(x,y)\gets a_\tau^\ast(x,y)x+b_\tau^\ast(x,y)y+c_\tau^\ast(x,y)$
    \For{each choice $\widehat D_{e'}\in\{D_{e'}^-,D_{e'},D_{e'}^+\}$, $e'\in E'(\polygon)\setminus J$}
        \State Substitute $a_\tau(b)=\beta_{1,\tau}b+\beta_{0,\tau}$ into
        \Statex \hspace{\dimexpr\algorithmicindent*2\relax}$(a_\tau(b),b)\in\operatorname{int}(D_{e_1})\cap\operatorname{int}(D_{e_2})\cap\bigcap_{e'\in E'(\polygon)\setminus J}\widehat D_{e'}$ and
        \Statex \hspace{\dimexpr\algorithmicindent*2\relax}$\eta_{e_1}(a_\tau(b),b)\leq\eta_g(a_\tau(b),b)\quad\forall g\in G(\polygon)\setminus J$
        \State Solve the resulting system and add its candidate intervals $I_{\tau,s}$ to $\mathcal{I}$
    \EndFor
\EndFor
\For{each candidate interval $I_{\tau,s}\in\mathcal{I}$ with endpoints $L_{\tau,s},U_{\tau,s}$}
    \State $x_{e_i,\tau}(x,y)\coloneqq\dfrac{y+\beta_{1,\tau}x-q_{e_i}}{m_{e_i}+\beta_{1,\tau}}$, $i=1,2$, $\Delta_\tau(x,y)\coloneqq x_{e_2,\tau}(x,y)-x_{e_1,\tau}(x,y)$
    \State $\begin{aligned}[t]
        \Gamma_J^{(\tau,s)}\gets{}&\left\{(x,y)\in\polygon:\begin{aligned}
            &\Delta_\tau(x,y)>0,\quad 0\leq x-x_{e_1,\tau}(x,y)\leq\Delta_\tau(x,y),\\
            &L_{\tau,s}\leq b_\tau^\ast(x,y)\leq U_{\tau,s}
        \end{aligned}\right\}\\
        &\cup\left\{(x,y)\in\polygon:\begin{aligned}
            &\Delta_\tau(x,y)<0,\quad\Delta_\tau(x,y)\leq x-x_{e_1,\tau}(x,y)\leq0,\\
            &L_{\tau,s}\leq b_\tau^\ast(x,y)\leq U_{\tau,s}
        \end{aligned}\right\}
    \end{aligned}$
    \State Make interval inequalities strict at excluded endpoints of $I_{\tau,s}$
    \State $\mathcal{C}\gets\mathcal{C}\cup\{(\Gamma_J^{(\tau,s)},F_J^{(\tau)}(x,y))\}$
\EndFor
\State \Return $\mathcal{C}$
\end{algorithmic}
\end{algorithm}

\end{document}